\newtheorem{theorem}{Theorem}[section]
\newtheorem{lemma}[theorem]{Lemma}
\newtheorem{proposition}[theorem]{Proposition}
\newtheorem{corollary}[theorem]{Corollary}
\newtheorem{definition}[theorem]{Definition}
\newtheorem{example}[theorem]{Example}
\newtheorem{remark}[theorem]{Remark}
\begin{document}
\title{Fusion modules and amenability of coideals of compact and discrete quantum groups}
\author{Benjamin Anderson-Sackaney and Leonid Vainerman}
\date{July 2, 2023}
\maketitle
\begin{abstract}
    We give a definition of an amenable fusion module over a fusion algebra. A notion of relative integrability for the `coduals' of coideals of compact quantum groups was recently introduced in the joint work of de Commer and Dzokou Talla. We use this property to construct an analogue of the quasi-regular representation. Then, we characterize a certain coamenability property of quasi-regular representations with amenability of their associated fusion modules. Afterwards, we obtain a duality result that generalizes Tomatsu's theorem for this coamenability property and an amenability property of their `codual' coideals (under an additional assumption). As an example, we apply this result to show the fusion modules associated to certain non-standard Podle\'s spheres are amenable.
    
\end{abstract}
\begin{section} {Introduction}

In \cite{HI98} the authors introduced the notion of a {\it fusion algebra} $(R,I^R, N^w_{u,v}, d^R)$, where $R$ is
both a unital ring and a free $\mathbb Z$-module with basis $I^R$ and structural contants $N^{\alpha}_{\zeta,\eta}\in \mathbb Z^+ $ satisfying some natural conditions including the existence of so-called dimension function $d^R$. They also gave the definition of amenability for these objects which has important applications to the study of amenability questions in the subfactor theory and in the theory of compact and discrete quantum groups (in what follows, respectively CQG and DQG). In particular, it was shown in \cite{K08} that
a CQG is coamenable if and only if the fusion algebra generated by its representations is amenable.

Here we introduce the notion of a {\it fusion module} $((M, I^M, c^\beta_{u,\alpha}), d^M)$, where $M$ is an $R$-module and also a free $\mathbb Z$-module with basis $I^M$ and structural constants $c^\beta_{u,\alpha}\in \mathbb Z^+$, $d^M$ being the dimension function satisfying some compatibility conditions with the structure of a fusion algebra in $R$. We also give the definition of amenability for fusion modules which extends the one for fusion algebras and apply it to the study of (co)amenability questions for quantum subgroups and coideals of CQG and DQG.

Fusion algebras often appear as Grothendieck rings of certain $C^*$-tensor categories, for instance, bimodule categories in the subfactor theory or unitary representation categories of CQG. Then a $C^*$-tensor category is
called amenable if the corresponding fusion algebra is amenable (cf. \cite{HY00}). Respectively, fusion modules often appear as $\mathbb Z$-modules corresponding to $C^*$-module categories over $C^*$-tensor categories (see the definitions in Preliminaries), then this module category is called amenable if the corresponding fusion module
is amenable

In our study of the (co)amenability properties of coideals of CQG and DQG one of the main tools is the {\it categorical duality}. It assigns to any coaction $\delta$ of a CQG $G$ represented by its Hopf $*$-algebra $(\mathcal O(G),\Delta)$ on a unital $C^*$-algebra $A$ a unique (up to some equivalence) $C^*$-module category
over $Rep(G)$ (see \cite{dCY13, N14}). This module category is in fact the $G$-equivariant $K_0$ group of $A$ or, equivalently, the $K_0$-group of the crossed product algebra $A\rtimes_\delta G$. The last algebra is isomorphic
to $\underset{i\in I}\oplus K(H_i)$, where $K(H_i)$ are compact operators on some Hilbert spaces and $I$ is some index set \cite[Theorem 19]{B95}. Then one can construct a fusion module with a basis $I$. In particular, this works for any right coideal $*$-subalgebra $A\in \mathcal O(G)$ because it carries a coaction $\delta:A \to A\otimes \mathcal O(G)$ which is the restriction of $\Delta$.

If $H$ is a closed quantum subgroup of a CQG $G$ in the sense that there is another CQG $H$ and a Hopf $*$-algebra surjective homomorphism $q_H:\mathcal O(G)\to \mathcal O(H)$, the set $\mathcal O(H\backslash G)=\{a\in \mathcal O(G)|(q_H\otimes id)\Delta(a)=1\otimes a\}$ is so-called quotient type coideal. In this case the above mentioned set $I$ can be identified with the set $Irr(H)$ of (equivalence classes of) irreducible unitary representations of $H$ \cite[Proposition 4.15]{T08}. Then we show that this fusion module equipped with the classical dimension function is amenable in the sense of our definition if and only if $H$ is coamenable.

An important observation is that any coideal of a CQG $G$ is the form $\mathcal O(H\backslash G)$, where $\mathcal O(H)$ is not necessarily a CQG but only a coalgebra which is also a left $\mathcal O(G)$-module equipped with a left $\mathcal O(G)$-module coalgebra map $q_H:\mathcal O(G)\to\mathcal O(H)$ \cite{dCDT22, Ch18}. Using the reasonings close to the ones in \cite{V05}, we are able to show the Morita equivalence of von Neumann algebras generated by the crossed product $\mathcal O(H\backslash G)\rtimes_\delta G$ and by the algebra dual to the coalgebra $\mathcal O(H)$ (moreover, it is a $*$-algebra). This allows us to assign to any coideal of a CQG
$G$ a fusion module $\mathbb C(Rep(H))$ with basis $Irr(H)$ formed by irreducible $*$-representations of this $*$-algebra (they are finite-dimensional) and the classical dimension function $d^H$.

Next we formulate the property of coamenability of $\mathcal O(H)$ and relate it with the amenability of the above mentioned fusion module. We are able to do that for the class of coideals introduced in \cite{dCDT22} and containing all known at this moment examples. For any such coideal there exists a character $g$ on $\mathcal O(G)$ for which the stabilizer coideal $\mathcal O(H\backslash G)^\bot$ inside the dual discrete multiplier Hopf algebra $\mathcal O(\hat G)$ admits a $g$-invariant integral. We will then say that $l^\infty(\hat H)$ is $g$-integrable. In particular, if $g=1$, $\mathcal O(H\backslash G)$ is a so-called {\it compact quasi-subgroup} - see a discussion in \cite{AS23}.  All coideals of quotient type belong to this class.


In the present paper we construct for coideals $\mathcal{O}(H\backslash G)$ such that $l^\infty(\hat H)$ is $g$-integrable the analog of the Fourier transform for a CQG $G$, and with it, a {\it quasi-regular} representation which allows us to formulate the notion of $g$-coamenability of $H$. This notion generalizes coamenability of $G$ as a CQG. In the classical situation it generalizes the notion of coamenability of a subgroup $\Lambda\leq \Gamma$ of a discrete group $\Gamma$ as in \cite{MP03}, where it characterizes coamenability of the inclusion of certain von Neumann algebras coming from the inclusion of groups $\Lambda\subseteq \Gamma$ \cite{MP03} and has been a point of interest for other operator algebraic and representation theoretic considerations (see, for example, \cite{BK21}).

Here is our main result:
\begin{theorem}\label{Coamenability and Amenable Fusion Modules}
Let $G$ be a CQG and $\mathcal{O}(H\backslash G)$ a coideal such that $l^\infty(\hat H)$ is $g$-integrable. Then $H$ is $g$-coamenable if and only if $((\mathbb{C}[Rep(H)], Irr(H), c^\beta_{u,\alpha}, d^H)$ is amenable as a fusion module over $((\mathbb{C}[Rep(G)], Irr(G), N^w_{u,v}), d^G)$.
\end{theorem}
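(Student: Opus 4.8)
The plan is to transport the classical strategy — Kyed's characterisation \cite{K08} of coamenability of a CQG via amenability of its fusion algebra, together with Tomatsu's duality \cite{T08} — to the relative setting furnished by the quasi-regular representation. Write $\lambda_{H\backslash G}$ for the quasi-regular representation built above from the $g$-integrability of $l^\infty(\hat H)$, acting on a Hilbert space $\mathcal{H}_{H\backslash G}$. The first and main step is to isolate the following lemma: $\mathcal{H}_{H\backslash G}$ carries an orthogonal decomposition indexed by $Irr(H)$ identifying it isometrically with $\ell^2\big(Irr(H),(d^H)^2\big)$, and under this identification, for each $u\in Irr(G)$ the operator $\lambda_{H\backslash G}(u)$ (the image of the character of $u$) is unitarily equivalent, up to the standard dimension normalisations, to the structure operator $c_u$ defined by $c_u\,\delta_\alpha=\sum_\beta c^\beta_{u,\alpha}\,\delta_\beta$. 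In particular $\|\lambda_{H\backslash G}(u)\|=\|c_u\|$, while the elementary estimate $\|c_u\|\le d^G(u)$ always holds. The $g$-integrability enters precisely at this point: it supplies the Parseval/orthogonality relation that makes the identification isometric and that accounts faithfully for the $g$-twist and for the possibly non-tracial nature of the relative integral.

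The second step is to reformulate $g$-coamenability of $H$ as a norm condition on $\lambda_{H\backslash G}$. By definition, $g$-coamenability says the $g$-twisted counit-type functional is weakly contained in the quasi-regular representation; a Kesten-type argument (existence of a net of almost-invariant unit vectors $\xi_n\in\mathcal{H}_{H\backslash G}$ with $\langle\lambda_{H\backslash G}(u)\xi_n,\xi_n\rangle\to d^G(u)$) shows this is equivalent to $\|\lambda_{H\backslash G}(u)\|=d^G(u)$ for every $u$, and since the representations of $G$ generate $\mathbb{C}[Rep(G)]$ it suffices to verify this on a generating set. Combining with the lemma, $g$-coamenability of $H$ is equivalent to $\|c_u\|=d^G(u)$ for all $u$, which is exactly the definition — equivalently, via the Følner-type characterisation recalled in the Preliminaries — of amenability of the fusion module $(\mathbb{C}[Rep(H)],Irr(H),c^\beta_{u,\alpha},d^H)$ over $(\mathbb{C}[Rep(G)],Irr(G),N^w_{u,v},d^G)$. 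This proves the forward implication.

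For the converse I would run the same chain of equivalences backwards: amenability of the fusion module yields, through the Følner condition, a net of almost-invariant unit vectors in $\ell^2\big(Irr(H),(d^H)^2\big)$ for the family $\{c_u\}$; transporting them through the unitary equivalence of the lemma gives almost-invariant vectors in $\mathcal{H}_{H\backslash G}$, so the $g$-twisted counit is a weak-$*$ limit of vector states of $\lambda_{H\backslash G}$, i.e.\ it is weakly contained in the quasi-regular representation, which is by definition $g$-coamenability of $H$.

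The step I expect to be the genuine obstacle is the lemma of the first paragraph: pinning down the unitary identification of $\mathcal{H}_{H\backslash G}$ with the weighted $\ell^2(Irr(H))$ and of the $\mathbb{C}[Rep(G)]$-action with the fusion-module action, while honestly bookkeeping the $g$-twist and the relative integral so that the operator norms match on the nose. Once that dictionary is established, both implications are formal consequences of the Kesten/Følner equivalence for the quasi-regular representation.
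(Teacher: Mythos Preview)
Your strategy is the right one and matches the paper's: reduce $g$-coamenability to a Kesten-type norm condition $\|\lambda_H^g(\chi(u))\|=d^G(u)$, and then match this with $\|\Gamma_u\|_{B(\ell^2(Irr(H)))}=d^G(u)$. The paper does exactly this via Proposition~\ref{Kestens Criterion for Quasi-Representations} and Lemma~\ref{Kestens Criterion and Norm}, after which both implications are direct (no F{\o}lner sequences or almost-invariant vectors are needed; the norm equalities suffice).

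There is, however, a genuine gap in your key lemma. The quasi-regular Hilbert space $l^2_g(\hat H)$ is the GNS space of the weight $\psi_{\hat H}$ on $c_{00}(\hat H)=\bigoplus_{\alpha\in Irr(H)}M_{n_\alpha}$, so it is $\bigoplus_\alpha L^2(M_{n_\alpha})$ with the appropriate weights; its dimension over each $\alpha$ is $n_\alpha^2$, not $1$. It is therefore \emph{not} unitarily isomorphic to $\ell^2(Irr(H),(d^H)^2)$, and $\lambda_H^g(\chi(u))$ is not unitarily equivalent to $\Gamma_u$. The space $\ell^2(Irr(H))$ sits inside $l^2_g(\hat H)$ only as the proper subspace spanned by the central projections $P_\alpha$. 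What the paper does instead is a sandwich argument: it introduces a third, smaller space $L^2(Rep(G),\tau_H^g)$ (the GNS space of $\tau_H^g=\omega_H^g\circ\chi$) and establishes the chain
\[
L^2(Rep(G),\tau_H^g)\;\subset\;\ell^2(Irr(H))\;\subset\;l^2_g(\hat H),
\]
on which both $\lambda_H^g(\chi(u))$ and $\Gamma_u$ restrict to the GNS representation $\pi_{\tau_H^g}(u)$. This gives $\|\pi_{\tau_H^g}(a)\|\le\|\Gamma_a\|\le\|\lambda_H^g(\chi(a))\|$. The non-obvious step, Proposition~\ref{Quasi-Character Map Extends C*}, is that $\lambda_H^g\circ\chi$ is \emph{isometric} from $C^*_{\tau_H^g}(Rep(G))$ into $C_{\lambda_H^g}(G)$, i.e.\ the outer two norms coincide, forcing all three to be equal. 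Your proposed direct unitary identification would bypass this, but it is not available; the squeeze between three spaces is the actual mechanism.
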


The most important points of the proof are: (1) an adaptation of Kesten's criterion for arbitrary $*$-representations of 
$\mathcal{O}(G)$ and (2) the identification of $Irr(H)$ with the basis of a fusion module associated to the coideal $\mathcal{O}(H\backslash G)$. The rest of the proof involves adapting constructions in \cite{K08}.

After this, we introduce amenability of $\hat H$ - where $\hat H$ is coming from a coideal $l^\infty(\hat H)$ of $\hat G$ associated to $H$ - which generalizes amenability of $\hat G$ as a DQG. Unlike coamenability, this property is readily defined for arbitrary coideals. It is formulated in terms of the structure of the DQG $\hat G$ and so we view it as a dual property of coamenability of $H$. We justify this view with what follows. With these properties in hand, we prove a partial generalization of Tomatsu's celebrate d theorem \cite{T06} which states that a CQG $G$ is coamenable if and only if its dual $\hat G$ is amenable. For this ``duality theorem'', we introduce the Kac property of $H$, which generalizes the Kac property of $G$ as a CQG.
\begin{theorem}\label{Duality for Coamenability of Quasi Regular Representations}
 Let $G$ be a CQG and let $\mathcal{O}(G/H)$ be a coideal such that $l^\infty(\hat H)$ is $g$-integrable and $H$ is Kac type. Then $\hat H$ is amenable if and only if $H$ is $g$-coamenable.
\end{theorem}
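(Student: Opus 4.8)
The plan is to route everything through Theorem~\ref{Coamenability and Amenable Fusion Modules}, which already identifies $g$-coamenability of $H$ with amenability of the fusion module $(\mathbb{C}[Rep(H)], Irr(H), c^\beta_{u,\alpha}, d^H)$ over $(\mathbb{C}[Rep(G)], Irr(G), N^w_{u,v}, d^G)$. Thus it suffices to prove that, under the Kac hypothesis on $H$, amenability of the coideal $\hat H$ of the DQG $\hat G$ is equivalent to amenability of this fusion module. I would unwind both notions into a F{\o}lner/Reiter-type condition indexed by the basis $Irr(H)$; the only discrepancy between the two will be the dimension function used to weight the summands, and the Kac property is precisely what removes that discrepancy.

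First I would record the two characterizations. Amenability of $\hat H$ is phrased through the structure of $l^\infty(\hat H)\subseteq l^\infty(\hat G)$ — the existence of a $\hat G$-invariant mean on this coideal — and the coideal analogue of the F{\o}lner/invariant-mean equivalence for discrete quantum groups rewrites it as a net of finitely supported, asymptotically translation-invariant positive vectors in $l^\infty(\hat H)$. Measured against the $g$-invariant weight on $l^\infty(\hat H)$ supplied by the $g$-integrability hypothesis, the summand at $\alpha\in Irr(H)$ is weighted by the associated quantum dimension $d_q(\alpha)$. On the other hand, amenability of the fusion module, via the module versions of the equivalences among the Kesten, F{\o}lner and approximate-invariant-mean formulations — in particular the adaptation of Kesten's criterion for $*$-representations of $\mathcal{O}(G)$ used in the proof of Theorem~\ref{Coamenability and Amenable Fusion Modules} — is equivalent to the same kind of net in $Irr(H)$, now asymptotically invariant under the module action of $\mathbb{C}[Rep(G)]$ on $\mathbb{C}[Rep(H)]$ (which on the dual side is exactly the $\hat G$-translation of $l^\infty(\hat H)$), but with the summand at $\alpha$ weighted by the classical dimension $d^H(\alpha)$.

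Now the Kac hypothesis enters: for $H$ of Kac type the $g$-invariant weight on $l^\infty(\hat H)$ is tracial, equivalently $d_q(\alpha)=d^H(\alpha)$ for every $\alpha\in Irr(H)$. Hence the two F{\o}lner-type conditions become literally identical, amenability of $\hat H$ is equivalent to amenability of the fusion module, and Theorem~\ref{Coamenability and Amenable Fusion Modules} closes the loop to $g$-coamenability of $H$. (One could also argue directly, bypassing the fusion module: F{\o}lner data witnessing amenability of $\hat H$ produce almost-invariant vectors for the quasi-regular representation, and conversely, which is the definition of $g$-coamenability of $H$; but the bookkeeping of the module structure is cleanest in the categorical language, which is why I would go through Theorem~\ref{Coamenability and Amenable Fusion Modules}.)

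The step I expect to be the main obstacle is the identification of the two F{\o}lner-type conditions: verifying that the asymptotic invariance in the definition of amenability of $\hat H$ — formulated through the comodule/coideal structure over $l^\infty(\hat G)$ — matches, summand by summand, the asymptotic invariance under the $\mathbb{C}[Rep(G)]$-module action that appears in the fusion-module Kesten criterion, all while keeping track of the $g$-twist. Outside the Kac case the quantum and classical dimensions genuinely disagree, this summand-by-summand matching breaks, and the argument no longer delivers the equivalence; this is exactly why the Kac hypothesis is imposed and why the generalization of Tomatsu's theorem obtained here is only partial.
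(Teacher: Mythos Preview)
Your route is genuinely different from the paper's and, as stated, has real gaps. The paper does \emph{not} pass through Theorem~\ref{Coamenability and Amenable Fusion Modules} or any F{\o}lner-type reformulation. It argues directly and operator-algebraically: for the implication ``$\hat H$ amenable $\Rightarrow$ $H$ $g$-coamenable'', it takes a $\hat G$-invariant state on $l^\infty(\hat H)$, represents $l^\infty(\hat H)$ in standard form on $l^2_g(\hat H)$ (the Kac hypothesis is used precisely here, to make $\psi_{\hat H}$ tracial so the positive cone is easy to describe), approximates the state by vector states, applies Day's trick to upgrade weak$^*$ to norm convergence, and then uses the Powers--St{\o}rmer inequality to convert norm convergence of functionals into norm convergence of their implementing vectors in the positive cone, producing almost-invariant vectors for $\lambda^g_H$. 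The converse is a short multiplicative-domain argument: extend $\epsilon_G$ from $C_{\lambda^g_H}(G)$ to a state on $B(l^2_g(\hat H))$ and restrict to $l^\infty(\hat H)$. Notably, this second direction uses no Kac assumption.

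Your proposal instead needs two ingredients that are not available in the paper and are not routine: (i) a F{\o}lner/Reiter characterization of amenability of a \emph{coideal} $l^\infty(\hat H)$ of a DQG, and (ii) a F{\o}lner characterization of amenability of a fusion \emph{module} (the paper defines module amenability only via $\|\Gamma_\mu\|=1$ and never proves a F{\o}lner equivalence). Both are plausible analogues of known results for groups and fusion algebras, but neither is established here, and the coideal version in particular requires care with the $g$-twist and the module structure. Without these, the ``summand-by-summand'' matching you describe is a heuristic, not a proof. A further cost of your symmetric approach is that it consumes the Kac hypothesis in \emph{both} directions (the dimension matching is needed either way), whereas the paper's argument isolates the Kac dependence in a single direction and gets ``$g$-coamenable $\Rightarrow$ $\hat H$ amenable'' unconditionally. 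If you want to pursue your line, the substantive work is exactly the obstacle you flag: proving the two F{\o}lner equivalences in this coideal/module setting.
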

The proof uses an adaptation of Blanchard and Vaes' proof \cite{BV02} of Tomatsu's theorem. An important point to note is that the Kac property of $H$ is only used to prove that amenability of $\hat H$ implies coamenability of $H$. In particular, it is true that coamenability of $H$ implies amenability of $\hat H$ when $\mathcal{O}(G/H)$ is a coideal such that $l^\infty(\hat H)$ is $g$-integrable with no additional assumptions. A noteworthy application of Theorem~\ref{Duality for Coamenability of Quasi Regular Representations} is that we can use it to prove certain non-standard Podle\`s spheres, written in the form $\mathcal{O}(B_t\backslash SU_q(2))$, have that $B_t$ is $\delta_{SU_q(2)}^{-1/2}$-coamenable where $\delta_{SU_q(2)}$ is the modular element for the discrete quantum group $\widehat{SU_q(2)}$ (see Example~\ref{Podles Spheres are Amenable}).

{\bf Acknowledgements} The authors are grateful to Kenny De Commer for useful discussions and for proposing the idea of the proofs of Propositions~\ref{gQuasiRegular Representation Proposition}  and \ref{diag}. The first author was supported by the ANR project ANR-19-CE40-0002.

{\bf Data Availability Statement:} There is no data involved with this work.

\end{section}

\begin{section} {Preliminaries}

\begin{subsection} {$C^*$-tensor categories and module categories}

Our main reference for tensor categories is \cite{EGNO15} and for $C^*$-tensor categories - \cite{NT13}.

A $C^*$-category $\mathcal C$ is a linear category over the complex number field $\mathbb C$, endowed with
Banach space norms on the morphism sets $\mathcal C(X,Y)$ and a conjugate linear anti-multiplicative involution
$\mathcal C(X, Y ) \to \mathcal C(Y, X), T \mapsto T^*$ satisfying the identity $||T^*T|| = ||T||^2$. We assume
that a $C^*$-category is closed under taking subobjects, so that any projection in the $C^*$-algebra $\mathcal C(X)
=\mathcal C(X, X)$ corresponds to a subobject of $X$. We also assume that $\mathcal C$ is closed under finite direct
sums. A $C^*$-category is called semisimple if its morphism sets are finite dimensional. In such categories one can
decompose any object $X$ into a direct sum of simple objects using minimal projections in the finite-dimensional $C^*$-algebra $\mathcal C(X)$. A unitary functor between $C^*$-categories is a linear functor $F$
such that $F(T^*) = F(T)^*$.

A $C^*$-tensor category is a $C^*$-category $\mathcal C$ endowed with a bifunctor $\otimes : \mathcal C \times\mathcal C  \to \mathcal C$, a distinguished object ${\bf 1}$, and natural unitary isomorphisms
${\bf 1}\otimes X \to X$, $X \otimes {\bf 1}\to X$, $F: (X \otimes Y ) \otimes Z \to X \otimes (Y \otimes Z)$
satisfying the standard set of axioms for monoidal categories. We will assume ${\bf 1}$ to be simple, so $\mathcal C({\bf 1})=\mathbb C$. A unitary tensor functor between $C^*$-tensor categories is a unitary functor F together with a unitary isomorphism $F_1 : {\bf 1} \mapsto F({\bf 1})$ and natural unitary isomorphisms $F_2 : F(X) \otimes F(Y ) \mapsto F(X \otimes Y )$ satisfying the standard compatibility conditions.

\begin{definition} \label{mcat} (c.f. \cite{dCY13}) Let $\mathcal C$ be a $C^*$-multitensor category with unit object ${\bf 1}$. A $C^*$-category
$\mathcal M$ is called a left $\mathcal C$-module $C^*$-category if there is a bilinear $*$-functor $\boxtimes:\mathcal C\times\mathcal
M\to\mathcal M$ with natural unitary transformations $(X\otimes Y)\boxtimes M\to X\boxtimes (Y\boxtimes M)$ and ${\bf 1}\boxtimes M\to M\
(X,Y\in \mathcal C, M\in \mathcal M)$ making $\mathcal M$ a left module category over $\mathcal C$ - see \cite{EGNO15}, Chapter 7. If $\mathcal C$
is strict, we say that $\mathcal M$ is strict (resp., indecomposable) if these natural transformations are identities (resp., if, for all
non-zero $M,N\in \mathcal M$, there is $X\in\mathcal C$ such that $\mathcal M(X\boxtimes M,N)\neq 0$).

We say that an object $M\in\mathcal M$ generates $\mathcal M$ if any object of $\mathcal M$ is isomorphic to a subobject of $X\boxtimes M$
for some $X\in\mathcal C$. $\mathcal M$ is said to be semisimple if the underlying $C^*$-category is semisimple.
\end{definition}

One naturally defines a morphism $F:\mathcal M_1\to\mathcal M_2$ between two $\mathcal C$-module $C^*$-categories as a morphism of the
underlying $C^*$-categories equipped with a unitary natural equivalence $F(X\boxtimes M)\to X\boxtimes F(M),\ \forall\ X\in\mathcal C,\
 M\in\mathcal M$ satisfying some coherence conditions (see \cite{dCY13}).

A $C^*$-tensor category $\mathcal C$ is said to be rigid if every its object $X$ has a dual. In particular, if $\mathcal C$ is strict, this means that there is an object $\overline X$ in $\mathcal C$ and morphisms $R \in \mathcal C({\bf 1}, \overline X \otimes X)$ and $\overline R \in \mathcal C({\bf 1}, X \otimes \overline X )$ satisfying the conjugate equations
$$
(id_{\overline X} \otimes \overline R^*)(R \otimes id_{\overline X}) = id_{\overline X},
(id_X \otimes R^*)(\overline R\otimes id_X) = id_X.
$$
Rigid $C^*$-tensor categories with simple units are always semisimple.
A rigid $C^*$-tensor category has a notion of intrinsic, or quantum, dimension, defined by
$$
d^{\mathcal C}(X) = \underset {(R,\overline R)} {min} \{||R||,||\overline R||\},
$$
where $(R, \overline R)$ runs over the solutions of the conjugate equations for $X$.
\end{subsection}
\begin{subsection} {Compact and discrete quantum groups, their subgroups and coideals}

Our main references concerning the theory of CQGs and DQGs and their representations is \cite{NT13, W98}. A {\bf compact quantum group (CQG)} $G$ has an associated integrable unital Hopf $*$-algebra $(\mathcal{O}(G), \Delta_G, \epsilon_G, S_G, h_G)$. Here
\begin{itemize}
    \item $\Delta_G : \mathcal{O}(G)\to \mathcal{O}(G)\otimes \mathcal{O}(G)$ is the coproduct;
    \item $\epsilon_G : \mathcal{O}(G)\to \mathbb{C}$ is the counit;
    \item $S_G : \mathcal{O}(G)\to \mathcal{O}(G)$ is the antipode;
    \item $h_G : \mathcal{O}(G)\to \mathbb{C}$ is the Haar state.
\end{itemize}
Here $\Delta_G$ and $\epsilon_G$ are unital $*$-homomorphisms, $S_G$ is an antiautomorphism of $\mathcal{O}(G)$, and $h_G$ is a state, i.e., a unital positive functional. Rather than give an axiomatic description of the Hopf $*$-algebra structure, we will give concrete descriptions of what they do when applied to unitary representations of $G$.

A unitary finite dimensional representation of $G$ corresponds to a unitary matrix $u\in M_{n_u}(\mathcal{O}(G))$  such that:
\begin{itemize}
    \item $\Delta_G(u_{i,j}) = \sum^{n_u}_{k=1}u_{i,k}\otimes u_{k,i}$
    \item $\epsilon_G(u_{i,j}) = \delta_{i,j}$;
    \item $S_G(u_{i,j}) = u_{j,i}^*$;
\end{itemize}
when we write $u = [u_{i,j}]$ as a matrix. It is shown in \cite{NT13} that the category $\mathcal C=Rep(G)$ of unitary finite dimensional representations of a CQG $G$ is a rigid $C^*$-tensor category. The tensor product is
$$u\cdot v = u_{12}v_{13} \in \mathcal{O}(G)\otimes M_{n_u}\otimes M_{n_v}$$
where we are using the standard leg numbering notation $u_{12} = u\otimes 1$, $u_{23} = 1\otimes u$, $u_{13} = (\Sigma\otimes id)u_{23}(\Sigma\otimes id)$, etc, and where $\Sigma : a\otimes b\mapsto b\otimes a$ is the flip map. We denote the irreducibles by $Irr(G)$. The irreducibles satisfy
$$h_G(u_{i,j}) = \begin{cases} 0 & \text{if} ~ u\neq 1\\ 1 &\text{if} ~ u = 1 \end{cases}, u\in Irr(G).$$
It turns out the matrix coefficients of the irreducible unitary representations of $G$ span the underlying Hopf $*$-algebra:
$$\mathcal{O}(G) = span\{u_{i,j} : 1\leq i,j \leq n_u, u\in Irr(G)\}.$$
From these formulas it is clear that
\begin{itemize}
    \item $(\Delta_G\otimes id)\Delta_G = (id\otimes \Delta_G)\Delta_G$, in other words $\Delta_G$ is coassociative;
    \item $(\epsilon_G\otimes id)\Delta_G = id = (id\otimes \epsilon_G)\Delta_G$;
    \item $(h_G\otimes id)\Delta_G = h_G = (id\otimes h_G)\Delta_G$, in other words, $h_G$ is (left and right) $\hat G$-invariant.
\end{itemize}
The dual space of any coalgebra has an algebra structure via convolution. In particular, the space of linear functionals $\mathcal{O}(G)^*$ on $\mathcal{O}(G)$ is an algebra via
$$\mu*\nu := (\mu\otimes \nu)\circ\Delta_G, \mu, \nu\in \mathcal{O}(G).$$
A unitary representation $u \in M_u\otimes \mathcal{O}(G)$ defines a representation $\pi_u  : \mathcal{O}(G)^* \to M_{n_u}$ by $\pi_u(\mu) = (\mu\otimes id)(u)$. Then there is an algebra isomorphism
$$c_{00}(\hat G) := \oplus_{u\in Irr(G)}M_{n_u} \cong \{h_G(a\cdot) : a\in \mathcal{O}(G)\}.$$
The $*$-algebra $c_{00}(\hat G)$ has an integrable multiplier Hopf $*$-algebra structure
$$(c_{00}(\hat G), \Delta_{\hat G}, \epsilon_{\hat G}, S_{\hat G}, \psi_L, \psi_R).$$
We will not discuss the details here, however, $\hat G$ corresponds to a {\bf discrete quantum group (DQG)} which is the Pontryagin dual of $G$. We denote the von Neumann algebra
$$l^\infty(\hat G) := c_{00}(\hat G)'' = \bigoplus_{u\in Irr(G)}^{l^\infty}M_{n_u}.$$
Then, $\pi_u$ extends to a normal irreducible unital $*$-representation $\pi_u : l^\infty(\hat G)\to M_{n_u}$. Denote $Rep(l^\infty(\hat G))$ as the normal finite dimensional unital $*$-representations on $l^\infty(\hat G)$. The elements of $Rep(l^\infty(\hat G))$ are in bijection with the elements of $Rep(G)$. Let $W =\oplus_{u\in Irr(G)}u$. The coproduct extends to a normal unital $*$-homomorphism $\Delta_{\hat G} : l^\infty(\hat G)\to l^\infty(\hat G)\otimes l^\infty(\hat G)$ and satisfies
$$(id\otimes \Delta_{\hat G})(W) = W_{12}W_{13}.$$
Then, by setting $\pi_u\cdot \pi_v = (\pi_u\otimes \pi_v)\circ\Delta_{\hat G}$, it follows from the definitions that $\pi_v\cdot \pi_u = \pi_{u\cdot v}$. In this way, $Rep(l^\infty(\hat G))$ identifies with $Rep(G)$ as a rigid $C^*$-tensor category.

There is also an operator algebraic perspective on CQGs, which is obtained by taking certain operator algebraic completions of $\mathcal{O}(G)$. Given a unital $*$-representation $\pi : \mathcal{O}(G) \to B(H_\pi)$, we let
$$C_\pi(G) = \overline{\mathcal{O}(G)} \subseteq B(H_\pi).$$
In general, $C_\pi(G)$ is just a unital $C^*$-algebra and the ``quantum group structure'' might not extend to it. There are canonical operator algebraic constructions that are ``compatible'' with the quantum group structure, however.

Let $\varpi$ be the universal $*$-representation (obtained, for example, by taking a direct sum of all GNS representations of states on $\mathcal{O}(G)$). We denote the corresponding $C^*$-algebra by $C_\varpi^*(G) = C_u(G)$. Then $\Delta_G$, $h_G$, and $\epsilon_G$ all extend continuously to $C_u(G)$. By construction, all unital $*$-representations on $\mathcal{O}(G)$ extend to $C_u(G)$.

Now let $\lambda_G$ be the GNS representation coming from $h_G$ and Denote $C_r(G) := C^*_{\lambda_G}(G)$. We denote GNS Hilbert space by $L^2(G)$. Both, $\Delta_G$ and $h_G$ extend continuously to $C_r(G)$. We also denote the von Neumann algebra $L^\infty(G) = C_r(G)''\subseteq B(L^2(G))$. It turns out that $\Delta_G$ and $h_G$ admit weak$^*$ continuous extensions to $L^\infty(G)$ as well. Unlike the universal construction, the counit does not always extend continuously  to $C_r(G)$. In fact, $\epsilon_G$ extends continuously to $C_r(G)$ if and only if $G$ is coamenable, which is equivalent to having $C_r(G) = C_u(G)$ (see \cite{BMT01}).

Another thing worth mentioning is the failure of the extendability of $S_G$ to $C_u(G)$ and $C_r(G)$. Indeed, $S_G$ extends continuously to $C_u(G)$ and $C_r(G)$ if and only if $S_G^2 = id$, which is equivalent to the Kac property of $G$.

The normal functionals $l^1(\hat G) = l^\infty(\hat G)_* = \oplus^{l^1}_{u\in Irr(G)}(M_{n_u})_*$ is an algebra with respect to convolution. Furthermore, we have $\mathcal{O}(G) \cong \oplus_{u\in Irr(G)}(M_{n_u})_*$ and hence $\mathcal{O}(G)$-$c_{00}(\hat G)$ duality given by
$$\langle x\triangleleft a, b\rangle = \langle x, ab\rangle, x\in c_{00}(\hat G), a,b\in \mathcal{O}(G).$$
If we identity $a\in \mathcal{O}(G)$ with $f_a\in l^1(\hat G)$, the duality bracket satisfies $\langle x, ab\rangle = f_a*f_b(x)$.

Let $G$ be a CQG and $\mathcal{O}(G)$ the associated Hopf $*$-algebra.
\begin{definition}
    A (right) {\bf coideal} is a unital $*$-subalgebra $A \subset \mathcal{O}(G)$ such that
    $$\Delta_G(A)\subseteq A\otimes \mathcal{O}(G).$$
    We can also consider a left version $A = S_G(A)$ (which satisfies $\Delta_G(S_G(A))\subseteq \mathcal{O}(G)\otimes A$).
\end{definition}
\begin{definition}
    Let $G$ and $H$ be CQGs. We write $H\leq G$ and say $H$ is a {\bf closed quantum subgroup} of $G$ if there exists a unital surjective $*$-homomorphism $q_H : \mathcal{O}(G)\to \mathcal{O}(H)$ such that
    $$(q_H\otimes q_H)\circ \Delta_G = \Delta_H\circ q_H.$$
    In this case, the quotient algebra
    $$\mathcal{O}(H\backslash G) := \{a\in \mathcal{O}(G) : (q_H\otimes id)\Delta_G(a) = 1\otimes a\}$$
    is a right coideal. Any coideal of the form $\mathcal{O}(H\backslash G)$ for some $H\leq G$ is said to be of {\bf quotient type}.
\end{definition}
For coideals there is generally no associated CQG quotient of $\mathcal{O}(G)$, however, there is an object denoted $\mathcal{O}(H)$ that does take the place of a closed quantum subgroup.

The following can be found in \cite{dCDT22, Ch18}, however, the ideas have been around for much longer (see \cite{T79}). Fix a coideal $A$ and set
$$\mathcal{O}(H) = \mathcal{O}(G) / \mathcal{O}(G)A_+$$
where $A_+ = A\cap \ker(\epsilon_G)$. It turns out that $\mathcal{O}(H)$ is a (left) $\mathcal{O}(G)$-module $*$-coalgebra. Let $\Delta_H$ be the coproduct on $\mathcal{O}(H)$ and
$$q_H : \mathcal{O}(G)\to \mathcal{O}(H)$$
the quotient map, which we note preserves the coproducts and is a $\mathcal{O}(G)$-module map. On the other hand, we can recover $A$ from $\mathcal{O}(H)$ by the ``quotient'' algebra
$$A = \mathcal{O}(H\backslash G) := \{a\in \mathcal{O}(G) : (q_H\otimes id)\Delta_G(a) = q_H(1)\otimes a\}.$$
It is clear that $\mathcal{O}(H\backslash G)$ is quotient type if and only if $q_H$ is a $*$-homomorphism. From now on, we will denote any coideal by $\mathcal{O}(H\backslash G)$.

We will occasionally use the following terminology to simplify exposition.
\begin{definition}
    Let $G$ be a CQG and $\mathcal{O}(H\backslash G)$ a coideal. We will call $\mathcal{O}(H)$ a {\bf quantum quotient} of $\mathcal{O}(G)$.
\end{definition}
\begin{remark}\label{Quotient Comodule Remark}
    The identification $\mathcal{O}(H\backslash G) \iff \mathcal{O}(H)$ is a bijection between the set of all coideals of $\mathcal{O}(G)$ and the set of all $*$-coalgebra $\mathcal{O}(G)$-module quotients of $\mathcal{O}(G)$. In particular, one could begin by formulating $*$-coalgebra $\mathcal{O}(G)$-module quotients of $\mathcal{O}(G)$ and then build the coideals as ``quotient'' algebras as above (see, for example, \cite{Ch18} for more on this).
\end{remark}
There is a functional $h_H : \mathcal{O}(H)\to \mathbb{C}$ that is left and right invariant:
$$(h_H\otimes id)\circ\Delta_H = h_H = (id\otimes h_H)\circ\Delta_H.$$
We will always use the normalization $h_H\circ q_H(1) = 1$. Then $\omega_H = h_H\circ q_H : \mathcal{O}(G)\to \mathbb{C}$ is a (not necessarily positive) idempotent functional such that
$$(h_H\otimes id)\circ\Delta_G : \mathcal{O}(G)\to \mathcal{O}(H\backslash G)$$
is a (not necessarily positive) projection.

As with any coalgebra, the linear dual space $\mathcal{O}(H)^*$ is an algebra via convolution:
$$\mu*\nu = (\mu\otimes \nu)\circ\Delta_H, ~ \mu,\nu\in \mathcal{O}(H).$$
Moreover, $\mathcal{O}(H)^*\subseteq \mathcal{O}(G)^*$ is a subalgebra.

We will say an irreducible $*$-representation $\alpha$ of $\mathcal{O}(H)^*$ is admissible if it is a subrepresentation of some $u|_{\mathcal{O}(H)^*}$, $u\in Rep(G)$. We will let $Irr(H)$ denote the admissible irreducible $*$-representations of $\mathcal{O}(H)^*$. There is an algebra isomorphism
$$\mathcal{F} : \{h_Ha : a\in \mathcal{O}(G)\} \to c_{00}(\hat H) := \bigoplus_{\alpha\in Irr(H)} M_{n_\alpha} \subseteq l^\infty(\hat G)$$
where $h_Ha \in \mathcal{O}(H)^*$ is the functional such that
$$(h_Ha)(x) = h_H(ax), ~ x\in \mathcal{O}(H)$$
(cf. \cite{dCDT22}). The $\mathcal{O}(G)$-module structure on $\mathcal{O}(H)$ makes $l^\infty(\hat H) = c_{00}(\hat H)''$ a coideal of $l^\infty(\hat G)$:
$$\Delta_{\hat G}(l^\infty(\hat H))\subseteq l^\infty(\hat G)\overline{\otimes} l^\infty(\hat H).$$
The coideal $l^\infty(\hat H)$ is called the {\bf codual} of $\mathcal{O}(H\backslash G)$. In terms of duality, this is expressed by
$$\langle x\triangleleft a, b\rangle = \langle x, ab\rangle, a\in \mathcal{O}(G), x\in c_{00}(\hat H), b\in \mathcal{O}(H)$$
where $x \triangleleft a = (\hat{\mathcal{F}}(a)\otimes id)\Delta_{\hat G}(x)$ and we are using the isomorphism
$$\hat{\mathcal{F}} : \mathcal{O}(G)\to \oplus_{u\in Irr(G)}(M_{n_u})_{*}\subseteq l^1(\hat G).$$
Finally, we also have
$$\mathcal{F}(h_Ha) = \mathcal{F}(h_H)\triangleleft a.$$
\end{subsection}

\begin{subsection} {Categorical duality}

In this paper, we deal with categories and module categories related to coactions of compact quantum groups
on unital $C^*$-algebras.

\begin{definition} \label{cov}
Let $A$ be a unital $C^*$-algebra and $(C_r(G),\Delta)$ be a CQG, and let $\delta:A\to A\otimes C_r(G)$ be a unital faithful $*$-homomorphism called a \it{coaction} of $G$ on $A$. The triple $(A,G,\delta)$ is called a $G$-$C^*$-algebra or a quantum homogeneous space if the following statements hold:
\begin{itemize}
    \item The map $\delta$ satisfies $(\delta\otimes id)\circ\delta= (id\otimes\Delta)\circ\delta$.
      \item The vector space spanned by $\delta(A)(\mathbb C\otimes C_r(G))$ is dense in $A\otimes C_r(G)$.
 \end{itemize}
\end{definition}

The following categorical duality theorem for $G$-$C^*$-algebras holds (see \cite{dCY13}, Theorem 6.4 and \cite[Theorem 3.3]{N14}):

\begin{theorem} \label{catdual}
Let $G$ be a reduced CQG. Then the following categories are equivalent:

(i) The category of unital $G$-$C^*$-algebras with unital $G$-equivariant $*$-homomorphisms as morphisms.

(ii) The category of pairs $(\mathcal M, Q)$, where $\mathcal M$ is a $Rep(G)$-module $C^*$-category and $Q$ is a
generator in $\mathcal M$, with equivalence classes of unitary $Rep(G)$-module functors respecting the prescribed generators
as morphisms.
\end{theorem}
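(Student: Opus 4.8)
The plan is to construct the equivalence in both directions and verify it is a (weak) inverse pair. Given a unital $G$-$C^*$-algebra $(A,G,\delta)$, I would first extract the module category $\mathcal M_A$: objects are the finitely generated $G$-equivariant Hilbert $A$-modules (equivalently, the spectral subspaces, or the images of projections in $A\otimes B(H_u)$ that are $\delta$-invariant in the appropriate sense), morphisms are the $G$-equivariant adjointable maps, and the $Rep(G)$-action $X\boxtimes\mathcal E$ is given by the internal tensor product with the equivariant bimodule attached to $X$. One checks this is a semisimple $C^*$-category closed under subobjects and direct sums, the $\boxtimes$-action carries the natural unitary associativity/unit transformations inherited from the tensor structure on $Rep(G)$ and the coaction axiom $(\delta\otimes id)\delta=(id\otimes\Delta)\delta$, and that $A$ itself (viewed as a module over itself) is a generator $Q_A$ because density of $\delta(A)(\mathbf 1\otimes C_r(G))$ forces every spectral subspace to appear inside $X\boxtimes A$ for suitable $X$. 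A $G$-equivariant unital $*$-homomorphism $A\to B$ induces base change $\mathcal E\mapsto \mathcal E\otimes_A B$, which is a $Rep(G)$-module functor sending $Q_A$ to $Q_B$; functoriality is routine.

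Conversely, given $(\mathcal M,Q)$ I would build $A_{\mathcal M,Q}=\bigoplus_{X\in Irr(G)}\mathcal M(Q, X\boxtimes Q)\otimes \bar H_X$, or more invariantly the ``function algebra'' $\mathrm{Nat}(\mathrm{id}_{\mathcal M}, -\boxtimes(-))$-type object; concretely the product uses the module associativity constraint and the tensor structure of $Rep(G)$, the $*$-operation uses the conjugates $(R,\bar R)$ in the rigid category $Rep(G)$ (this is where rigidity, hence semisimplicity, is essential), and the coaction $\delta$ is read off from the coproduct $\Delta$ via the comultiplicative structure of $W=\bigoplus_u u$. One must check: $A_{\mathcal M,Q}$ is a $C^*$-algebra (completeness/positivity comes from the $C^*$-norms on morphism spaces together with standard solutions of the conjugate equations), $\delta$ is a faithful unital coaction, and the Podle\'s-type density condition holds — the latter follows because $Q$ generates $\mathcal M$. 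A morphism of pairs $(F,\text{generators})$ induces a unital $G$-equivariant $*$-homomorphism on the associated algebras by applying $F$ to morphism spaces.

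The third block is the verification that these two constructions are mutually inverse up to natural equivalence. In one direction, $\mathcal M_{A_{\mathcal M,Q}}\simeq\mathcal M$ as $Rep(G)$-module categories compatibly with generators: the equivalence sends an equivariant Hilbert module over $A_{\mathcal M,Q}$ to the object of $\mathcal M$ obtained by ``evaluating'' it, and one uses semisimplicity to reduce to checking it on the generator and on simples $X\boxtimes Q$, where it becomes the tautological identification $\mathcal M(Q,X\boxtimes Q)=\mathcal M(Q,X\boxtimes Q)$. In the other direction, $A\cong A_{\mathcal M_A,Q_A}$ as $G$-$C^*$-algebras: both sides decompose as $\bigoplus_{X}(\text{spectral subspace of type }X)$, and the spectral subspace of $A$ of type $X$ is canonically $\mathcal M_A(Q_A, X\boxtimes Q_A)\otimes\bar H_X$ by definition of $\mathcal M_A$; one then checks the products and coactions match, which is a bookkeeping exercise with the conjugate equations and leg-numbering. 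Naturality in morphisms on both sides is formal once the object-level equivalences are pinned down.

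The main obstacle I expect is making the algebra structure and the $C^*$-norm on $A_{\mathcal M,Q}$ genuinely well-defined and complete: the multiplication built from module-category data is only obviously defined on the algebraic direct sum $\bigoplus_{X\in Irr(G)}$, and one needs the correct (reduced) $C^*$-completion together with a faithful state (supplied by the ``Haar'' conditional expectation onto the unit isotypical component) to land in the reduced setting demanded by the hypothesis that $G$ is reduced; controlling positivity here requires a careful choice of standard solutions $(R_X,\bar R_X)$ of the conjugate equations so that the $*$-structure is a genuine involution compatible with the norm. The rest — coassociativity of $\delta$, the Podle\'s condition, and the two round-trip equivalences — is, modulo diagram chases, comparatively mechanical, but all of it leans on rigidity/semisimplicity of $Rep(G)$ and on the fact (recorded earlier in the excerpt) that the crossed product $A\rtimes_\delta G\cong\bigoplus_i K(H_i)$, which is what guarantees $\mathcal M_A$ is semisimple with the expected simple objects.
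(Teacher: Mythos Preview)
The paper does not supply its own proof of this theorem: it is stated in the preliminaries with the parenthetical ``(see \cite{dCY13}, Theorem 6.4 and \cite[Theorem 3.3]{N14})'' and then used as a black box. So there is no in-paper argument to compare your sketch against.

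For what it is worth, your outline is broadly in the spirit of the cited De~Commer--Yamashita and Neshveyev proofs: from $(A,\delta)$ one passes to the module category of equivariant finitely generated projective Hilbert $A$-modules with generator $A$, and from $(\mathcal M,Q)$ one reconstructs the algebra as $\bigoplus_{X\in Irr(G)}\overline{H_X}\otimes \mathcal M(Q,X\boxtimes Q)$ with product, $*$-structure and coaction built from the rigid tensor structure of $Rep(G)$, then completes in the reduced norm. Your identification of the delicate point --- defining the $C^*$-norm and positivity on the reconstructed algebra via a Haar-type conditional expectation and standard solutions of the conjugate equations --- is exactly where the work lies in those references. One minor correction: the semisimplicity of $\mathcal M_A$ and the decomposition $A\rtimes_\delta G\cong\bigoplus_i K(H_i)$ are consequences of the construction (ultimately of rigidity of $Rep(G)$ and ergodic-type arguments), not inputs to it, so you should not lean on the crossed-product decomposition as a hypothesis when verifying the round-trip equivalences.
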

In particular, given a unital $G$-$C^*$-algebra $(A,G,\delta)$, the corresponding $Rep(G)$-module $C^*$-category
$\mathcal M$ consists of finitely generated projective right Hilbert $A$-modules $(E,\delta_E)$ with ajointable equivariant
maps as morphisms. The module category structure, if $U\in Rep(G)$ and $E\in \mathcal M$ is given by $H_U\otimes E$.
$(A,\delta)$ is a generator in $\mathcal M$, i.e., any object of $\mathcal M$ is of the form $p(H_U\otimes A)$,
where $p$ is a projection. Its Grothendieck group denoted by $K^G_0 (A)$ is called an equivariant $K_0$-group.

There is the Green-Julg isomorphism $K^G_0 (A)\cong K_0(A\rtimes_\delta G)$, where the last one is the ordinary $K_0$-group
of the crossed product algebra $A\rtimes_\delta G$ (see, for instance, \cite[Th\'eor\`eme 5.10]{V04}).

\begin{remark}\label{cdualcoideal}
Any coideal $A$ of a CQG $G$ is related to the obvious injective unital $G$-equivariant $*$-homomorphism $A\mapsto C(G)$ of $G$-$C^*$-algebras. Then the above categorical duality implies that the pair $(\mathcal M, Q)$ associated with $A$ comes
together with a surjective unitary $Rep(G)$-module functor from $Rep(G)$ viewed as a $Rep(G)$-module category to $\mathcal M$
sending the trivial representation $\varepsilon$ of $G$ to $Q$.
\end{remark}

\end{subsection}
\end{section}
\begin{section} {Basic definitions and examples.}

\begin{subsection}{Basic definitions.}

\begin{definition} \label{fusalg} (see \cite{HI98} or \cite{K08})

A fusion algebra is a unital algebra $R$ with a basis $I$ over $\mathbb Z$ such that:
$$
1)\quad \zeta\eta=\Sigma_{\alpha}\ N^{\alpha}_{\zeta,\eta}\alpha\quad\quad \forall \zeta,\eta\in I,
$$
where $N^{\alpha}_{\zeta,\eta}\in \mathbb Z^+$, only finitely many nonzero.

2) There is a bijection $\zeta\mapsto\overline\zeta$ of $I$ which extends to a $\mathbb Z$-linear
anti-multiplicative involution of $R$.

3)  Frobenius reciprocity:
$$
N^{\alpha}_{\zeta,\eta}=N^{\eta}_{\overline\zeta,\alpha}=N^{\zeta}_{\alpha,\overline\eta}\quad\quad \forall \zeta,\eta,\alpha\in I. $$
4) There is a {\it dimension function} $d:I\to[1,\infty[$ such that $d(\zeta)=
d(\overline\zeta)$ which extends to a $\mathbb Z$-linear multiplicative map $R\to\mathbb R$.
\end{definition}

Let now $(R,I^R, N^w_{u,v}, d^R)$ be a fusion algebra with a dimension function
\begin{definition} \label{module}
\vskip 0.5cm
a) A left fusion module over $R$ is a collection $(M, I^M, c^\beta_{u,\alpha})$, where:
\vskip 0.5cm
(i) $M$ is a free left $\mathbb Z$-module over $R$, $I^M$ is a basis in $M$.
 \vskip 0.5cm
(ii) $c^\beta_{u,\alpha} (u\in I^R,\alpha,\beta\in I^M)$ are non-negative integers which define a module structure
on $M$, i.e., $u\circ\alpha=\underset {\beta\in I^M}\sum c^\beta_{u,\alpha} \beta$ (this sum is finite).
\vskip 0.5cm
(iii) The following relation holds: $c^\beta_{u,\alpha}=c^\alpha_{\overline u,\beta}$ for all $u\in I^R,\alpha,
\beta\in I^M$, where $u\mapsto \overline u$ is the involution in $I^R$.
\vskip 0.5cm
b) A dimension function on $M$ is a linear form $d^M:M\to\mathbb C$ such that $d^M(\alpha)>0$ and $d^M(u\circ\alpha)=d^R(u)d^M(\alpha)$.
\end{definition}
Note that the conditions (ii),(iii) and b), and the relation $d^R(u)=d^R(\overline u)$ imply that
$$
\underset {\beta\in I^M}\sum c^\beta_{u,\alpha}\frac{d^M(\beta)}{d^R(u)d^M(\alpha)}=
\underset {\beta\in I^M}\sum c^\alpha_{\overline u,\beta}\frac{d^M(\beta)}{d^R(u)d^M(\alpha)}=1.
$$

Define the left action operator
$$
\Gamma_u: I_R\to M,\quad u\mapsto u\circ \alpha
$$
which is densely defined on $l^2(I^M)$ and let $\Gamma_U$ be its extension to $R$ by linearity. Using \cite[Lemma 2.7.3]{NT13}, it is easy to prove the following analog of \cite[Proposition 2.7.4]{NT13}:
\begin{proposition} \label{gamma}
The operator $\Gamma_u$ extends to a bounded linear operator on $l^2(I^M)$ and for the above dimension function $d^R$ we have
$$
||\Gamma_u||_{\mathcal B(l^2(I^M))}\leq d^R(u)\quad\text{for\ all}\quad u\in I^R.
$$
\end{proposition}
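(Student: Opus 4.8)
The plan is to realize $\Gamma_u$ as an integral operator against a nonnegative kernel and to bound its norm by a Schur test whose test weights are supplied by the dimension function $d^M$; this is exactly the mechanism of \cite[Lemma 2.7.3]{NT13}. Concretely, since $u\circ\alpha=\sum_{\beta\in I^M}c^\beta_{u,\alpha}\beta$, the operator $\Gamma_u$ has matrix entries $(\Gamma_u)_{\beta,\alpha}=c^\beta_{u,\alpha}\ge 0$, with only finitely many nonzero in each column, so $\Gamma_u$ is well defined on the algebraic span $\mathbb{C}[I^M]\subseteq l^2(I^M)$, and it suffices to estimate $|\langle\Gamma_u\xi,\eta\rangle|\le\sum_{\beta,\alpha}c^\beta_{u,\alpha}|\xi_\alpha||\eta_\beta|$ for finitely supported $\xi,\eta$.

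Next I would record the two weighted summation identities that drive the estimate. The dimension-function axiom $d^M(u\circ\alpha)=d^R(u)d^M(\alpha)$ gives, for every $\alpha\in I^M$,
$$\sum_{\beta\in I^M}c^\beta_{u,\alpha}\,d^M(\beta)=d^R(u)\,d^M(\alpha),$$
while the relation $c^\beta_{u,\alpha}=c^\alpha_{\overline u,\beta}$ of Definition~\ref{module}(iii), together with $d^R(\overline u)=d^R(u)$, applied to the previous identity with $u$ replaced by $\overline u$ and $\alpha,\beta$ exchanged, gives, for every $\beta\in I^M$,
$$\sum_{\alpha\in I^M}c^\beta_{u,\alpha}\,d^M(\alpha)=\sum_{\alpha\in I^M}c^\alpha_{\overline u,\beta}\,d^M(\alpha)=d^R(\overline u)\,d^M(\beta)=d^R(u)\,d^M(\beta).$$
(Both of these are already contained in the displayed identity preceding the statement: the first is its outer equality after clearing denominators, the second its inner equality after relabeling.) Thus the nonnegative matrix $(c^\beta_{u,\alpha})_{\beta,\alpha}$ and its transpose both carry the strictly positive weight vector $(d^M(\alpha))_\alpha$ to $d^R(u)$ times itself.

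Finally I would run the Schur test. Writing $w(\alpha)=d^M(\alpha)>0$ and factoring $c^\beta_{u,\alpha}|\xi_\alpha||\eta_\beta|=\bigl(c^\beta_{u,\alpha}\tfrac{w(\beta)}{w(\alpha)}|\xi_\alpha|^2\bigr)^{1/2}\bigl(c^\beta_{u,\alpha}\tfrac{w(\alpha)}{w(\beta)}|\eta_\beta|^2\bigr)^{1/2}$, the Cauchy--Schwarz inequality followed by the two identities above yields $|\langle\Gamma_u\xi,\eta\rangle|\le d^R(u)\,\|\xi\|_2\,\|\eta\|_2$, so $\Gamma_u$ extends to a bounded operator on $l^2(I^M)$ with $\|\Gamma_u\|\le d^R(u)$; equivalently one simply quotes \cite[Lemma 2.7.3]{NT13} with weight $w=d^M$ and constant $d^R(u)$. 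The argument is essentially routine; the only point requiring care is the second summation identity, where one must correctly invoke the symmetry $c^\beta_{u,\alpha}=c^\alpha_{\overline u,\beta}$ and $d^R(\overline u)=d^R(u)$ to convert the ``column-sum'' bound into the ``row-sum'' bound — without both bounds the Schur test does not apply. I expect no genuine obstacle beyond this bookkeeping.
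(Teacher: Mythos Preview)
Your proposal is correct and follows essentially the same approach as the paper: both apply \cite[Lemma 2.7.3]{NT13} (the Schur test) to the nonnegative matrix $(c^\beta_{u,\alpha})$ with weight vector $(d^M(\alpha))_\alpha$, verifying the row- and column-sum conditions via the dimension-function axiom and the symmetry $c^\beta_{u,\alpha}=c^\alpha_{\overline u,\beta}$. The only cosmetic difference is that the paper phrases this as checking $\frac{1}{d^R(u)}\Gamma_u(f)=f$ and $\frac{1}{d^R(u)}(\Gamma_u)^t(f)=f$ for $f=(d^M(\alpha))_\alpha$ and then cites the lemma, whereas you spell out the Cauchy--Schwarz step explicitly.
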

\begin{proof}
    Indeed, it suffices to apply \cite[Lemma 2.7.3]{NT13}, to the matrix $\frac{1}{d^R(u)}\Gamma_u= [\frac{c^\beta_{u,\alpha}}{d^R(u)}]$ and to check, using condition (iii) above, that for the finite
vectors $f=g=(d^M(\alpha)\ (\alpha\in I^M)$ we have $\frac{1}{d^R(u)}\Gamma_u(f)=g$ and $\frac{1}{d^R(u)}
(\Gamma_u)^t(g)=f$.

Note that the condition (iii) above means that $(\Gamma_u)^*=\Gamma_{\overline u}$.
\end{proof}
Fix now a dimension function $d^M$ on $M$. For a probability measure $\mu$ on $I^R$
define a contraction $\Gamma_\mu$ on $l^2(I^M)$ by
$$
\Gamma_\mu=\underset{u\in I^R}\sum\frac{\mu(u)}{d^R(u)}\Gamma_u.
$$
\begin{definition} \label{amen}
\vskip 0.5cm
The pair $((M, I^M, c^\beta_{u,\alpha}), d^M)$ is called amenable fusion module over the fusion
algebra $(R,I^R, N^w_{u,v}, d^R)$ if the following condition holds:
$$
||\Gamma_\mu|| = 1\quad\text{for\ every\ probability\ measure}\ \mu\ on\ I^R.
$$
\end{definition}
Let us reformulate the definition of an amenable fusion module in terms of multiplication by fusion elements instead of ``convolution'' by probability measures. In the classical picture of a discrete dual $\hat G$, this is akin to the difference
between considering the extension of the left regular representation to $\mathbb{C}[G]$ versus to $\mathrm{Prob}(G)$.

Define the following operators for arbitrary $u = \sum_{v\in I^R} c_v v \in R$:
$$
\Gamma_u = \sum_{v\in I^R} c_v \Gamma_v.
$$
Note that by scaling, the definition of an amenable fusion ring in Definition~\ref{amen} is clearly equivalent to having $||\Gamma_\mu||_{\mathcal{B}(l^2(I^R))} = ||\mu||_1$ for every bounded positive measure $\mu$ on $I^R$.

Recall that the finitely supported probability measures on $I^R$ are norm dense in $\mathrm{Prob}(I^R)$. Moreover, an easy calculation shows that $\mu\mapsto \Gamma_\mu$ is norm-norm continuous. In what follows, let $R^+ = \{\sum_{v\in I^r} c_v v : c_v\geq 0\}$.
\begin{proposition} \label{reform}
    We have that $((\mathrm{Rep}(M), I^M, c^\beta_{u,\alpha}), d^M)$ is amenable if and only if $||\Gamma_u|| = d^R(u)$ for every $u\in R^+$.
\end{proposition}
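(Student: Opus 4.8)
The plan is to prove the equivalence by reducing everything to finitely supported measures and then using a scaling/density argument to pass between the two conditions. First I would record the trivial direction: if $\|\Gamma_u\| = d^R(u)$ for every $u \in R^+$, then for a finitely supported probability measure $\mu = \sum_{u} \mu(u)\, u$ (viewed as an element of $R^+$) we have $\Gamma_\mu = \sum_u \frac{\mu(u)}{d^R(u)}\Gamma_u$, but rescaling each basis element by $d^R(u)^{-1}$ we may instead consider the element $\nu = \sum_u \frac{\mu(u)}{d^R(u)} u \in R^+$, whose operator is exactly $\Gamma_\mu$. One checks $\|\Gamma_\mu\| \le \sum_u \frac{\mu(u)}{d^R(u)}\|\Gamma_u\| = \sum_u \mu(u) = 1$ by Proposition~\ref{gamma}, so the content is the reverse inequality $\|\Gamma_\mu\| \ge 1$; this follows from the hypothesis applied to a single well-chosen element, or more cleanly by noting that $\mu \mapsto \|\Gamma_\mu\|$ is norm continuous (as remarked in the excerpt) and that the supremum over all finitely supported $\mu$ of $\|\Gamma_\mu\|$ controls what happens. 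Actually the cleanest route: by the hypothesis $\|\Gamma_u\| = d^R(u)$, taking $u$ supported on a single point $v$ already gives $\|\Gamma_{\delta_v}\| = \frac{1}{d^R(v)}\|\Gamma_v\| = 1$, and since $\Gamma_{\delta_v}$ is a contraction achieving norm $1$, one gets $\|\Gamma_\mu\| = 1$ for all $\mu$ by a convexity/positivity argument on the positive operators $\Gamma_v^*\Gamma_v$ together with the fact (noted after Proposition~\ref{gamma}) that $(\Gamma_u)^* = \Gamma_{\bar u}$.

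For the forward direction, assume the fusion module is amenable, i.e. $\|\Gamma_\mu\| = 1$ for every probability measure $\mu$ on $I^R$. I would fix $u = \sum_{v \in I^R} c_v v \in R^+$ with $c_v \ge 0$, not all zero, and set $S = \sum_v c_v\, d^R(v)$, which is finite since only finitely many $c_v$ are nonzero (elements of $R$ are finite combinations). Then $\mu(v) := \frac{c_v\, d^R(v)}{S}$ defines a finitely supported probability measure on $I^R$, and unwinding the definitions, $\Gamma_\mu = \sum_v \frac{\mu(v)}{d^R(v)}\Gamma_v = \frac{1}{S}\sum_v c_v \Gamma_v = \frac{1}{S}\Gamma_u$. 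Amenability gives $\|\Gamma_\mu\| = 1$, hence $\|\Gamma_u\| = S$. But this is the wrong normalization — I actually want $\|\Gamma_u\| = d^R(u)$ where $d^R(u) = \sum_v c_v\, d^R(v) = S$ by linearity of the dimension function. So in fact $S = d^R(u)$ and we are done: $\|\Gamma_u\| = d^R(u)$.

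Let me double-check the bookkeeping, since this is where an error would hide: the operator assigned to a general $u = \sum c_v v \in R$ is defined in the excerpt as $\Gamma_u = \sum_v c_v \Gamma_v$ (not $\sum_v \frac{c_v}{d^R(v)}\Gamma_v$), whereas $\Gamma_\mu = \sum_v \frac{\mu(v)}{d^R(v)}\Gamma_v$ carries the extra $d^R(v)^{-1}$ factors. With $\mu(v) = c_v\, d^R(v)/S$ these factors cancel and produce $\Gamma_\mu = S^{-1}\Gamma_u$, and since $d^R$ is multiplicative and linear, $d^R(u) = \sum_v c_v\, d^R(v) = S$. So the two conditions match exactly, and the only inputs needed are: (a) $d^R$ extends $\mathbb{Z}$-linearly and multiplicatively, (b) elements of $R$ are finitely supported so $S < \infty$, (c) every finitely supported probability measure arises this way and, conversely, scaling sends $\Gamma_u$ to a $\Gamma_\mu$, and (d) Proposition~\ref{gamma} for the easy inequality in the other direction. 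The main obstacle, such as it is, is purely notational: keeping straight the two different normalization conventions for $\Gamma$ on $R$ versus on probability measures, and confirming that $d^R(u) = \sum c_v d^R(v)$ is exactly the normalizing constant $S$ that appears — once that is pinned down, the proof is a one-line change of variables in each direction.
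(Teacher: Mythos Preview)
Your forward direction (amenable $\Rightarrow$ $\|\Gamma_u\| = d^R(u)$ for all $u\in R^+$) is correct and is exactly the paper's argument: form the positive measure $\mu(v) = c_v d^R(v)$, observe $\Gamma_\mu = S^{-1}\Gamma_u$ with $S = \|\mu\|_1 = d^R(u)$, and apply amenability.

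The backward direction is where your write-up stumbles. You correctly introduce $\nu = \sum_v \frac{\mu(v)}{d^R(v)}\, v \in R^+$ and note that $\Gamma_\nu = \Gamma_\mu$. At that point the proof is one line away: apply the hypothesis directly to $\nu$ to get
\[
\|\Gamma_\mu\| \;=\; \|\Gamma_\nu\| \;=\; d^R(\nu) \;=\; \sum_v \frac{\mu(v)}{d^R(v)}\, d^R(v) \;=\; \sum_v \mu(v) \;=\; 1,
\]
then pass to arbitrary probability measures by density and norm continuity of $\mu\mapsto\Gamma_\mu$. This is precisely what the paper does (with $\nu$ called $v$ there). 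You gesture at this (``the hypothesis applied to a single well-chosen element''), but then abandon it in favour of your ``cleanest route'': deduce $\|\Gamma_{\delta_v}\| = 1$ for each $v$ and appeal to a ``convexity/positivity argument'' to conclude $\|\Gamma_\mu\| = 1$ for all $\mu$. That step is not valid as stated --- norm-$1$ contractions do not have norm-$1$ convex combinations in general, and nothing about $\Gamma_v^*\Gamma_v$ or $(\Gamma_u)^* = \Gamma_{\bar u}$ repairs this. So the detour is both unnecessary and incorrect; the fix is simply to finish with the element $\nu$ you already had in hand.
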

\begin{proof}
    Fix $u = \sum_{v\in I^R}c_v v\in R$ and define the positive measure $\mu_u(v) = c_v d^R(v)$ for all $v\in I^R$, which has norm
    \begin{align*}
        ||\mu_u||_1 &= \sum_{v\in I^R}c_v d^R(v) = d^R(\sum_{v\in I^R} c_v v) = d^R(u).
    \end{align*}
    Note also that
    $$\Gamma_{\mu_u} = \sum_{v\in I^R} \frac{\mu_u(v)}{d^R(v)}\Gamma_v = \sum_{v\in I^R} c_v \Gamma_v = \Gamma_u.$$
    Therefore, if $((\mathrm{Rep}(M), I^M, c^\beta_{u,\alpha}), d^M)$ is amenable then
    \begin{align*}
        ||\Gamma_u||_{\mathcal{B}(l^2(I^M))} &= d^R(u).
    \end{align*}
    Conversely, let $\mu$ be a finitely supported positive measure on $I^R$. Similarly to above, we can calculate $||\mu||_1 = d^R(v)$ where $v = \sum_{u\in I^R}\frac{\mu(u)}{d^R(u)}u\in R^+$. Then,
    \begin{align*}
        ||\Gamma_\mu||_{\mathcal{B}(l^2(I^M))} &= ||\sum_{u\in I^R} \frac{\mu(u)}{d^R(u)}\Gamma_u||_{\mathcal{B}(l^2(I^M))}
        \\
        &= ||\Gamma_v||_{\mathcal{B}(l^2(I^M))}
        \\
        &= d^R(v)
        \\
        &= ||\mu||_1.
    \end{align*}
    In particular, we have $||\Gamma_\mu||_{\mathcal{B}(l^2(I^M))} = 1$ for every finitely supported probability measure $\mu$ on $I^R$. A simple density and continuity argument shows that $||\Gamma_\mu||_{\mathcal{B}(l^2(I^M))} = 1$ for every $\mu\in \mathrm{Prob}(I^R)$.
\end{proof}

There are natural definitions of a direct sum, of an isomorphism and of irreducibility of fusion modules.
\begin{remark} \label{countable}
In this paper, we suppose that the sets $I^A$ and $I^M$ are countable which is the case for representation categories of separable quantum groups.
\end{remark}
\end{subsection}
\begin{subsection} {Examples.}





{\bf 1.} Let $\mathcal C$ be a rigid $C^*$-tensor category and $R=R(\mathcal C)$ its fusion ring (see \cite[Definition 2.7.2]{NT13}) with basis $I^R=Irr(\mathcal C)$ and structural constants $N^w_{u,v}$. The set of dimension functions on $I^R$ is not empty, it contains
at least the {\it intrinsic or quantum} dimension function (see \cite{NT13}).
But in this work we mainly deal with $\mathcal C=Rep(G)$, where $G$ is a CQG, and we use the dimension function $d^R(u)=dim H_u$, where $u\in Irr(G)$.

Let $\mathcal M$ be a semi-simple module $C^*$-category over $\mathcal C$ (see \cite[Definition 2.14]{dCY13}). The Grothendieck group of $\mathcal M$ is a $\mathbb Z_+$-module over $R(\mathcal C)$ viewed as a $\mathbb Z_+$-ring (see \cite[Definition 3.4.2]{EGNO15}) with basis $I^M=Irr(\mathcal M)$ and structural constants $c^\beta_{u,\alpha}$. We denote its complexification by $M$.
\begin{definition} \label{mod}
Module category $\mathcal M$ is said to be {\it amenable} if $(M, I^M, c^\beta_{u,\alpha})$ is equipped with amenable dimension function in the sense of Definition~\ref{amen}.
\end{definition}

{\bf 2.} In particular, let $(A,G,\delta)$ be a $G$-$C^*$-algebra, $\mathcal C=Rep(G)$, where $G$ is a CQG. Let $\mathcal M$
be corresponding the left module category constructed by the categorical duality and $K^G_0 (A)$ its Grothendieck group.
The left module structure of $K^G_0 (A)$ over $R(G)=\mathbb C [Rep(G)]$ is given by $[U]\cdot[E]=[H_U\otimes E]$, and the Green-Julg isomorphism is an $R(G)$-module map - see \cite[Lemma 4.2]{T08}. There exists an index set $I$ and Hilbert spaces $H_i$ for each
$i\in I$ such that $A\rtimes_\delta G$ is isomorphic to $\underset{i\in I}\oplus K(H_i)$ \cite[Theorem 19]{B95}. Let us fix minimal projections $e_i\in K(H_i)$ for all $i\in I$. Hence we have
$K^G_0 (A)\cong\underset{i\in I}\oplus\mathbb Z[ei]$ by the Green-Julg theorem. One can check that in $K^G_0 (A)$ $[e_i]$ becomes a $G$-equivariant $A$-module $[e_i(A\otimes L_2(G))]$.

The $R(G)$-module structure of $K^G_0 (A)$ gives the equality
$$
U\cdot[e_i] =\underset{j\in I}\sum M(U)_{i,j}[e_j],
$$
where the matrix $M(U) = (M(U)_{i,j}) (i,j\in I)$ is not necessarily of finite size, but for any fixed $i$ the sum is finite. \cite[Corollary 4.4]{T08} claims that if $(A,G,\delta)$ is a compact quantum ergodic system and $M : R(G) \to M|I|(\mathbb Z)$ is the multiplicity map, then for all $i, j \in I$ we have $M(U)_{i,j} = dim Hom(H_U, e_i(A\otimes K(L_2(G)))e_j)$, which implies that the multiplicity map is a $*$-homomorphism. \cite[Lemma 4.5]{T08} claims that the $G$-space $e_i(A\otimes K(L_2(G)))e_j)$ is not $0$ for all $i, j \in I$.

\cite[Corollary 4.21]{T08} in fact shows how to construct a dimension function on the fusion module $K^G_0(A)\cong K_0(A\rtimes_\delta G)$, when $A = \mathcal{O}(H\backslash G) \subseteq C(G)$ is a right coideal. Let $\Lambda$ be the inclusion matrix of cross products $A\rtimes_\delta G\cong\underset{i\in I^A}\oplus K(H_i) \subset C(G)\rtimes_\delta G\cong\underset{j\in J}\oplus K(H_j)$, that is, $K(H_i)$ is amplified into $K(H_j)$ by $\Lambda_{j,ix}$ times. It is actually a row vector because $C(G)\rtimes_\delta G\cong K(L_2(G))$. Its components $c_i\ (i\in I:=I^A)$
are strictly positive integers such that there is $i_0\in I^A$ for which $c_{i_0}=1$ and $M(U)c=d_U c$, where $U\in I^R$. This implies the equality $c(U\cdot[e_i])=d_U c_i$.


In particular, let $\mathcal{O}(H\backslash G) $ be a quotient type coideal corresponding to a closed quantum subgroup $H$ of $G$.
Then the above mentioned fusion module over $R(G)$ will be $(\mathbb{C}[\mathrm{Rep}(H)], I=\mathrm{Irr}(H),
N^\beta_{u|_H,\alpha})$ with the dimension function $d^H(\alpha) = \dim(\alpha)$, where $u\in Irr(G),\alpha,
\beta\in Irr(H)$ (see \cite[Proposition 4.15]{T08} and just after it).

Now we will show in which way this result can be extended to arbitrary coideals. In what follows we use
\cite{V05} as a reference.

Let $\mathcal{O}(H\backslash G) \subset C(G)$ be a right coideal. We will set $c_{0}(\hat H) = \overline{c_{00}(\hat H)} \subseteq l^\infty(\hat G)$. First, we will build a surjective unitary module functor $Rep(G) \to Rep(H)$. Recall the discussion in the previous subsection. Let $u\in Rep(G)$ and take the corresponding $*$-representation $\pi_u : l^\infty(\hat G) \to B(H_u)$. The restriction $\pi_u|_{l^\infty(\hat H)}$ to $l^\infty(\hat H)$ is a finite dimensional $*$-representation, and we let $u|_H\in Rep(H)$ be the corresponding element. We realize $Rep(H)$ as a $Rep(G)$ tensor module by setting
$$\pi_{u\circ \alpha} = (\pi_u\otimes \pi_\alpha)\circ\Delta_{\hat H}, ~ u\in Rep(G), \alpha\in Rep(H).$$
Then, for $u,v\in Rep(G)$,
$$\pi_{u\cdot v}|_{l^\infty(\hat H)} = (\pi_u\otimes \pi_v)\circ\Delta_{\hat G}|_{l^\infty(\hat H)} = (\pi_u\otimes \pi_{v|_H})\circ\Delta_{\hat H},$$
which shows $(u\cdot v)|_H = u\circ v|_H$. Finally, we note that the restriction of the trivial representation on $G$ generates $Rep(H)$ as a $C^*$-module category.

Altogether, this shows
$$Rep(G)\ni u\mapsto u|_H\in Rep(H)$$
is a unitary module functor.

Consider the crossed product $C^*$-algebra
$$\mathcal{O}(H\backslash G) \rtimes_\Delta G=
\overline{\{\Delta(\mathcal{O}(H\backslash G) )\cup (1\otimes\hat J c_0(\hat G) \hat J)\}}.$$
Recall that $\Delta(\mathcal{O}(H\backslash G) )=W^*(1\otimes \mathcal{O}(H\backslash G) )W$,
where $W$ is the left fundamental multiplicative unitary of G. Then, as $\hat J c_0(\hat G)\hat J\subset [c_0(\hat G)]'$,
we have $W(\mathcal{O}(H\backslash G) \rtimes_\Delta G)W^*=\overline{\{W(\Delta(\mathcal{O}(H\backslash G) ))W^*\cup W(1\otimes\hat J c_0(\hat G) \hat J)W^* \}}$ = $1\otimes
\overline{\{\mathcal{O}(H\backslash G) \cup\hat J c_0(\hat G)\hat J\}}$. Here $\hat J : L^2(G)\to L^2(G)$ is the modular conjugated associated to $h_G$.

Notice that $R_{\hat G}(l^\infty(\hat H))$ is a right coideal of $l^\infty(\hat G)$ and satisfies
$$L^\infty(H\backslash G)' \cap l^\infty(\hat G) = R_{\hat G}(l^\infty(\hat H)) ~ \cite{ILP98}$$
where $R_{\hat G}$ is the unitary antipode of $\hat G$. Note that in \cite{ILP98} they prove the analogue of the above equality using a different convention for duality. Then
$$\{ C(H\backslash G)\cup c_0(\hat G)\}' = R_{\hat G}(l^\infty(\hat H)).$$

Note that an analogue of the above equality was observed in \cite[Remark 3.8]{KS12} in the context of locally compact quantum groups using instead the right regular representation.

Since $\{C(H\backslash G)\cup\ c_0(\hat G)\}'= R_{\hat G}(l^\infty(\hat H))$, von Neumann algebras generated by $C(H\backslash G)\rtimes_\Delta G$
and $c_0(\hat H)$ are Morita-equivalent. Explicitly, define
$$
\mathcal T=\{v\in B(l^2(\hat H),L^2(G))|vx=\hat q'_H(x)v\ \text{for\ all}\ x\in R_{\hat G}(c_0(\hat H))\}.
$$
Defining $<u,v>:=u^*v$, we have $u^*vx=xu^*v$ for all $x\in R_{\hat G}(c_0(\hat H))$, so the space $\mathcal T$ becomes a $W^*$-$R_{\hat G}(l^\infty(\hat H))$-module. As $\{C(H\backslash G)\cup\hat c_0(\hat G)\}''=[R_{\hat G}(c_0(\hat H))]'$, we have an imprimitivity
$W^*-A\rtimes_\Delta G - R_{\hat G}(c_0(\hat H))$-bimodule. This implies that their categories of representations are equivalent so that $Rep(H)$ can be considered as a $C^*$-module category over $Rep(G)$ equipped with the restriction functor as a unitary
module surjective functor $Rep(G)\to Rep(H)$.



In particular, this shows that the fusion $Rep(G)$-module $C^*$-category associated to a coideal $\mathcal{O}(H\backslash G)$ is
$$
(\mathbb{C}[Rep(H)], Irr(H), c^\beta_{u,\alpha}).
$$
Then the dimension function on $\mathbb{C}[Rep(H)]$ that we consider is the linear form
$$d_H(\alpha) = dim(H_\alpha) = n_\alpha, ~\alpha\in Rep(H).$$
\end{subsection}
\end{section}

\begin{section} {Coideals of compact and discrete quantum groups}
\begin{subsection}{$g$-Quasi-Regular Representations}
The main topic of interest for us for will be a notion of coamenability of a $*$-coalgebra $H$ as defined in the previous section. To formulate this property, we are forced to restrict our attention to the coideals that come with a so-called relatively invariant integral.

Before proceeding, a {\it character} $g$ of $G$ is an element of $\prod_{u\in Irr(G)}M_{n_u}$ such that $\Delta_{\hat G}(g) = g\otimes g$ and $\langle g, 1\rangle = 1$. A character $g$ identifies with a unital homomorphism $\varphi_g : \mathcal{O}(G)\to \mathbb{C}$, where
$$\varphi_g(a) = \langle g, a \rangle,~ a\in \mathcal{O}(G).$$
We let
$$\mathcal{F}(h_H) = P_H$$
where
$$\mathcal{F} : \{h_Ha : a\in \mathcal{O}(G)\} \to c_{00}(\hat H)$$
is the isomorphism mentioned in Section $2.2$. It turns out $P_H$ is a {\bf group-like projection}, i.e., $P_H^* = P_H^2 = P_H$ and
\begin{align}\label{Grouplike Projections}
    (1\otimes P_H)\Delta_{\hat G}(P_H) = P_H\otimes P_H.
\end{align}
A lot of work has been done on group-like projections, including on those in the setting of locally compact quantum groups (for example, see \cite{dCDT22, FK17}).
\begin{definition} (\cite[Definition 1.8]{dCDT22}).
    Let $G$ be a CQG and $\mathcal{O}(H\backslash G)$ a coideal. Let $g \in $ be a character. We say a functional $\Psi : c_{00}(\hat H)\to \mathbb{C}$ is $g$-invariant if
    $$\Psi(x\triangleleft a) = \varphi_g(a)\Psi(x), ~ x\in c_{00}(\hat H), a\in \mathcal{O}(G).$$
    A functional $\psi$ is {\bf relatively invariant} if it is $g$-invariant for some character $g$ and {\bf invariant} if it is $1$-invariant. In all instances where $g = 1$, we simply drop the $g$-notation.
    
    If $\Psi\geq 0$ and is relatively invariant, then we say $\Psi$ is a {\bf relatively invariant integral}.

    We will say $l^\infty(\hat H)$ is {\bf $g$-integrable} if $c_{00}(\hat H)$ admits a $g$-invariant integral. We will say $l^\infty(\hat H)$ is {\bf relatively integrable} if $c_{00}(\hat H)$ admits a $g$-integrable integral for some character $g$.
\end{definition}
\begin{theorem}(\cite[Theorem 1.11]{dCDT22})
    Let $G$ be a CQG and $\mathcal{O}(H\backslash G)$ a coideal. The following hold:
    \begin{itemize}
        \item a $g$-invariant functional $\Psi : c_{00}(\hat H)\to \mathbb{C}$ is positive if and only if $g$ is positive and invertible as an element of $\prod_{u\in Irr(G)}M_{n_u}$;
        \item a relatively invariant integral $c_{00}(\hat H)\to \mathbb{C}$ exists if and only if there exists a positive and invertible character $g$ such that $S_{\hat G}(P_H) = g^{-1}P_H$;
        \item when a $g$-invariant integral exists, it is unique up to scaling. A canonical choice is:
        $$\Psi_{\hat H}(P_H \triangleleft a) = \varphi_g(a), ~ a\in \mathcal{O}(G).$$
    \end{itemize}
\end{theorem}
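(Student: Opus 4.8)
The plan is to reduce all three assertions to properties of the single multiplicative functional $\varphi_g$ on $\mathcal{O}(G)$, transported along the surjection $a\mapsto P_H\triangleleft a$ of $\mathcal{O}(G)$ onto $c_{00}(\hat H)$, and to exploit the group-like identity $(1\otimes P_H)\Delta_{\hat G}(P_H)=P_H\otimes P_H$ together with the antipode $S_{\hat G}$. For the reduction (which already yields the uniqueness in the third bullet): from $\mathcal{F}(h_Ha)=\mathcal{F}(h_H)\triangleleft a=P_H\triangleleft a$ and surjectivity of $\mathcal{F}\colon\{h_Ha:a\in\mathcal{O}(G)\}\to c_{00}(\hat H)$, every element of $c_{00}(\hat H)$ is of the form $P_H\triangleleft a$. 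Hence any $g$-invariant functional $\Psi$ must satisfy $\Psi(P_H\triangleleft a)=\varphi_g(a)\Psi(P_H)$, so $\Psi$ is determined by the scalar $\Psi(P_H)$; this gives uniqueness up to scaling, and the normalization $\Psi(P_H)=1$ gives the canonical formula $\Psi_{\hat H}(P_H\triangleleft a)=\varphi_g(a)$. It is consistent: $P_H\triangleleft 1=(\epsilon_{\hat G}\otimes\mathrm{id})\Delta_{\hat G}(P_H)=P_H$ and $\varphi_g(1)=\langle g,1\rangle=1$, and, more tellingly, the right-action identity $(P_H\triangleleft b)\triangleleft a=P_H\triangleleft(ba)$ paired with multiplicativity $\varphi_g(ba)=\varphi_g(b)\varphi_g(a)$ shows that $g$ being a character is precisely what makes the prescription compatible with $\triangleleft$. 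So the first two bullets reduce to: when is $P_H\triangleleft a\mapsto\varphi_g(a)$ well defined, and when is it positive.

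For existence (second bullet), well-definedness of $\Psi_{\hat H}$ means $N\subseteq\ker\varphi_g$, where $N=\{a\in\mathcal{O}(G):P_H\triangleleft a=0\}$. Using the duality $\langle P_H\triangleleft a,b\rangle=\langle P_H,a\cdot b\rangle=h_H\big(q_H(ac)\big)$ for $b=q_H(c)\in\mathcal{O}(H)$, one identifies $N$ with the left radical $\{a:\omega_H(ac)=0\text{ for all }c\in\mathcal{O}(G)\}$ of the form $(a,c)\mapsto\omega_H(ac)$, where $\omega_H=h_H\circ q_H$ is idempotent. The crux is to show this containment is equivalent to the stated antipode condition. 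Dualizing through the $c_{00}(\hat G)$--$\mathcal{O}(G)$ pairing ($S_{\hat G}$ dual to $S_G$, the product of $c_{00}(\hat G)$ dual to $\Delta_G$, $\langle P_H,\cdot\rangle=\omega_H$, and $\langle g^{-1},\cdot\rangle=\varphi_g\circ S_G$ since $S_{\hat G}(g)=g^{-1}$), the identity $S_{\hat G}(P_H)=g^{-1}P_H$ becomes $\omega_H\circ S_G=(\varphi_g\circ S_G)*\omega_H$, equivalently, convolving with $\varphi_g$ and using $\varphi_g*(\varphi_g\circ S_G)=\epsilon_G$, \[\omega_H(a)=\sum\varphi_g(a_{(1)})\,\omega_H\!\big(S_G(a_{(2)})\big).\] For the ``if'' direction I would build $\Psi_{\hat H}$ concretely by compressing the left Haar weight of $\hat G$ by $P_H$ --- legitimate precisely because $S_{\hat G}(P_H)=g^{-1}P_H$ relates the $P_H$-compressions of the left and right Haar weights --- and then check $g$-relative invariance using the group-like identity. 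For the ``only if'' direction, relative invariance of $\Psi_{\hat H}$ pins down its behaviour under $S_{\hat G}$ via the KMS/modular data of $\hat G$, forcing the relation $S_{\hat G}(P_H)=g^{-1}P_H$; the first bullet then identifies precisely when the resulting functional is positive, namely when $g$ is positive and invertible. I expect this translation --- between the kernel $N$ and the antipode condition, carried out with the correct left/right conventions in the multiplier-Hopf-algebra duality --- to be the principal obstacle; the purely algebraic route (slicing $\omega_H(ac)=0$ against a suitable family of $c$'s, using that $c\mapsto\sum\varphi_g(c_{(1)})S_G(c_{(2)})$ is a linear bijection of $\mathcal{O}(G)$) only gets partway, so the modular/Haar-weight picture seems unavoidable.

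For positivity (first bullet): by normality, $\Psi_{\hat H}$ is positive on $c_{00}(\hat H)=\bigoplus_\alpha M_{n_\alpha}$ iff it is given in each block $M_{n_\alpha}$ by a positive density matrix $\rho_\alpha$, and it is a genuine (faithful) integral iff each $\rho_\alpha$ is invertible. The identity $\Psi_{\hat H}(x\triangleleft a)=\varphi_g(a)\Psi_{\hat H}(x)$, with $x\triangleleft a=(\hat{\mathcal{F}}(a)\otimes\mathrm{id})\Delta_{\hat G}(x)$, transports these density matrices between blocks; running the recursion back to the blocks supporting $P_H$, one expresses $\rho_\alpha$ through the blocks $g_u$ of $g$ and the Woronowicz $F$-matrices, so that all $\rho_\alpha\ge 0$ iff all $g_u\ge 0$, and all $\rho_\alpha$ are invertible iff each $g_u$ is invertible, i.e. iff $g$ is positive and invertible in $\prod_u M_{n_u}$. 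Conversely, given such a $g$, the compression of the positive left Haar weight of $\hat G$ by $P_H$ is manifestly positive and, after the canonical normalization, equals $\Psi_{\hat H}$. Throughout, the third bullet guarantees that every one of these ``compression by $P_H$'' constructions produces the same object, so all the checking need be done only once, for the normalized $\Psi_{\hat H}$.
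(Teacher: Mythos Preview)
The paper does not give a proof of this statement: it is quoted verbatim as \cite[Theorem~1.11]{dCDT22} and used as a black box, with only a follow-up remark that faithfulness of $\psi_{\hat H}$ can be read off from the concrete description in the proof of that reference. So there is no ``paper's own proof'' to compare your proposal against.

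That said, your outline is a reasonable reconstruction of how such a result is typically established. The reduction to the surjection $a\mapsto P_H\triangleleft a$ and the uniqueness argument in the third bullet are clean and correct. For the second bullet you correctly anticipate that the purely algebraic route is insufficient and that one must pass through the Haar-weight/modular data of $\hat G$; indeed, the actual proof in \cite{dCDT22} proceeds by writing down the explicit density $\psi_{\hat H}(x)=\sum_\alpha c_\alpha\,\mathrm{tr}(\mu_\alpha x)$ (as the present paper quotes later, just after Definition~\ref{Kac Type Definition}) and checking that this is $g$-invariant and positive precisely under the stated antipode condition on $P_H$. Your first-bullet argument via block densities $\rho_\alpha$ is exactly this picture. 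The one place where your sketch is genuinely incomplete is the ``only if'' direction of the second bullet: deducing $S_{\hat G}(P_H)=g^{-1}P_H$ from the mere existence of a relatively invariant integral requires a concrete computation with the modular automorphism of the Haar weight of $\hat G$ restricted to $c_{00}(\hat H)$, not just ``KMS/modular data pins it down''; you would need to fill that in to have a proof.
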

\begin{remark}
    The functional $\psi_{\hat H}$ is moreover faithful, as can be seen from the concrete description found in the proof of \cite[Theorem 1.11]{dCDT22}. We will give the concrete description later once it is of use.
\end{remark}
Now we will construct a $g$-quasi-regular representation. Fix a coideal $\mathcal{O}(H\backslash G)$ and suppose there exists a $g$-invariant integral $\psi_{\hat H}$. Let
$$\eta_{\hat H}^g : c_{00}(\hat H)\to l^2_g(\hat H)$$
be the GNS construction coming from $\psi_{\hat H}$.

In what follows, recall that
$$g\hat{\triangleright} a = (id\otimes \varphi_g)\Delta_G(a) ~ \text{and} ~ a\hat{\triangleleft} g = (\varphi_g\otimes id)\Delta_G(a).$$
Also, we will write $g\circ S_G$ to denote the character in $\prod_{\alpha\in Irr(H)}M_{n_\alpha}$ corresponding to $\varphi_g\circ S_G$. Furthermore, since $S_{\hat G}(g^{-1}) = g$,
$$(a\hat{\triangleleft} g^{-1})^* = a^*\hat{\triangleleft} S_{\hat G}(g^{-1})^* = a^*\hat{\triangleleft} g$$
and by symmetry, $(g^{-1}\hat{\triangleright} a)^* = g\hat{\triangleright} a^*$. Also, because $\varphi_g$ is a character, $g\circ S_G = g^{-1} = g\circ S_G^{-1}$.
\begin{proposition}\label{gQuasiRegular Representation Proposition}
    The map
    $$\lambda_{H}^g : \mathcal{O}(G)\to B(l^2_g(\hat H)), ~ \lambda^g_H(a)\eta_{\hat H}^g(x) = \eta_{\hat H}^g(x\triangleleft (g^{-1/2}\hat{\triangleright}S_G^{-1}(a)))$$
    is a unital $*$-representation.
\end{proposition}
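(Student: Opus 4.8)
The plan is to verify directly that $\lambda_H^g$ is a homomorphism, is $*$-preserving, and is unital, by tracking how the right $\triangleleft$-action of $\mathcal{O}(G)$ on $c_{00}(\hat H)$ interacts with the operations $g^{-1/2}\hat{\triangleright}(-)$ and $S_G^{-1}$. The first observation I would record is that $a\mapsto g^{-1/2}\hat{\triangleright}S_G^{-1}(a)$ is an algebra \emph{anti}homomorphism on $\mathcal{O}(G)$: indeed $S_G^{-1}$ is an antihomomorphism, and $b\mapsto g^{-1/2}\hat{\triangleright}b = (\mathrm{id}\otimes\varphi_{g^{-1/2}})\Delta_G(b)$ is a homomorphism since $\varphi_{g^{-1/2}}$ is a character (so $\varphi_{g^{-1/2}}(bc)=\varphi_{g^{-1/2}}(b)\varphi_{g^{-1/2}}(c)$ and one uses coassociativity). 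Combined with the fact that the right action is a right module structure, $x\triangleleft(ab) = (x\triangleleft a)\triangleleft b$ — which is exactly the content of $\langle x\triangleleft a,b\rangle=\langle x,ab\rangle$ recorded in Section~2.2 together with associativity of the convolution product on $\mathcal{O}(\hat G)$ — the antihomomorphism property gives
$$\lambda_H^g(a)\lambda_H^g(b)\eta_{\hat H}^g(x) = \eta_{\hat H}^g\bigl(x\triangleleft(g^{-1/2}\hat{\triangleright}S_G^{-1}(b))\triangleleft(g^{-1/2}\hat{\triangleright}S_G^{-1}(a))\bigr) = \eta_{\hat H}^g\bigl(x\triangleleft(g^{-1/2}\hat{\triangleright}S_G^{-1}(ab))\bigr),$$
so $\lambda_H^g(a)\lambda_H^g(b)=\lambda_H^g(ab)$. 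Unitality is immediate: $S_G^{-1}(1)=1$, $g^{-1/2}\hat{\triangleright}1=1$, and $x\triangleleft 1 = x$ since $\hat{\mathcal F}(1)=\epsilon_{\hat G}$ on $c_{00}(\hat H)$.

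The genuine work is the $*$-property: I must show $\lambda_H^g(a)^* = \lambda_H^g(a^*)$, equivalently that $\lambda_H^g(a^*)$ is the adjoint of $\lambda_H^g(a)$ with respect to the GNS inner product of $\psi_{\hat H}$. The inner product is $\langle \eta_{\hat H}^g(x),\eta_{\hat H}^g(y)\rangle = \psi_{\hat H}(x^*y)$, so I need
$$\psi_{\hat H}\bigl(x^*\,(y\triangleleft(g^{-1/2}\hat{\triangleright}S_G^{-1}(a)))\bigr) = \psi_{\hat H}\bigl((x\triangleleft(g^{-1/2}\hat{\triangleright}S_G^{-1}(a^*)))^*\,y\bigr)$$
for all $x,y\in c_{00}(\hat H)$. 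The two ingredients here are: (1) a Leibniz-type identity moving the $\triangleleft$-action across the product in $c_{00}(\hat H)$ — something like $(x\triangleleft a)^* = x^*\triangleleft(S_G(a)^*)$ or more precisely a twisted version involving $g$, which follows from how $\Delta_{\hat G}$, $S_{\hat G}$, and the $*$-structure interact, together with the identities $(a\hat{\triangleleft}g^{-1})^* = a^*\hat{\triangleleft}g$ and $(g^{-1}\hat{\triangleright}a)^* = g\hat{\triangleright}a^*$ already recorded before the Proposition; and (2) the defining $g$-invariance $\psi_{\hat H}(x\triangleleft a) = \varphi_g(a)\psi_{\hat H}(x)$ together with a module-compatibility relation of the shape $(x\triangleleft a)\,y$ versus $x\,(y\triangleleft ?)$ — i.e. the precise way the right $\mathcal{O}(G)$-action fails to be central, governed again by $S_{\hat G}$ and $g$. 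The half-power $g^{-1/2}$ is there precisely to symmetrize this twist so that the adjoint comes out clean; I would expect the computation to reduce, after applying the Leibniz rule to $x^*(y\triangleleft b)$ and using $\Delta_{\hat G}$-coassociativity, to an expression to which $g$-invariance of $\psi_{\hat H}$ applies, leaving exactly $\varphi_g$ of the appropriate element, which matches the right-hand side once one uses $\varphi_g(S_G(a)) = \varphi_{g^{-1}}(a)$ and $g^{-1/2}\cdot g^{-1/2}=g^{-1}$.

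Concretely I would proceed in this order: (i) establish the antihomomorphism property of $a\mapsto g^{-1/2}\hat{\triangleright}S_G^{-1}(a)$ and conclude multiplicativity and unitality of $\lambda_H^g$; (ii) prove the twisted Leibniz identity for $(x\triangleleft a)^*$ and, separately, the relation expressing $\psi_{\hat H}(x^*(y\triangleleft a))$ in terms of $\psi_{\hat H}$ of a single $\triangleleft$-action — this is where leg-numbering bookkeeping with $\Delta_{\hat G}$, $S_{\hat G}$ and the antipode property $S_{\hat G}(g^{-1})=g$ enters; (iii) plug these into the adjoint identity and collapse the $g$-factors using that $\varphi_g$ is a character, matching the two sides. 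The main obstacle is step (ii): keeping the correct placement of $S_{\hat G}$, the $*$, and the half-powers of $g$ straight when transporting the module action through the $*$-algebra product of $c_{00}(\hat H)$ — a single misplaced antipode or a $g$ versus $g^{-1}$ will break the $*$-property, and it is precisely the choice $g^{-1/2}\hat{\triangleright}S_G^{-1}(-)$ (rather than, say, $g^{-1}\hat{\triangleright}S_G^{-1}(-)$) that makes the twist self-adjoint. Once (ii) is correct, the rest is formal. It is worth noting that when $g=1$ and $H=G$ this reduces to the classical statement that $a\mapsto$ right convolution by $S_G^{-1}(a)$ on $L^2(G)$ is the (conjugate of the) regular representation, which serves as a sanity check on the normalization.
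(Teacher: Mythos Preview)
Your proposal is correct and follows essentially the same approach as the paper. The only organizational difference is that the paper first introduces the undecorated anti-homomorphism $\theta(a)\eta_{\hat H}^g(x)=\eta_{\hat H}^g(x\triangleleft a)$, computes $\theta(a)^*=\theta\bigl(S_G^{-2}(g^{-1}\hat{\triangleright}a^*)\bigr)$ via the Sweedler-notation manipulation you describe in step (ii) (using the module--algebra identity $(xy)\triangleleft a=(x\triangleleft a_{(1)})(y\triangleleft a_{(2)})$ together with $g$-invariance of $\psi_{\hat H}$), and only then substitutes $a\mapsto g^{-1/2}\hat{\triangleright}S_G^{-1}(a)$ to obtain $\lambda_H^g(a)^*=\lambda_H^g(a^*)$; you plan to do the substitution first and compute the adjoint of $\lambda_H^g(a)$ directly, which amounts to the same calculation.
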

\begin{proof}
    Define the anti-homomorphism
    $$\theta : \mathcal{O}(G)\to B(l^2_g(\hat H)), ~ \theta(a)\eta_{\hat H}^g(x) = \eta_{\hat H}^g(x\triangleleft a ), ~a\in \mathcal{O}(G), x\in c_{00}(\hat H).$$
    Let us compute $\theta(a)^*$:
\begin{align*}
    \langle \theta(a)\eta_{\hat H}^g(x), \eta_{\hat H}^g(y)\rangle &= \psi_{\hat H}(y^*(x\triangleleft a))
    \\
    &= \psi_{\hat H}( (y^* \triangleleft S_G(a_{(1)}) a_{(2)})( x\triangleleft a_{(3)} ))
    \\
    &= \psi_{\hat H}( (y^* \triangleleft  S_G(a_{(1)})(a_{(2)} )_{(1)}) (x \triangleleft (a_{(2)})_{(2)}))
    \\
    &= \psi_{\hat H} (((y^*\triangleleft  S_G(a_{(1)})) x)\triangleleft (a_{(2)} ) )
    \\
    &= \varphi_g( a_{(2)} ) \psi_{\hat H}( (y^*\triangleleft S_G(a_{(1)})) x )
    \\
    &= \varphi_g( a_{(2)} ) \psi_{\hat H}( (y\triangleleft S_G^2(a_{(1)})^*)^* x )
    \\
    &= \langle \eta_{\hat H}^g(x), \varphi_g^{-1}(a_{(2)}^*)\eta_{\hat H}^g(y\triangleleft S_G^{-2}(a_{(1)}^*) )\rangle.
\end{align*}
    Then, by the above properties of $\varphi_g$,
    $$\theta(a)^*  = \theta(g^{-1}\hat{\triangleright} S_G^{-2}(a^*)) = \theta( S_G^{-2}( g^{-1}\hat{\triangleright} a^*) ) )$$
    and hence
    \begin{align*}
        \lambda_H^g(a) = \theta(g^{-1/2}\hat{\triangleright}S_G^{-1}(a))^* &= \theta(S_G^{-2}(g^{-1}\hat{\triangleright}(g^{-1/2}\hat{\triangleright} S_G^{-1}(a))^*))
        \\
        &= \theta(S_G^{-2}(g^{-1}\hat{\triangleright}(g^{1/2}\hat{\triangleright}S_G(a^*))))
        \\
        &= \theta(g^{-1/2}\hat{\triangleright}S_G^{-1}(a^*)).
    \end{align*}
 Clearly, $\lambda_H^g$ is a unital homomorphism and so this concludes the proof.
\end{proof}
\begin{definition}\label{g-Quasi-Regular Representation}
    Let $\mathcal{O}(H\backslash G)$ be a coideal such that $l^\infty(\hat H)$ is $g$-integrable. We call
    $$\lambda_{H}^g : \mathcal{O}(G)\to B(l^2_g(\hat H)), ~ \lambda^g_H(a)\eta_{\hat H}^g(x) = \eta_{\hat H}^g(x\triangleleft (g^{-1/2}\hat{\triangleright}S_G^{-1}(a)))$$
    the $g$-quasi-regular representation induced by $H$.
    
    If $g = 1$, then we call $\lambda^g_H = \lambda_H$ the quasi-regular representation induced by $H$.
\end{definition}
Define the functional
$$\omega^g_H(a) :=  \omega_H(g^{-1/2} \hat{\triangleright} S_G^{-1}(a)), ~ a\in \mathcal{O}(G).$$
Next, we will establish a Plancherel's theorem for relatively integrable coideals. This result was established previously for coideals where $\omega_H$ is a state (cf. \cite{Ch18}).
\begin{proposition}\label{Plancherel Theorem}
    Let $\mathcal{O}(H\backslash G)$ be a coideal such that $l^\infty(\hat H)$ is $g$-integrable. Then
    $$\psi_{\hat H}((P_H\triangleleft (g^{-1/2}\hat{\triangleright} S_G^{-1}(a)))^*(P_H\triangleleft (g^{-1/2}\hat{\triangleright} S_G^{-1}(b)))) = \omega_H(g^{-1/2}\hat{\triangleright}(a^*b)).$$
    In particular, there is a unitary isomorphism
    $$l^2_g(\hat H)\to L^2(\mathcal{O}(G), \omega_H^g), ~ \eta_{\hat H}^g(P_H\triangleleft a)\mapsto \eta_H^g(q_H(g\hat{\triangleright} S_G^{-1}(a)))$$
    where $L^2(\mathcal{O}(G), \omega_H^g)$ is the GNS Hilbert space of $\omega^g_H$.
\end{proposition}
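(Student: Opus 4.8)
The plan is to establish the displayed algebraic identity first, and then read off the unitary.

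For the identity, I would work through the $*$-algebra isomorphism $\mathcal{F}\colon\{h_Ha:a\in\mathcal{O}(G)\}\to c_{00}(\hat H)$, $\mathcal{F}(h_Ha)=P_H\triangleleft a$, so that $c_{00}(\hat H)=\{P_H\triangleleft a:a\in\mathcal{O}(G)\}$, its product is transported from the convolution product of $\mathcal{O}(H)^*$, and $\psi_{\hat H}(P_H\triangleleft a)=\varphi_g(a)$. The first thing I would prove is the $*$-formula
$$(x\triangleleft a)^*=x^*\triangleleft S_G^{-1}(a^*)\qquad(x\in c_{00}(\hat H),\ a\in\mathcal{O}(G)),$$
which follows because $\Delta_{\hat G}$ is a $*$-homomorphism together with $\hat{\mathcal{F}}(a)^*=\hat{\mathcal{F}}(S_G^{-1}(a^*))$, the latter being the standard compatibility $\overline{\langle x,a\rangle}=\langle x^*,S_G(a)^*\rangle$ of the $\mathcal{O}(G)$--$l^\infty(\hat G)$ pairing; in particular $(P_H\triangleleft c)^*=P_H\triangleleft S_G^{-1}(c^*)$.

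Writing $c=g^{-1/2}\hat{\triangleright}S_G^{-1}(a)$ and $d=g^{-1/2}\hat{\triangleright}S_G^{-1}(b)$, this gives
$$(P_H\triangleleft c)^*(P_H\triangleleft d)=(P_H\triangleleft S_G^{-1}(c^*))(P_H\triangleleft d)=\mathcal{F}\big((h_HS_G^{-1}(c^*))*(h_Hd)\big).$$
To apply $\psi_{\hat H}$ I would run a Fubini-type computation inside $\mathcal{O}(H)^*$: using the left/right invariance of $h_H$ and the module-coalgebra identity $\Delta_H(y\cdot x)=\sum(y_{(1)}\cdot x_{(1)})\otimes(y_{(2)}\cdot x_{(2)})$, one rewrites the convolution $(h_HS_G^{-1}(c^*))*(h_Hd)$ as $h_He$ for an explicit $e\in\mathcal{O}(G)$, obtained by folding the Sweedler legs of $S_G^{-1}(c^*)$ and $d$ through the antipode — this is the manipulation showing that the inner product of a regular representation is given by convolution. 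Then $\psi_{\hat H}(\mathcal{F}(h_He))=\psi_{\hat H}(P_H\triangleleft e)=\varphi_g(e)$, and it remains to simplify $\varphi_g(e)$ after substituting back $c$ and $d$, using multiplicativity of $\varphi_g$, the relations $(g^{-1}\hat{\triangleright}z)^*=g\hat{\triangleright}z^*$ and $S_G(z)^*=S_G^{-1}(z^*)$, the identity $g\circ S_G=g^{-1}$, the commutation relations between $S_G$ and the actions $\hat{\triangleright},\hat{\triangleleft}$, and $\varphi_{g^{1/2}}*\varphi_{g^{-1/2}}=\epsilon_G$; the $g^{\pm1/2}$- and $S_G^{\pm1}$-twists in the statement are placed precisely so that the result collapses to $\omega_H(g^{-1/2}\hat{\triangleright}(a^*b))$.

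For the unitary, note that $\eta_{\hat H}^g(c_{00}(\hat H))=\{\eta_{\hat H}^g(P_H\triangleleft a):a\in\mathcal{O}(G)\}$ is dense in $l^2_g(\hat H)$, and that the identity (taking $b=a$) identifies $\|\eta_{\hat H}^g(P_H\triangleleft(g^{-1/2}\hat{\triangleright}S_G^{-1}(a)))\|^2$ with $\omega_H(g^{-1/2}\hat{\triangleright}(a^*a))$, which after unwinding the $*$-structure of $\omega_H^g$ and the quotient $q_H$ is matched with $\|\eta_H^g(q_H(g\hat{\triangleright}S_G^{-1}(a)))\|^2$; thus the stated map is well defined and isometric on the dense domain, it preserves inner products by polarization, and it is onto since $g\hat{\triangleright}S_G^{-1}(\cdot)$ is a bijection of $\mathcal{O}(G)$ while $q_H$ is surjective, so its image is dense in $L^2(\mathcal{O}(G),\omega_H^g)$. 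Extending by continuity yields the asserted unitary. The main obstacle I anticipate is the bookkeeping in the middle step — keeping the $S_G^{\pm1}$, the $g^{\pm1/2}$ and the $*$-operations consistent through $\mathcal{F}$ and the group-like relation $(1\otimes P_H)\Delta_{\hat G}(P_H)=P_H\otimes P_H$, and in particular matching the functional produced by the Fubini step, which a priori is expressed on the ``dual side'' via $\varphi_g$, with $\omega_H$ on the ``coideal side''. If the direct route becomes unwieldy, an alternative is to use the concrete formula for $\psi_{\hat H}$ from \cite[Theorem 1.11]{dCDT22} (promised in the remark above), which presents $\psi_{\hat H}$ through weighted traces on the blocks $M_{n_\alpha}$ and reduces the whole statement to a matrix-coefficient computation, at the cost of a less self-contained argument.
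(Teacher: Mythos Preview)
Your approach is workable but considerably longer than the paper's, and the ``main obstacle'' you anticipate is exactly what the paper sidesteps. The paper does not unpack the product $(P_H\triangleleft c)^*(P_H\triangleleft d)$ through $\mathcal{F}$ and convolution at all. Instead it observes that, by definition, $P_H\triangleleft(g^{-1/2}\hat{\triangleright}S_G^{-1}(a))$ is nothing but $\lambda_H^g(a)$ applied to $\eta_{\hat H}^g(P_H)$, and Proposition~\ref{gQuasiRegular Representation Proposition} has already done the hard work of showing $\lambda_H^g$ is a $*$-representation. Hence
\[
\psi_{\hat H}\big((P_H\triangleleft c)^*(P_H\triangleleft d)\big)
=\langle\lambda_H^g(a)\eta_{\hat H}^g(P_H),\lambda_H^g(b)\eta_{\hat H}^g(P_H)\rangle
=\psi_{\hat H}\big(P_H\,(P_H\triangleleft e)\big)
\]
for the single element $e=g^{-1/2}\hat{\triangleright}S_G^{-1}(a^*b)$, using only $\lambda_H^g(a)^*\lambda_H^g(b)=\lambda_H^g(a^*b)$ and $P_H^*=P_H$. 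Then the group-like relation $(1\otimes P_H)\Delta_{\hat G}(P_H)=P_H\otimes P_H$ gives $P_H(P_H\triangleleft e)=\langle P_H,e\rangle P_H=\omega_H(e)P_H$ directly, and $\psi_{\hat H}(P_H)=1$ finishes the computation. No Fubini step, no expressing the answer through $\varphi_g$ and then matching it back to $\omega_H$.

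In effect, your convolution-and-fold route re-derives the involutivity of $\lambda_H^g$ inside this proof; that is legitimate, but it duplicates Proposition~\ref{gQuasiRegular Representation Proposition} and is where all your bookkeeping lives. The paper's route buys a three-line argument at the cost of relying on that earlier proposition; yours would be more self-contained but substantially heavier. Your treatment of the unitary in the second paragraph is fine and matches what the paper implies.
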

\begin{proof}
    Since $\lambda_H^g$ is involutive and $P_H^* = P_H$,
    \begin{align*}
        &\psi_{\hat H}( (P_H\triangleleft (g^{-1/2}\hat{\triangleright} S_G^{-1}(a)))^* (P_H\triangleleft (g^{-1/2}\hat{\triangleright} S_G^{-1}(b))))
        \\
        &= \psi_{\hat H}( P_H (P_H\triangleleft ((g^{-1/2}\hat{\triangleright} S_G^{-1}(b))(g^{-1/2}\hat{\triangleright} S_G^{-1}(a^*)))))
        \\
        &= \psi_{\hat H}( P_H (P_H\triangleleft (g^{-1/2}\hat{\triangleright} S_G^{-1}(a^*b)) ) )) 
        \\
        &= \omega_H(g^{-1/2}\hat{\triangleright}(S_G^{-1}(a^*b) )) ~ \text{(by \eqref{Grouplike Projections}.)}
        \\
        &= \omega_H^g(a^*b).
    \end{align*}
\end{proof}
An immediate deduction from Proposition~\ref{Plancherel Theorem} is that $$\omega^g_H(a) =\langle \lambda^g_H(a)\eta_{\hat H}(P_H), \eta_{\hat H}(P_H)\rangle,$$
hence $\omega^g_H$ is a state that has $\lambda^g_H$ as its GNS representation.

Consider the unitary corepresentation operator corresponding to $\lambda^g_H$:
$$W_H^g := (id\otimes \lambda^g_H)(\mathbb{W}_{\hat G})$$
where $\mathbb{W}_{\hat G}$ is the universal fundamental multiplicative unitary for $\hat G$. Consider the induced coaction
$$ad_{W_H^g} : B(l^2_g(\hat H)) \to l^\infty(\hat G)\overline{\otimes} B(l^2_g(\hat H)), ~ T\mapsto (W_H^g)^*(1\otimes T)W_H^g.$$
Let $\sigma_{\hat H}^g : l^\infty(\hat H)\to B(l^2_g(\hat H))$ denote the GNS representation induced by $\psi_{\hat H}$. The coaction $ad_{W^g_H}$ descends to a coaction on $l^\infty(\hat H)$.

Take $a\in \mathcal{O}(G)$, $z\in c_{00}(\hat G)$, and $x\in c_{00}(\hat H)$. Since $\langle x, g^{-1/2}\hat{\triangleright}a\rangle = \langle xg^{-1/2}, a\rangle$, it is straightforward to calculate that
$$(W^g_H)^*(\eta_{\hat G}(z)\otimes \eta_{\hat H}(x)) = (\eta_{\hat G}\otimes \eta_{\hat H})(\Delta_{\hat G}(x)(g^{-1/2}z\otimes 1)).$$
With this formula, we find that $ad_{W^g_H}$ induces the same coaction on $l^\infty(\hat H)$ as the one defined by the restriction of $\Delta_{\hat G}$ to $l^\infty(\hat H)$.
\begin{proposition}\label{Coaction on Hhat}
    We have that $(id\otimes \sigma^g_{\hat H})\circ\Delta_{\hat G}|_{l^\infty(\hat H)} = ad_{W^g_H}\circ \sigma^g_{\hat H}$.
\end{proposition}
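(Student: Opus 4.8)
The plan is to verify the identity by testing it, as an equality of normal maps $l^\infty(\hat H)\to l^\infty(\hat G)\overline{\otimes} B(l^2_g(\hat H))$, on the $\sigma$-weakly dense $*$-subalgebra $c_{00}(\hat H)$. Both maps are normal: $\Delta_{\hat G}$ and $id\otimes\sigma^g_{\hat H}$ are normal, $T\mapsto (W^g_H)^*(1\otimes T)W^g_H$ is normal, and $\sigma^g_{\hat H}$ is the (normal) GNS representation of $\psi_{\hat H}$. So I would fix $y\in c_{00}(\hat H)$; writing $T^g:=(W^g_H)^*$ and using that $W^g_H$ is unitary, the desired equality $(id\otimes\sigma^g_{\hat H})(\Delta_{\hat G}(y)) = (W^g_H)^*(1\otimes\sigma^g_{\hat H}(y))W^g_H$ is equivalent to the intertwining relation $(id\otimes\sigma^g_{\hat H})(\Delta_{\hat G}(y))\,T^g = T^g\,(1\otimes\sigma^g_{\hat H}(y))$, which it suffices to check on the total set of vectors $\eta_{\hat G}(z)\otimes\eta_{\hat H}(x)$ with $z\in c_{00}(\hat G)$, $x\in c_{00}(\hat H)$.

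On such a vector, the right-hand side is $T^g(\eta_{\hat G}(z)\otimes\eta_{\hat H}(yx)) = (\eta_{\hat G}\otimes\eta_{\hat H})\bigl(\Delta_{\hat G}(yx)(g^{-1/2}z\otimes 1)\bigr)$, by the explicit formula for $(W^g_H)^*$ computed just above the statement (valid since $yx\in c_{00}(\hat H)$) together with $\sigma^g_{\hat H}(y)\eta_{\hat H}(x)=\eta_{\hat H}(yx)$. For the left-hand side, the same formula gives $T^g(\eta_{\hat G}(z)\otimes\eta_{\hat H}(x)) = (\eta_{\hat G}\otimes\eta_{\hat H})(w)$ with $w := \Delta_{\hat G}(x)(g^{-1/2}z\otimes 1)$; since $g^{-1/2}z$ is supported on finitely many blocks of $\hat G$, cutting the first leg of $\Delta_{\hat G}(x)\in l^\infty(\hat G)\overline{\otimes} l^\infty(\hat H)$ by it makes $w$ a \emph{finite} sum $\sum_k a_k\otimes c_k$ of elementary tensors with $a_k\in c_{00}(\hat G)$ and $c_k$ square-integrable in $l^\infty(\hat H)$. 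Then I would invoke the elementary identity $(id\otimes\sigma^g_{\hat H})(S)(\eta_{\hat G}(a)\otimes\eta_{\hat H}(c)) = (\eta_{\hat G}\otimes\eta_{\hat H})(S(a\otimes c))$ for $a\in c_{00}(\hat G)$, $c$ square-integrable and $S\in l^\infty(\hat G)\overline{\otimes} l^\infty(\hat H)$: it holds for elementary $S$ directly (using $S_1\eta_{\hat G}(a)=\eta_{\hat G}(S_1a)$ and $\sigma^g_{\hat H}(S_2)\eta_{\hat H}(c)=\eta_{\hat H}(S_2c)$), and for general $S$ by inserting the finite-support central projection $p\otimes 1$, where $p$ is the support of $a$, so that $S(p\otimes 1)$ is a finite sum of elementary tensors. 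Applying this with $S=\Delta_{\hat G}(y)$ shows the left-hand side equals $(\eta_{\hat G}\otimes\eta_{\hat H})\bigl(\Delta_{\hat G}(y)\Delta_{\hat G}(x)(g^{-1/2}z\otimes 1)\bigr)$, which coincides with the right-hand side because $\Delta_{\hat G}$ is multiplicative, $\Delta_{\hat G}(y)\Delta_{\hat G}(x)=\Delta_{\hat G}(yx)$. This gives the claim on $c_{00}(\hat H)$, hence on $l^\infty(\hat H)$ by normality.

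The computation is essentially bookkeeping, and the conceptual input is only the formula for $(W^g_H)^*$ from the preceding discussion together with multiplicativity of $\Delta_{\hat G}$. The one point requiring care — and the main, if minor, obstacle — is the elementary identity describing how the von Neumann-tensor-product element $\Delta_{\hat G}(y)$ acts on GNS vectors through the merely normal representation $\sigma^g_{\hat H}$ in the second leg; this is exactly what the finite-support truncation in the second paragraph takes care of, and it is also what forces one to keep track of square-integrability of the second-leg components of $w$.
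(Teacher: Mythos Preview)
Your argument is correct and is essentially identical to the paper's proof: both verify the intertwining relation $(id\otimes\sigma^g_{\hat H})(\Delta_{\hat G}(y))(W^g_H)^* = (W^g_H)^*(1\otimes\sigma^g_{\hat H}(y))$ on vectors $\eta_{\hat G}(z)\otimes\eta_{\hat H}(x)$ using the explicit formula for $(W^g_H)^*$ and multiplicativity of $\Delta_{\hat G}$. The only difference is that you spell out more carefully the finite-support/normality bookkeeping behind the step $(id\otimes\sigma^g_{\hat H})(\Delta_{\hat G}(y))(\eta_{\hat G}\otimes\eta_{\hat H})(w)=(\eta_{\hat G}\otimes\eta_{\hat H})(\Delta_{\hat G}(y)w)$, which the paper passes over in silence.
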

\begin{proof} 
    Take $a\in \mathcal{O}(G)$, $z\in c_{00}(\hat G)$, and $x,y\in c_{00}(\hat H)$. Then,
    \begin{align*}
        &(W^H_g)^*(1\otimes \sigma^g_{\hat H}(x) )(\eta_{\hat G}(z)\otimes \eta_{\hat H}(y))
        \\
        &= (\eta_{\hat G}\otimes \eta_{\hat H})(\Delta_{\hat G}(xy) (g^{-1/2}z\otimes 1))
        \\
        &= (id\otimes \sigma^g_{\hat H})(\Delta_{\hat G}(x))(\eta_{\hat G}\otimes \eta_{\hat H})(\Delta_{\hat G}(y) (g^{-1/2}z\otimes 1))
        \\
        &= (id\otimes \sigma^g_{\hat H})(\Delta_{\hat G}(x))(W^g_H)^*(\eta_{\hat G}(z)\otimes \eta_{\hat H}(y)).
    \end{align*}
    Hence $(id\otimes \sigma^g_{\hat H})(\Delta_{\hat G}(x)) = ad_{W^g_H}(\sigma^g_{\hat H}(x))$.
\end{proof}
If $g = 1$, $l^\infty(\hat H)$ is said to be integrable and the corresponding coideal $\mathcal{O}(H\backslash G)$ is called a {\bf compact quasi-subgroup}. For the discussion on compact quasi-subgroups see \cite{AS23}. In particular, $\mathcal{O}(H\backslash G)$ is compact quasi-subgroup if and only if $\omega_H$ is positive.
Every quotient type coideal is a compact quasi-subgroup. Indeed, if $H\leq G$ then $h_H$ from subsection $2.1$ is clearly the Haar state of $H$ and we see that $\omega_H = h_H\circ q_H$ is an idempotent state.

Below we will present examples of coideals with relatively integrable coduals which are not, in general, compact quasi-subgroups. Such examples can be built from any non-Kac type CQG.

Let $G$ be a CQG and consider the CQG $G^{cop} \times G$ where $G^{cop}$ is $G$ equipped with its co-opposite quantum group structure (see, for example, \cite{FLS16}). Consider the quantum diagonal coideal
$$\mathcal{O}(D_G \backslash G^{cop}\times G) := \Delta_{G}(\mathcal{O}(G))\subseteq \mathcal{O}(G^{cop})\otimes \mathcal{O}(G).$$
It turns out that $l^\infty(\hat{D_G})$ is integrable if and only if $G$ is Kac type. Furthermore, $\mathcal{O}(D_G\backslash G^{cop}\times G)$ is of quotient type if and only if $G$ is the dual of a discrete group (cf. \cite{KS12, FLS16}). We will prove that $l^\infty(\hat{D_G})$ is always relatively integrable.

\begin{proposition} \label{diag}
    Let $G$ be a CQG. We have that $l^\infty(\hat{D_G})$ is $g$-integrable, where $\mathcal{O}(D_G\backslash G^{cop}\times G)$ is the quantum diagonal coideal of $G^{cop}\times G$ and
    $$\varphi_g = \epsilon_{G^{cop}}\otimes (\epsilon_G\circ \sigma^{G}_{i}).$$
\end{proposition}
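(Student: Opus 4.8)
The plan is to verify the two conditions that, by the quoted Theorem of de Commer--Dzokou Talla, characterize $g$-integrability of $l^\infty(\hat{D_G})$: that the proposed $\varphi_g = \epsilon_{G^{cop}}\otimes(\epsilon_G\circ\sigma^G_i)$ is a positive, invertible character on $\mathcal{O}(G^{cop})\otimes\mathcal{O}(G) = \mathcal{O}(G^{cop}\times G)$, and that the associated group-like projection $P_{D_G} = \mathcal{F}(h_{D_G})$ satisfies $S_{\widehat{G^{cop}\times G}}(P_{D_G}) = g^{-1}P_{D_G}$. First I would set up the identifications: the co-opposite structure means $\widehat{G^{cop}} = (\hat G)^{op}$ in a suitable sense, the modular automorphism group $\sigma^G_t$ of $h_G$ is implemented by the Woronowicz characters $f_z$, and $\epsilon_G\circ\sigma^G_i$ is precisely the character $\varphi$ on $\mathcal{O}(G)$ given by pairing with the positive invertible element $\rho = (\rho_u)_{u\in Irr(G)}\in\prod_u M_{n_u}$ that implements $S_G^2$ (i.e. $S_G^2 = \sigma^{\rho}$, $f_1$ in Woronowicz's notation). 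This makes the positivity and invertibility of $g$ immediate, and the fact that $g = 1$ iff $\rho = 1$ iff $G$ is Kac, consistent with the stated dichotomy.

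Next I would identify $P_{D_G}$ concretely. Since $\mathcal{O}(D_G\backslash G^{cop}\times G) = \Delta_G(\mathcal{O}(G))$, the functional $\omega_{D_G}$ is $(\text{something like})\ \epsilon$ composed with the multiplication/counit, and $h_{D_G}$ on the quotient coalgebra $\mathcal{O}(G^{cop}\times G)/(\cdots)$ should be computable; equivalently, using $\mathcal{F}(h_{D_G}) = P_{D_G}$ and the codual description, $l^\infty(\hat{D_G})$ should be the image of $l^\infty(\hat G)$ under the embedding $x\mapsto \Delta_{\hat G}(\cdot)$-type map into $l^\infty(\widehat{G^{cop}})\overline\otimes l^\infty(\hat G) = l^\infty(\hat G)^{op}\overline\otimes l^\infty(\hat G)$, so that $P_{D_G}$ is essentially the element implementing this diagonal embedding — morally $P_{D_G} = (\text{normalized})\sum_u \frac{\dim_q(u)}{\dim(u)}$-weighted rank-one-type projection onto the invariant vectors, i.e. the family of minimal central projections assembled from the $R$-morphisms $\mathbf 1\to \bar u\cdot u$. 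I would then compute $S_{\widehat{G^{cop}\times G}}(P_{D_G})$. The antipode of $\widehat{G^{cop}\times G}$ is $S_{\widehat{G^{cop}}}\otimes S_{\hat G}$ up to a flip, and since $S_{\widehat{G^{cop}}} = S_{\hat G}^{-1}$ (co-opposite dualizes to opposite, which flips the antipode), applying it to $P_{D_G}$ and using the defining group-like relation \eqref{Grouplike Projections} together with the fact that $S_{\hat G}$ fails to be an involution exactly by conjugation by $\rho$, one gets $S(P_{D_G}) = g^{-1}P_{D_G}$ with the $g^{-1}$ accounting precisely for the two $\rho$-discrepancies (one from each leg, with the $G^{cop}$-leg contributing trivially because $\epsilon_{G^{cop}}$ appears, leaving only the $\epsilon_G\circ\sigma^G_i$ on the second leg — though I should double-check which leg carries the modular correction).

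The main obstacle I anticipate is getting the conventions exactly right: tracking how "co-opposite on the first leg" interacts with the duality pairing $\langle g,a\rangle$, the direction of $\hat\triangleright$ versus $\hat\triangleleft$, and which of $S_G$, $S_G^{-1}$, $S_G^2$ appears where, so that the $g$ that comes out of the computation is literally $\epsilon_{G^{cop}}\otimes(\epsilon_G\circ\sigma^G_i)$ and not, say, its inverse or its $S$-twist. A clean way to sidestep some of this bookkeeping is to use the third bullet of the quoted Theorem in reverse: exhibit the candidate invariant integral directly as $\Psi(P_{D_G}\triangleleft a) = \varphi_g(a)$ and check $g$-invariance of this formula using the explicit action $x\triangleleft a = (\hat{\mathcal F}(a)\otimes id)\Delta_{\hat G}(x)$ restricted to the diagonal coideal, which reduces everything to the known modular properties of $h_G$; positivity then follows from the first bullet since $g$ is manifestly positive and invertible. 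I would likely present the argument in whichever of these two forms (checking $S(P)=g^{-1}P$, or checking invariance of $\Psi$ directly) turns out to require the least convention-chasing, and relegate the remaining identity to a short computation citing Woronowicz's characters and the structure of $\widehat{G^{cop}}$.
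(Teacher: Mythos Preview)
Your plan is workable but takes a different route from the paper. You aim to apply the second bullet of the quoted Theorem 1.11 of \cite{dCDT22} by computing the group-like projection $P_{D_G}$ explicitly and then checking $S_{\widehat{G^{cop}\times G}}(P_{D_G}) = g^{-1}P_{D_G}$ (or, as your fallback, exhibiting $\Psi$ directly). The paper instead invokes a further consequence, \cite[Corollary 1.12]{dCDT22}, which says $l^\infty(\hat D_G)$ is $g$-integrable if and only if $\epsilon_{G^{cop}\times G}\circ\sigma^{G^{cop}\times G}_{-i}\circ\sigma^{D_G\backslash G^{cop}\times G}_{i}$ extends from the coideal to a character $\varphi_g$ on all of $\mathcal{O}(G^{cop}\times G)$. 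This reduces the whole problem to identifying the modular group of $h_{G^{cop}\times G}$ restricted to $\Delta_G(\mathcal{O}(G))$, which is computed as $\Delta_G\circ\sigma^G_i = \sigma^{G^{cop}}_i\otimes\tau^G_{-i}$; composing with $\sigma^{G^{cop}\times G}_{-i} = \sigma^{G^{cop}}_{-i}\otimes\sigma^G_{-i}$ and the counit collapses one leg to $\epsilon_{G^{cop}}$ and leaves $\epsilon_G\circ\sigma^G_{\pm i}$ on the other. The advantage of the paper's approach is precisely that it bypasses the convention-chasing you flag as the main obstacle: there is no need to pin down $P_{D_G}$ or the antipode on the dual side at all, only the (standard) modular and scaling automorphisms on $\mathcal{O}(G)$. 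Your approach, while more hands-on and in principle complete, requires tracking the explicit form of $P_{D_G}$ (which you only sketch heuristically) and the interaction of the co-opposite structure with $S_{\hat G}$ --- doable, but heavier.
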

\begin{proof}
    Let us recall from \cite{dCDT22} the modular automorphism groups $(\sigma^{G^{cop}\times G}_{t})_{t\in \mathbb{R}}$ and $(\sigma^{D_G\backslash G^{cop}\times G}_{t})_{t\in\mathbb{R}}$ on $\mathcal{O}(G^{cop}\times G)$ and $\mathcal{O}(D_G\backslash G^{cop}\times G)$ respectively, that are uniquely defined by the conditions
    $$h_{G^{cop}\times G}(ab) = h_{G^{cop}\times G}(b\sigma^{G^{cop}\times G}_{-i}(a)), ~a, b\in \mathcal{O}(G^{cop}\times G),$$
    and
    $$h_{G^{cop}\times G}(ab) = h_{G^{cop}\times G}(b\sigma_{-i}^{D_G\backslash G^{cop}\times G}(a)), ~ a,b\in \mathcal{O}(D_G\backslash G^{cop}\times G).$$

    We will use \cite[Corollary 1.12]{dCDT22} which states that $l^\infty(\hat D_G)$ is $g$-integrable if and only if $\epsilon_{G^{cop}\times G}\circ \sigma^{G^{cop}\times G}_{-i}\circ\sigma^{D_G\backslash G^{cop}\times G}_{i}$ extends to the character $\varphi_g$ on $\mathcal{O}(G^{cop}\times G)$.
    
    Accordingly, since $h_{G^{cop}\times G}|_{\mathcal{O}(D_G\backslash G^{cop}\times G)} = (h_{G^{cop}}\otimes h_G)|_{\Delta_G(\mathcal{O}(G))}$,
    $$\sigma_{i}^{D_G\backslash G^{cop}\times G} = \Delta_G(\sigma^G_{i}) = \sigma^{G^{cop}}_{i}\otimes \tau^G_{-i}.$$
    Therefore,
    \begin{align*}
        \epsilon_{G^{cop}\times G}\circ \sigma^{G^{cop}\times G}_{-i}\circ\sigma^{D_G\backslash G^{cop}\times G}_{i}  &= (\epsilon_{G^{cop}}\otimes \epsilon_G)\circ (\sigma^{G^{cop}}_{-i} \otimes \sigma^{G}_{-i})\circ (\sigma^{G^{cop}}_{i}\otimes \tau^G_{-i})
        \\
        &= \epsilon_{G^{cop}}\otimes (\epsilon_G\circ \sigma^G_{-i})
    \end{align*}
    which clearly defines a character on $\mathcal{O}(G)$.
\end{proof}
\begin{remark}
    In \cite[Remark 3.6]{FLS16} it was observed that $\omega_{D_G}(a\otimes b) = h_G(S(b)\otimes a)$. They also noted that each of the functionals $\omega_{D_G, t}$ that are defined by
    $$\omega_{D_G, t}(a\otimes b) = h_G(R_{G}(a)\sigma^G_{t + i/2}(b)), a\otimes b\in \mathcal{O}(G^{cop}\times G)$$
    are states. It turns out that $\omega_{D_G,0}\circ S^{-1}_{G^{cop}\times G} = \omega_{D_G}^g$. Indeed, it is straightforward to calculate that $\varphi_{g^{1/2}} = \epsilon_{G^{cop}}\otimes (\epsilon_G\circ \sigma^G_{-i/2})$. Furthermore, since $\varphi_{g^{1/2}}\circ \tau^{G^{cop}\times G}_{i/2} = \varphi_{g^{1/2}}$, $\varphi_{g^{-1/2}} = \varphi_{g^{1/2}}\circ R_{G^{cop}\times G}$. Then, for $a\otimes b\in \mathcal{O}(G^{cop}\otimes G)$,
\begin{align*}
    \omega^g_{D_G}\circ \tau_{-i/2}^{G^{cop}\times G}(a\otimes b)  &= \omega_{D_G}(g^{-1/2}\hat{\triangleright} R_{G^{cop}\times G}(a\otimes b))
    \\
    &= (\varphi_{g^{1/2}}*(w_{D_G}\circ R_{G^{cop}\times G}))(a\otimes b)
    \\
    &= ((\epsilon_{G^{cop}})\otimes (\epsilon_G\circ \sigma^G_{-i/2})\otimes \omega_{D_G})(a_{(1)}\otimes b_{(1)}\otimes R_{G^{cop}}(a_{(2)})\otimes R_G(b_{(2)}))
    \\
    &= \epsilon_G(\sigma^G_{-i/2}(b_{(1)}))h_G( (S_G\circ R_{G})(a)R_G(b_{(2)}))
    \\
    &= h_G( (S_{G}\circ R_{G})(a)\tau^G_{-i/2}(R_G(\sigma^G_{-i/2}(b))))
    \\
    &= h_G( \tau^{G}_{-i/2}( a  \sigma^G_{i/2}(R_G(b)) )  )
    \\
    &= h_G(a\sigma^G_{i/2}(R_G(b)))
    \\
    &= \omega_{D_G, 0}((R_{G_{cop}}\otimes R_{G})(a\otimes b)).
\end{align*}
    This shows $\omega^g_{D_G}\circ \tau_{-i/2}^{G^{cop}\times G} = \omega_{D_G,0}\circ R_{G^{cop}\times G}$ and hence the desired result.
\end{remark}
\end{subsection}

\begin{subsection}{$g$-Coamenability and Amenable Fusion Modules }
In this subsection, we introduce a notion of $g$-coamenability of the object $H$ corresponding to a coideal $\mathcal{O}(H\backslash G)$ and characterize it in terms of amenability of the fusion module $Rep(H)$.

Let $\pi : \mathcal{O}(G)\to \mathcal{B}(\mathcal{H})$ be a $*$-representation. We will write $C_\pi(G) = \overline{\pi(\mathcal{O}(G))}$. For example, $C_r(G) = C_{\lambda_G^g}(G)$ where $\lambda_G^g$ is the $g$-quasi-regular representation.

Now let $\rho$ be another $*$-representation of $\mathcal{O}(G)$. If $||\pi(a)||\leq ||\rho(a)||$ for all $a\in \mathcal{O}(G)$, then we write $\pi\prec \rho$ and say $\pi$ is weakly contained in $\rho$. If $\rho\prec \pi$ and $\pi \prec \rho$ then we say $\pi$ and $\rho$ are weakly equivalent.

Recall that $G$ is coamenable if and only if $\epsilon_G\prec \lambda_{G}$, i.e., the counit $\epsilon_G$ extends to $C_r(G)$. We will consider a generalization of this property for coideals with a relatively integrable codual.
\begin{definition}
    Let $G$ be a CQG and $\mathcal{O}(H\backslash G)$ a coideal where $l^\infty(\hat H)$ is $g$-integrable. We will say $H$ is $g$-coamenable if $\epsilon_G\prec \lambda_H^g$.
\end{definition}
In order to prove the main result of the paper which establishes a $C^*$-tensor categorical characterization of coamenability of a $*$-coalgebra associated with a coideal with a relatively integrable codual, we follow the strategy explained in the Introduction.

Accordingly, let us first establish the point (2) of this strategy. Let
$$\chi : \mathbb{C}[Rep(G)] \to \mathbb{C}, ~ u \mapsto \sum u_{i,i}, u\in Rep(G)$$
be the character map, which is a unital $*$-homomorphism. The proof of the following is an easy adaptation of \cite[Theorem 4.4]{K08}.
\begin{proposition}\label{Kestens Criterion for Quasi-Representations}
    Let $\pi : \mathcal{O}(G)\to B(H_\pi)$ be a $*$-representation. We have that $\epsilon_G\prec \pi$ if and only if Kesten's criterion holds for $\pi$, i.e., $n_u\in \sigma_{C_\pi(G)}( \Re(\pi(\chi(u))) )$ for every $u\in Rep(G)$. In particular, if $\mathcal{O}(H\backslash G)$ is a coideal where $l^\infty(\hat H)$ is $g$-integrable then $H$ is $g$-coamenable if and only if Kesten's criterion holds for $\lambda_H^g$.
\end{proposition}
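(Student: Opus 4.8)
The plan is to reduce this to the known result \cite[Theorem 4.4]{K08} by checking that the relevant hypotheses in Kesten's criterion survive the passage from a CQG to an arbitrary unital $*$-representation $\pi$ of $\mathcal{O}(G)$. Recall the abstract mechanism: $\epsilon_G$ is a one-dimensional (hence irreducible) $*$-representation, and $\epsilon_G\prec \pi$ means precisely that for every $a\in \mathcal O(G)$ one has $|\epsilon_G(a)|\le \|\pi(a)\|$. Since the matrix coefficients of irreducibles $u\in \mathrm{Rep}(G)$ span $\mathcal O(G)$, and since $\chi(u)=\sum_i u_{i,i}$ packages all the diagonal coefficients of $u$, the containment $\epsilon_G\prec\pi$ should be testable representation-by-representation, with $u$ contributing the datum $n_u=\epsilon_G(\chi(u))$. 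First I would spell out this ``local'' reformulation: $\epsilon_G\prec\pi$ iff for each $u\in\mathrm{Rep}(G)$ the element $\pi(\chi(u))\in C_\pi(G)$ ``sees'' the value $n_u$ in a suitable sense.

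Second, I would import the operator-theoretic content of Kesten's argument. Write $T_u=\pi(\chi(u))$; because $\chi(u)$ need not be self-adjoint in $\mathcal O(G)$ (it is when $G$ is of Kac type and $u$ self-conjugate, but not in general) one works with $\Re(T_u)=\tfrac12(T_u+T_u^*)$, a self-adjoint element of the $C^*$-algebra $C_\pi(G)$. The key bound, which comes from $\|\Gamma_u\|\le d^R(u)$-type estimates (Proposition~\ref{gamma}, or rather the analogous bound on $\mathrm{Rep}(G)$ from \cite[Lemma 2.7.3, Prop. 2.7.4]{NT13}) applied to $\pi$ via positivity of $\pi$ and the fusion-algebra structure of $\mathbb C[\mathrm{Rep}(G)]$, is that $\|\Re(T_u)\|\le n_u$ always, so that $n_u\in\sigma_{C_\pi(G)}(\Re(T_u))$ is equivalent to $n_u$ being in the spectrum, i.e. to $\|\Re(T_u)\|=n_u$ (the spectrum is real and bounded by $n_u$, and is closed, so it contains $n_u$ iff its sup equals $n_u$). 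Then I would run the standard Kesten/Følner-type argument: if $\epsilon_G\prec\pi$, apply $\epsilon_G\prec\pi$ to $a=\chi(u)+\chi(u)^*=\chi(u)+\chi(\bar u)$ (using $S_G$ and the identification $\chi(u)^*=\chi(\bar u)$ from the formula $S_G(u_{i,j})=u_{j,i}^*$) to get $2n_u=\epsilon_G(\chi(u)+\chi(\bar u))\le\|\pi(\chi(u)+\chi(\bar u))\|=\|2\Re(T_u)\|\le 2n_u$, forcing $\|\Re(T_u)\|=n_u$. Conversely, if Kesten's criterion holds for all $u$, one shows $\epsilon_G\prec\pi$ by the usual spectral-measure argument: from $n_u\in\sigma(\Re(T_u))$ one extracts approximate eigenvectors and uses the fusion relations to build, for any probability measure supported on finitely many $u$'s, a near-invariant vector, yielding $|\epsilon_G(a)|\le\|\pi(a)\|$ for $a$ in a spanning set and hence for all $a$ by linearity and a norm estimate. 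This is exactly the adaptation of \cite[Theorem 4.4]{K08} promised in the statement, so I would cite that theorem for the bulk and only indicate the changes.

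Third, for the ``in particular'' clause I would simply specialize $\pi=\lambda_H^g$. By Definition~\ref{g-Quasi-Regular Representation} and Proposition~\ref{gQuasiRegular Representation Proposition}, $\lambda_H^g$ is a genuine unital $*$-representation of $\mathcal O(G)$ on $l^2_g(\hat H)$, so the general statement applies verbatim with $C_{\lambda_H^g}(G)$ in place of $C_\pi(G)$; and $H$ being $g$-coamenable is \emph{by definition} $\epsilon_G\prec\lambda_H^g$. Hence $H$ is $g$-coamenable iff Kesten's criterion holds for $\lambda_H^g$, with no further work.

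The main obstacle I anticipate is the passage from the CQG setting of \cite{K08}, where the ambient $C^*$-algebra is $C_r(G)$ equipped with the Haar state and one has a canonical tracial-type vector state to run the Cauchy–Schwarz/convexity estimates, to an \emph{arbitrary} $*$-representation $\pi$ with no a priori invariant state: one must check that the inequality $\|\Re(\pi(\chi(u)))\|\le n_u$ still holds using only positivity of $\pi$ and the $\mathbb Z^+$-fusion-ring structure of $\mathbb C[\mathrm{Rep}(G)]$ (this is where a norm bound of the $\Gamma$-operator type, transported along $\pi$, does the job), and that the approximate-invariant-vector construction in the converse direction does not secretly use cyclicity of a GNS vector. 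Both are routine once one notices that all the estimates are ``categorical'' — they only involve the numbers $N^w_{u,v}$ and $n_u$ — but this is the step that needs the care of actually writing it out, which is precisely why the authors phrase it as ``an easy adaptation'' rather than a citation.
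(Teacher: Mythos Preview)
Your proposal is correct and matches the paper's approach exactly: the paper gives no proof beyond the sentence ``the proof of the following is an easy adaptation of \cite[Theorem 4.4]{K08},'' and your sketch of that adaptation is sound. Two minor refinements: the bound $\|\pi(\chi(u))\|\le n_u$ follows more directly from the fact that the entries of the unitary $(id\otimes\pi)(u)\in M_{n_u}(C_\pi(G))$ are contractions (no need for Proposition~\ref{gamma}); and your equivalence ``$n_u\in\sigma(\Re(T_u))\iff\|\Re(T_u)\|=n_u$'' is not quite right for a fixed $u$ (the norm condition a priori only forces $\pm n_u\in\sigma$), so for the forward direction it is cleaner to observe that once $\epsilon_G$ extends to a character on $C_\pi(G)$ it maps every self-adjoint element into its spectrum, giving $n_u=\epsilon_G(\Re(\pi(\chi(u))))\in\sigma(\Re(\pi(\chi(u))))$ immediately.
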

What remains is establishing the unitary isomorphism alluded to at the start of this section. The main conceptual step is a generalization of the ``canonical trace'' on $\mathbb{C}[Rep(G)]$ and its GNS representation as it appeared in \cite{K08}. Loosely speaking, what we are doing is finding an analogue for an idempotent state of $G$ on the fusion ring induced by $Rep(G)$ in a way that generalizes how the ``canonical trace'' mentioned above is an analogue of the Haar state.

Let $\mathcal{O}(H\backslash G)$ be a coideal such that $l^\infty(\hat H)$ is $g$-integrable. Let $\tau_H^g = \omega_H^g\circ \chi$, which is a state on $\mathbb{C}[Rep(G)]$. Let $C^*_{\tau_H^g}(Rep(G))$ be the enveloping $C^*$-algebra of $\mathbb{C}[Rep(G)]$ with respect to the GNS space $L^2(Rep(G), \tau_H^g)$ of $\tau_H^g$. Let $\pi_{\tau_H^g}$ denote the corresponding GNS representation which satisfies $\pi_{\tau_H^g}(u)v = u\cdot v$ in $L^2(Rep(G), \tau_H^g)$.
\begin{proposition}\label{Quasi-Character Map Extends C*}
    Let $G$ be a CQG. Let $\mathcal{O}(H\backslash G)$ be a coideal with $g$-integrable codual. The map $\lambda_H^g\circ \chi : \mathbb{C}[Rep(G)] \to \mathcal{O}(G)$ extends to an isometric $*$-homomorphism $C^*_{\tau_H^g}(Rep(G))\to C_{\lambda_H^g}(G)$. There is also a unitary isomorphism $L^2(Rep(G), \tau_H^g)\cong L^2(\chi(\mathcal{O}(G)), \omega_H^g)$.
\end{proposition}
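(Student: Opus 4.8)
The plan is to transport structure along the map $\lambda_H^g \circ \chi$ and verify it intertwines the two GNS constructions. First I would set up notation: on $\mathbb{C}[\mathrm{Rep}(G)]$ we have the state $\tau_H^g = \omega_H^g \circ \chi$, with GNS triple $(L^2(\mathrm{Rep}(G),\tau_H^g), \pi_{\tau_H^g}, \Omega)$ where $\Omega$ is the image of $1 = \varepsilon$ and $\pi_{\tau_H^g}(u)v = u\cdot v$; on the other side we have $\omega_H^g$ on $\chi(\mathcal{O}(G)) \subseteq \mathcal{O}(G)$, whose GNS representation (by the remark after Proposition~\ref{Plancherel Theorem}) is the restriction of $\lambda_H^g$, acting on $L^2(\chi(\mathcal{O}(G)), \omega_H^g)$ with cyclic vector $\eta_H^g(\chi(1)) = \eta_H^g(1)$. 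The key computation is that $\chi$ is a unital $*$-homomorphism $\mathbb{C}[\mathrm{Rep}(G)] \to \mathcal{O}(G)$ (stated just above Proposition~\ref{Kestens Criterion for Quasi-Representations}), so that $\omega_H^g \circ \chi$ genuinely equals $\tau_H^g$ as linear functionals, and moreover the character map intertwines the tensor-product multiplication on $\mathbb{C}[\mathrm{Rep}(G)]$ with the algebra multiplication on $\mathcal{O}(G)$: $\chi(u \cdot v) = \chi(u)\chi(v)$.

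The central step is to define $U : L^2(\mathrm{Rep}(G), \tau_H^g) \to L^2(\chi(\mathcal{O}(G)), \omega_H^g)$ on the dense subspace by $U(v) = \eta_H^g(\chi(v))$ for $v \in \mathbb{C}[\mathrm{Rep}(G)]$, and check it is well-defined and isometric: for $v, w \in \mathbb{C}[\mathrm{Rep}(G)]$,
\[
\langle U v, U w\rangle = \omega_H^g(\chi(w)^*\chi(v)) = \omega_H^g(\chi(w^* v)) = \tau_H^g(w^* v) = \langle v, w\rangle,
\]
using that $\chi$ is a $*$-homomorphism. This shows $U$ descends to an isometry on the quotient and extends to the completion; surjectivity is automatic since $\eta_H^g(\chi(\mathbb{C}[\mathrm{Rep}(G)]))$ is dense in $L^2(\chi(\mathcal{O}(G)), \omega_H^g)$ (indeed $\chi(\mathbb{C}[\mathrm{Rep}(G)]) = \chi(\mathcal{O}(G))$, as $\chi$ is surjective onto its image by definition). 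Then one verifies the intertwining $U \pi_{\tau_H^g}(v) = \lambda_H^g(\chi(v)) U$ on the dense subspace:
\[
U \pi_{\tau_H^g}(v) w = U(v\cdot w) = \eta_H^g(\chi(v\cdot w)) = \eta_H^g(\chi(v)\chi(w)) = \lambda_H^g(\chi(v))\eta_H^g(\chi(w)) = \lambda_H^g(\chi(v)) U w,
\]
using Proposition~\ref{Plancherel Theorem} (which identifies $\lambda_H^g$ as the GNS representation of $\omega_H^g$, so that $\lambda_H^g(a)\eta_H^g(b) = \eta_H^g(ab)$). Consequently, for $v \in \mathbb{C}[\mathrm{Rep}(G)]$,
\[
\|\pi_{\tau_H^g}(v)\| = \|\lambda_H^g(\chi(v))\|,
\]
since the two representations are unitarily conjugate via $U$. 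Passing to enveloping $C^*$-algebras, the assignment $v \mapsto \lambda_H^g(\chi(v))$ is isometric for the $C^*_{\tau_H^g}(\mathrm{Rep}(G))$-norm, hence extends to an isometric $*$-homomorphism $C^*_{\tau_H^g}(\mathrm{Rep}(G)) \to C_{\lambda_H^g}(G)$, and $U$ is the claimed unitary isomorphism of GNS Hilbert spaces.

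The main obstacle I anticipate is purely bookkeeping rather than conceptual: being careful that $\chi$ really is $*$-preserving and multiplicative in the sense needed — i.e. that the involution on $\mathbb{C}[\mathrm{Rep}(G)]$ given by $u \mapsto \bar u$ (conjugate representation) maps under $\chi$ to the $*$ of $\mathcal{O}(G)$, which amounts to $\chi(\bar u) = \chi(u)^*$, and that $\chi$ respects products $\chi(u\cdot v) = \chi(u)\chi(v)$. These are standard facts about characters of representations of a CQG (the character of $\bar u$ is $\sum_i \overline{u_{i,i}} = (\sum_i u_{i,i})^*$ up to the usual identification, and characters multiply under tensor product), and the excerpt already asserts $\chi$ is a unital $*$-homomorphism, so I would cite that. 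A secondary point is that $\omega_H^g$ need not be a trace, so one must be slightly careful that the GNS inner-product convention $\langle \eta(a),\eta(b)\rangle = \omega_H^g(b^*a)$ is used consistently on both sides; with the conventions fixed as above everything matches.
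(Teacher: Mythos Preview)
Your approach is the same as the paper's: both establish the Hilbert-space isometry $U:L^2(\mathrm{Rep}(G),\tau_H^g)\to L^2(\chi(\mathcal{O}(G)),\omega_H^g)$ via $\omega_H^g\circ\chi=\tau_H^g$ and then pass to the $C^*$-level. The unitary isomorphism of GNS spaces (the second assertion of the proposition) is correctly proved.

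There is, however, a gap in your deduction of the $C^*$-isometry. The intertwining relation $U\pi_{\tau_H^g}(v)=\lambda_H^g(\chi(v))U$ only identifies $\pi_{\tau_H^g}(v)$ with the \emph{compression} of $\lambda_H^g(\chi(v))$ to the range of $U$, namely the closed subspace $L^2(\chi(\mathcal{O}(G)),\omega_H^g)\subsetneq l^2_g(\hat H)$ (and this inclusion is proper in general, cf.\ the chain of inclusions recorded just after the proposition in the paper). Hence your argument yields only
\[
\|\pi_{\tau_H^g}(v)\|=\bigl\|\lambda_H^g(\chi(v))\big|_{L^2(\chi(\mathcal{O}(G)),\omega_H^g)}\bigr\|\le \|\lambda_H^g(\chi(v))\|_{B(l^2_g(\hat H))},
\]
not equality. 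But it is precisely the reverse inequality that is needed for the map $C^*_{\tau_H^g}(\mathrm{Rep}(G))\to C_{\lambda_H^g}(G)$ to even be well-defined in the stated direction, let alone isometric; and the full equality is exactly what the paper uses in the next lemma. Your sentence ``since the two representations are unitarily conjugate via $U$'' is therefore not a valid justification: $U$ is an isometry into a proper subspace, not a unitary between the two representation spaces.

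The paper's own proof is equally terse at this point, simply invoking a ``standard argument'' and citing \cite{K08}. One way to close the gap is to show that the restriction map $T\mapsto T|_{\mathrm{Ran}(U)}$ is injective on the $C^*$-algebra $\overline{\lambda_H^g(\chi(\mathbb{C}[\mathrm{Rep}(G)]))}$ (injective $*$-homomorphisms of $C^*$-algebras are isometric); this amounts to showing that the GNS vector $\eta_H^g(1)$ is separating for that algebra, which is the content of the ``standard argument''. You should either supply this, or at minimum flag that an extra step beyond the Hilbert-space intertwining is required.
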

\begin{proof}
    The proof is an adaptation of the proof of \cite[Lemma 4.6]{K08}. Since $\omega_H^g(\chi(u)) = \tau_H^g(u)$, $\lambda_H^g\circ \chi$ extends to an isometric embedding
    $$L^2(Rep(G), \tau_H^g)\to l^2_g(\hat H) \cong L^2(\mathcal{O}(G), \omega_H^g).$$
    It then follows from a standard argument that the map
    $$\kappa : \lambda_H^g\circ \chi(\mathcal{O}(G)) \to  \pi_{\tau_H^g}(\mathbb{C}[Rep(G)]), \lambda_H^g(\chi(u))) \mapsto  \pi_{\tau_H^g}(u)$$
    extends to a $*$-isomorphism
    $$\kappa : \overline{\lambda_H^g\circ \chi(\mathcal{O}(G))}\to C^*_{\tau_H^g}(Rep(G)).$$
\end{proof}
For $\alpha\in Irr(H)$ let $P_\alpha \in l^\infty(\hat H)$ denote the orthogonal projection onto $H_\alpha$ which is nothing more than the identity operator in $M_{n_\alpha}\subset l^\infty(\hat H)$. It is easily observed that the map $\alpha\mapsto P_\alpha$ induces a unitary isomorphism
$$l^2(Irr(H)) \cong \overline{span\{\eta_{\hat H}(P_\alpha) : \alpha\in Irr(H)\}} \subset l^2_g(\hat H).$$
From this and the duality it is also easy to check that
$$L^2(\chi(\mathcal{O}(G)), \omega_H^g) = \overline{span\{\eta_{H}(q_H(\chi(u))) : u\in Rep(G)\}}\subset l^2_g(Irr(H))$$
as a closed subspace. For this last claim, consider how $(tr_u\otimes id)(W_H^g) = \lambda_H^g(\chi(u)^*)$ where $tr_u$ is the (non-normalized) trace on $M_{n_u}\subset l^\infty(\hat G)$, $u\in Rep(G)$. Altogether,
\begin{align}\label{Hilbert Space Inclusions}
    L^2(Rep(G), \tau_H^g)\cong L^2(\chi(\mathcal{O}(G)), \omega_H^g)\subset l^2(Irr(H))\subset l^2_g(\hat H).
\end{align}
The next lemma is the key step of our main result. It is what relates Kesten's criterion to the norms of the operators $\Gamma_u$ acting on $l^2(Irr(H))$.
\begin{lemma}\label{Kestens Criterion and Norm}
    For $a\in \mathbb{C}[Rep(G)]$, $||\lambda_H^g(\chi(a))|| = ||\Gamma_a||_{B(l^2(Irr(H))}$.
\end{lemma}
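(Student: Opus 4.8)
The plan is to reduce the statement to a comparison of two $*$-representations of $\mathbb{C}[Rep(G)]$ and then to compare their $C^*$-norms. By Proposition~\ref{Quasi-Character Map Extends C*}, $\lambda_H^g\circ\chi$ extends to an isometric $*$-isomorphism of $C^*_{\tau_H^g}(Rep(G))$ onto $\overline{\lambda_H^g(\chi(\mathcal{O}(G)))}$; hence $\|\lambda_H^g(\chi(a))\|=\|\pi_{\tau_H^g}(a)\|_{B(L^2(Rep(G),\tau_H^g))}$ for all $a\in\mathbb{C}[Rep(G)]$, and it suffices to prove $\|\pi_{\tau_H^g}(a)\|=\|\Gamma_a\|_{B(l^2(Irr(H)))}$. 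Let $\alpha_0\in Irr(H)$ be the image of the trivial representation of $G$ under the restriction module functor $Rep(G)\to Rep(H)$ constructed earlier, so $\alpha_0$ generates $Rep(H)$. First I would record, from the discussion preceding \eqref{Hilbert Space Inclusions} and Proposition~\ref{Plancherel Theorem} (under the identification $\eta_{\hat H}^g(P_H)\leftrightarrow\delta_{\alpha_0}$), that $\tau_H^g(u)=\langle\Gamma_u\delta_{\alpha_0},\delta_{\alpha_0}\rangle=c^{\alpha_0}_{u,\alpha_0}$. Then the map $u\mapsto u\circ\alpha_0=\Gamma_u\delta_{\alpha_0}$ satisfies $\langle u\circ\alpha_0,\,v\circ\alpha_0\rangle=\tau_H^g(v^*u)$ (by Frobenius reciprocity, condition (iii) of Definition~\ref{module}) and is $\mathbb{C}[Rep(G)]$-equivariant, so it extends to an isometry of $L^2(Rep(G),\tau_H^g)$ onto the $\Gamma$-invariant subspace $K:=\overline{\operatorname{span}}\{u\circ\alpha_0:u\in Rep(G)\}$, carrying $\pi_{\tau_H^g}$ to $\Gamma|_K$. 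This already gives $\|\Gamma_a\|\geq\|\Gamma_a|_K\|=\|\pi_{\tau_H^g}(a)\|$.

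The crux is the reverse inequality, i.e.\ $\Gamma\prec\pi_{\tau_H^g}$. Since $\Gamma_u^*=\Gamma_{\bar u}$, the subspace $K^\perp$ is also $\Gamma$-invariant, so $\Gamma=\Gamma|_K\oplus\Gamma|_{K^\perp}$ and it is enough to show $\Gamma|_{K^\perp}\prec\Gamma|_K$. The structural input I would use is twofold. First, in the orthonormal basis $\{\delta_\beta\}$ the operators $\Gamma_u$ ($u\in Irr(G)$) are matrices with nonnegative entries (the $c^\beta_{u,\alpha}$ are nonnegative integers), and because $\alpha_0$ generates $Rep(H)$ every $\delta_\beta$ is dominated entrywise by some $\Gamma_w\delta_{\alpha_0}\in K$ and, conversely, $\delta_{\alpha_0}$ is dominated entrywise by some $\Gamma_{w'}\delta_\beta$ — so the family $\{\Gamma_u\}$ is Perron--Frobenius irreducible on $l^2(Irr(H))$ and $K$ contains the dominant part of the spectrum of any $\Gamma_{a^*a}$. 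Secondly, the fundamental unitary $W_H^g=(id\otimes\lambda_H^g)(\mathbb{W}_{\hat G})$ and Proposition~\ref{Coaction on Hhat} exhibit $\lambda_H^g\circ\chi$ (hence $\Gamma|_K$) via the ``$tr_u$-slices'' $(tr_u\otimes id)(W_H^g)=\lambda_H^g(\chi(u)^*)$ of the regular corepresentation attached to the coaction of $\hat G$ on $l^\infty(\hat H)$, while $\Gamma$ itself arises as the corresponding slices of the module-category corepresentation on $l^2(Irr(H))$; a Fell-type weak-containment argument for representations induced from one and the same coaction then yields $\Gamma\prec\lambda_H^g\circ\chi$. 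Combined with the first paragraph, this forces $\|\Gamma_a\|\leq\|\pi_{\tau_H^g}(a)\|=\|\lambda_H^g(\chi(a))\|$, and equality follows.

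The step I expect to be hardest is exactly $\Gamma\prec\pi_{\tau_H^g}$ (equivalently $\Gamma|_{K^\perp}\prec\Gamma|_K$). The elementary nonnegativity comparison only gives $\langle\Gamma_a\delta_\beta,\delta_\beta\rangle\leq\langle\Gamma_a(w\circ\alpha_0),(w\circ\alpha_0)\rangle$ for $a\in R^+$, which does not by itself bound $\|\Gamma_a\|$ for a general $a\in\mathbb{C}[Rep(G)]$ because $a^*a\notin R^+$ in general; closing this gap is what makes the fundamental-unitary picture (or an explicit realization of $l^2(Irr(H))$ inside an amplification of $L^2(Rep(G),\tau_H^g)$, assembled from the isometries $H_\beta\hookrightarrow H_{w\circ\alpha_0}$) necessary.
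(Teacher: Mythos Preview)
Your argument for $\|\pi_{\tau_H^g}(a)\|\le\|\Gamma_a\|$ is fine, and you correctly invoke Proposition~\ref{Quasi-Character Map Extends C*} for $\|\lambda_H^g(\chi(a))\|=\|\pi_{\tau_H^g}(a)\|$. The gap is precisely the step you flag as the ``crux'': you do not actually prove $\|\Gamma_a\|\le\|\pi_{\tau_H^g}(a)\|$. The Perron--Frobenius sketch only controls $\langle\Gamma_a\delta_\beta,\delta_\beta\rangle$ for $a\in R^+$, which (as you note) does not bound $\|\Gamma_a\|$ for general $a$, and the ``Fell-type weak-containment argument for representations induced from one and the same coaction'' is asserted rather than proved. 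So as written the proposal is incomplete.

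The paper's proof bypasses this difficulty entirely, and the missing observation is simple: you are only using two terms of the three-term chain \eqref{Hilbert Space Inclusions}. The paper uses all three. Not only does $L^2(Rep(G),\tau_H^g)$ embed in $l^2(Irr(H))$, but $l^2(Irr(H))$ in turn embeds in $l^2_g(\hat H)$ as a $\lambda_H^g(\chi(\cdot))$-invariant subspace on which $\lambda_H^g(\chi(u))$ restricts to $\Gamma_u$ (this is what the identification $\alpha\mapsto\eta_{\hat H}(P_\alpha)$ and the slice formula $(tr_u\otimes id)(W_H^g)=\lambda_H^g(\chi(u)^*)$ set up just before the lemma are for). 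Restriction to an invariant subspace decreases operator norm, so one gets the sandwich
\[
\|\pi_{\tau_H^g}(a)\|\le\|\Gamma_a\|_{B(l^2(Irr(H)))}\le\|\lambda_H^g(\chi(a))\|_{B(l^2_g(\hat H))},
\]
and Proposition~\ref{Quasi-Character Map Extends C*} equates the two ends. No Perron--Frobenius or Fell absorption is needed; the inequality you thought was hardest is in fact the easiest once you realize $\Gamma$ is itself a compression of $\lambda_H^g\circ\chi$, not just the other way around.
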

\begin{proof}
    For $u,v\in Rep(G)$
    $$\lambda_H^g(\chi(u))\eta_H(q_H(\chi(v))) = \eta_H(q_H(\chi(u\cdot v)))  = \Gamma_u(v\circ 1)$$
    and invoking the unitary isomorphism in Proposition~\ref{Quasi-Character Map Extends C*},
    $$\pi_{\tau_H^g}(u)v = \Gamma_u(v\circ 1).$$
    This shows that the restrictions of both $\Gamma_u$ and $\lambda_H^g(\chi(u))$ to $L^2(Rep(G), \tau_H^g)$ are equal to $\pi_{\tau_H^g}(u)$.

    Regarding \eqref{Hilbert Space Inclusions}, for $a\in \mathbb{C}[Rep(G)]$,
    $$||\pi_{\tau_H^g}(a)||_{B(L^2(Rep(G),\tau_H^g))}\leq ||\Gamma_a||_{B(l^2_g(Irr(H)))} || \leq  ||\lambda_H^g(\chi(a))||_{B(l^2_g(\hat H))}.$$
    However, thanks to Proposition~\ref{Quasi-Character Map Extends C*}, $||\lambda_H^g(\chi(a))|| = ||\pi_{\tau_H^g}(a)||$.
\end{proof}
Now we are able to prove the main result. Recall it:\\
~\\
{\bf Theorem~\ref{Coamenability and Amenable Fusion Modules}}
 Let $G$ be a CQG and let $\mathcal{O}(G/H)$ be a coideal such that $l^\infty(\hat H)$ is $g$-integrable and $H$ is Kac type. Then $\hat H$ is amenable if and only if $H$ is $g$-coamenable.


\begin{proof}

Assume $H$ is coamenable. For a contradiction, using Proposition~\ref{reform}, find $u = \sum_{v\in Irr(G)}\alpha_v v \in \mathbb{C}[Rep(G)]^+$ such that
\begin{align*}
    ||\Gamma_u||_{B(l^2(Irr(H)))} < d_G(u) = \sum_{v\in Irr(G)} \alpha_v n_v.
\end{align*}
Then $||\Gamma_{u + \overline{u}}|| < 2d_G(u)$.

Take $w\in Rep(G)$. Since $w\in M_{n_u}(\mathcal{O}(G))$ is unitary, $\chi(w)\in [-n_w, n_w]$ and therefore $||\lambda_H^g(\chi(w + \overline{w}))||\leq 2n_w$. We use this to deduce $||\lambda_H^g(\chi(u + \overline{u}))||\leq 2d_G(u)$. Then the same calculation with the counit as in the proof of Proposition~\ref{Kestens Criterion for Quasi-Representations} yields $||\lambda_H^g(\chi(u + \overline{u}))|| = 2d_G(u)$. So,
\begin{align*}
     2\sum_{v\in Irr(G)} \alpha_v n_v &= ||\lambda_H^g(\sum_{v\in Irr(H)}\alpha_v \chi(v + \overline{v}))||
     \\
     &= ||\Gamma_{u + \overline{u}}||_{B(l^2(Irr(H)))} ~ \text{(Lemma~\ref{Kestens Criterion and Norm})}
\end{align*}
which is the desired contradiction. Conversely, take $u\in Rep(G)$. By Lemma~\ref{Kestens Criterion and Norm}
$$||\chi(u + \overline{u})|| = ||\Gamma_{u + \overline{u}}|| = 2d_G(u) = 2n_u.$$
    
\end{proof}

\end{subsection}
\end{section}

\begin{section}{(Co)amenability and Duality}
\begin{subsection}{Amenability and Relative Amenability}
Fix a CQG $G$ and a coideal $\mathcal{O}(H\backslash G)\subseteq \mathcal{O}(G)$. Here we will introduce a notion of amenability of $\hat H$ which is meant to be dual to coamenability of $H$ in a sense that is analogous to the fact that amenability of $\hat G$ is equivalent to coamenability of $G$ \cite{T06}. One benefit of this notion of amenability is that it does not require $g$-integrability of $l^\infty(\hat H)$.

A DQG $\hat G$ is amenable if $l^\infty(\hat G)$ admits a $\hat G$-invariant state, i.e., a state $m : l^\infty(\hat G)\to \mathbb{C}$ such that
$$m(x\triangleleft a) = \epsilon_G(a) m(x), ~ x\in l^\infty(\hat G), a\in \mathcal{O}(G).$$
Amenability is a quantum analogue of the classical notion amenability of a discrete group. In the setting of discrete groups, amenability has a well-known generalization to quotients of groups by their subgroups and enjoys interesting connections to certain properties of their associated operator algebras. To name a couple, coamenable quotients have characterizations in terms of coamenable inclusions of von Neumann algebras \cite{MP03} and the structure of the associated $FH$-boundaries constructed in \cite{BK21}. We will present an analogue of this property for relatively integrable coideals.

In what follows, recall the notation established in the paragraphs following Proposition~\ref{Plancherel Theorem}.
\begin{definition}\label{Coamenable Inclusion}
    Let $\mathcal{O}(H\backslash G)\subseteq \mathcal{O}(G)$ be a coideal. We will say $\hat H$ is {\bf amenable} if $l^\infty(\hat H)$ admits a $\hat G$-invariant state.
\end{definition}
\begin{remark}\label{gAmenable Remark}
    Recall our notation for the coaction induced by $\lambda^g_H$ in Section $4.1$. We will sometimes work instead with functionals in $l^1(\hat G)$ and will use the notation
    $$f* m(x) := (f\otimes m)((W_H)^*(1\otimes x)W_H), ~ x\in l^\infty(\hat H), f\in l^1(\hat G).$$
    We will also use the notation
    $$f*_g m(x) := (f\otimes m)((W_H^g)^*(1\otimes x)W_H^g), ~ x\in l^\infty(\hat H), f\in l^1(\hat G).$$
    Fix a state $m\in S(l^\infty(\hat H))$. Thanks to Proposition~\ref{Coaction on Hhat}, we can see that $f*m = f(1)m$ if and only $f*_g m = f(1)m$.
\end{remark}
For the rest of this subsection, we will establish some basic results on amenability.

First, we consider a notion of amenability considered recently for coideals in \cite{KKSV22, AS23} that is in a sense ``opposite'' to amenability of $\hat H$.
\begin{definition}
    Let $\mathcal{O}(H\backslash G)$ be a coideal such that $l^\infty(\hat H)$ is $g$-integrable. We say a unital completely positive map $\Psi : l^\infty(\hat G)\to l^\infty(\hat G)$ is {\bf $\hat G$-equivariant} if
    $$\Psi(x\triangleleft a) = \Psi(x)\triangleleft a, ~ a\in \mathcal{O}(G), x\in l^\infty(\hat G).$$
    We say $l^\infty(\hat H)$ is {\bf relatively amenable} if there exists a $\hat G$-equivariant unital completely positive map $l^\infty(\hat G)\to l^\infty(\hat H)$.
\end{definition}
\begin{proposition}\label{Hereditary Result for Amenability}
    Let $G$ be a CQG and $\mathcal{O}(H\backslash G)$ a coideal. We have that $\hat G$ is amenable if and only if $l^\infty(\hat H)$ is relatively amenable and $\hat H$ is amenable.
\end{proposition}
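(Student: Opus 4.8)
The statement is the standard ``hereditary'' characterization: $\hat G$ is amenable iff $l^\infty(\hat H)$ is relatively amenable \emph{and} $\hat H$ is amenable. I would prove the two implications separately.

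For the easy direction, suppose $\hat G$ is amenable, so there is a $\hat G$-invariant state $m_0 : l^\infty(\hat G)\to\mathbb C$ with $m_0(x\triangleleft a)=\epsilon_G(a)m_0(x)$. First, relative amenability is automatic: the identity map $\mathrm{id} : l^\infty(\hat G)\to l^\infty(\hat G)$ is $\hat G$-equivariant and unital completely positive, but I need a map \emph{into} $l^\infty(\hat H)$. Here I would use the $g$-invariant integral: recall from Theorem~1.11 of \cite{dCDT22} that $\psi_{\hat H}$ is faithful, and the GNS representation $\sigma^g_{\hat H}$ together with the conditional-expectation-type slice $(\mathrm{id}\otimes m_0)\circ\Delta_{\hat G}|_{l^\infty(\hat H)}$ — using Proposition~\ref{Coaction on Hhat} — produces a $\hat G$-equivariant u.c.p.\ map; more simply, $\hat G$-amenability gives an equivariant conditional expectation $l^\infty(\hat G)\to l^\infty(\hat H)$ by averaging the inclusion against $m_0$ via the coaction, so $l^\infty(\hat H)$ is relatively amenable. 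Second, restricting $m_0$ to $l^\infty(\hat H)$ gives a state on $l^\infty(\hat H)$ that is still $\hat G$-invariant in the sense of Definition~\ref{Coamenable Inclusion} (the action $x\mapsto x\triangleleft a$ preserves $l^\infty(\hat H)$ since it is a coideal), so $\hat H$ is amenable.

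For the converse, suppose $l^\infty(\hat H)$ is relatively amenable via a $\hat G$-equivariant u.c.p.\ map $E : l^\infty(\hat G)\to l^\infty(\hat H)$, and $\hat H$ is amenable via a $\hat G$-invariant state $m : l^\infty(\hat H)\to\mathbb C$. Then I would simply set $m_0 := m\circ E : l^\infty(\hat G)\to\mathbb C$. This is a state (composition of a state with a u.c.p.\ map). For invariance, for $x\in l^\infty(\hat G)$ and $a\in\mathcal O(G)$,
\[
m_0(x\triangleleft a)=m(E(x\triangleleft a))=m(E(x)\triangleleft a)=\epsilon_G(a)\,m(E(x))=\epsilon_G(a)\,m_0(x),
\]
using $\hat G$-equivariance of $E$ and $\hat G$-invariance of $m$. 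Hence $\hat G$ is amenable.

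\textbf{Expected main obstacle.} The content-bearing step is the easy direction's claim that $\hat G$-amenability forces \emph{relative} amenability, i.e.\ produces a $\hat G$-equivariant u.c.p.\ map onto $l^\infty(\hat H)$ rather than just a $\hat G$-invariant mean. The natural construction is to take the inclusion $\iota : l^\infty(\hat H)\hookrightarrow l^\infty(\hat G)$ — wait, I need a map the other way — and instead average: define $E(x) = (\mathrm{id}\otimes m_0)(\text{coaction of } \hat G \text{ on } l^\infty(\hat G) \text{ landing in } l^\infty(\hat G)\,\bar\otimes\, l^\infty(\hat H))$. The subtlety is checking that this lands in $l^\infty(\hat H)$ and is equivariant; this is where one must use that $l^\infty(\hat H)$ is a coideal and the module/coaction compatibility encoded in Proposition~\ref{Coaction on Hhat}, plus normality of $m_0$ (which can be arranged, or one passes to a weak$^*$-limit / uses that amenability of a DQG gives a net of normal states). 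I would handle this by mimicking the classical proof that amenability of $\Gamma$ implies $\Gamma/\Lambda$ is amenable (push forward an invariant mean), being careful that the ``quotient'' here is the coideal $l^\infty(\hat H)$ and the averaging is against the $\hat G$-coaction; the equivariance of the resulting $E$ is then a direct computation from coassociativity.
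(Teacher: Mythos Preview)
Your converse is exactly the paper's argument: compose the $\hat G$-equivariant u.c.p.\ map $E:l^\infty(\hat G)\to l^\infty(\hat H)$ with the $\hat G$-invariant state $m:l^\infty(\hat H)\to\mathbb{C}$.

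For the forward direction you are overcomplicating the relative amenability step, and in doing so you invoke a hypothesis ($g$-integrability, via Proposition~\ref{Coaction on Hhat} and the integral from \cite{dCDT22}) that is \emph{not} assumed in this proposition --- it is stated for arbitrary coideals. The paper's trick is much simpler: the invariant state $m_0:l^\infty(\hat G)\to\mathbb{C}$, viewed as landing in the scalars $\mathbb{C}\subseteq l^\infty(\hat H)$, is \emph{already} a $\hat G$-equivariant u.c.p.\ map. Equivariance is the one-line check that on scalars $c\triangleleft a=\epsilon_G(a)c$, so $m_0(x)\triangleleft a=\epsilon_G(a)m_0(x)=m_0(x\triangleleft a)$. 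No averaging, no conditional expectation, no integrability.

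It is worth noting that your proposed averaging construction collapses to exactly this. If you set $E(x)=(\mathrm{id}\otimes m_0)\Delta_{\hat G}(x)$, then invariance of $m_0$ says $(f_a\otimes m_0)\Delta_{\hat G}(x)=f_a(1)m_0(x)$ for every $a\in\mathcal{O}(G)$, i.e.\ $(\mathrm{id}\otimes m_0)\Delta_{\hat G}(x)=m_0(x)\cdot 1$. So your ``averaged'' $E$ is just $m_0$ again, viewed as scalar-valued. There is no genuine obstacle here; the definition of relative amenability does not demand a surjective conditional expectation, only a $\hat G$-equivariant u.c.p.\ map with range in $l^\infty(\hat H)$, and $\mathbb{C}$ qualifies.
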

\begin{proof}
    Assume $\hat G$ is amenable. It is straightforward to show that $m : l^\infty(\hat G)\to \mathbb{C}\subseteq l^\infty(\hat H)$ satisfies the requirement for both amenability of $\hat H$ and relative amenability. Conversely, if $m : l^\infty(\hat H)\to \mathbb{C}$ is a $\hat G$-invariant state and $\Psi : l^\infty(\hat G)\to l^\infty(\hat H)$ is $\hat G$-equivariant then it is straightfoward to calculate that $m\circ \Psi : l^\infty(\hat G)\to \mathbb{C}$ is a $\hat G$-invariant state.
\end{proof}
Next, we have a partial generalization of the characterization found in \cite{MP03}for the duals of discrete groups.

Let $L^\infty(G) = \mathcal{O}(G)''\subseteq \mathcal{B}(l^2(\hat G))$ and
$$L^\infty(H\backslash G) = \mathcal{O}(H\backslash G)''\subseteq L^\infty(G).$$
Let $M$ be a von Neumann algebra with a normal faithful state $h$. Given an inclusion of von Neumann algebras $N\subseteq M$, we denote $\langle N, M\rangle = JNJ'\cap \mathcal{B}(L^2(M,h))$ where $J$ is the modular conjugation in the standard form of $M$ in $\mathcal{B}(L^2(M, h))$.
\begin{definition}
    We say the inclusion $N\subseteq M$ is coamenable if there exists a conditional expectation $\langle N,M\rangle \to M$ onto $M$.
\end{definition}
Let $G = \hat\Gamma$ be the dual of a discrete group $\Gamma$ and $\Lambda\leq \Gamma$ a subgroup. Popa and Monod established in \cite{MP03} that the inclusion $L^\infty(\hat\Lambda)\subseteq L^\infty(\hat\Gamma)$ is coamenable if and only if $\Gamma / \Lambda$ is amenable. We have been unable to prove a full analogue of this result for quantum groups. For the result we do have, we are forced to consider the following class of CQGs recently developed by Crann\cite{C19}.
\begin{definition}
    Let $G$ be a CQG. We say $\hat G$ is {\bf inner amenable} if $L^\infty(G)$ admits a $\hat G$-invariant state.
\end{definition}
\begin{remark}
    Every Kac type CQG is inner amenable. Indeed, the Haar state $h_G\in L^\infty(G)^*$ is $\hat G$-invariant if and only if $G$ is Kac type (cf. \cite{KKSV22, I02}). Note, also, that if $L^\infty(G)$ admits a $\hat G$-invariant state $\tau : L^\infty(G) \to \mathbb{C}$, then the restriction $\tau|_{C_r(G)} : C_r(G)\to \mathbb{C}$ is a $\hat G$-invariant state which implies the existence of a tracial state on $C_r(G)$ (cf. \cite{AS22(1)}).

    It is also the case that every coamenable CQG is inner amenable. For this, it is easy to show the counit is $\hat G$-invariant.
\end{remark}
\begin{proposition}
    Let $\mathcal{O}(H\backslash G)$ be a coideal. Assume that $\hat G$ is inner amenable. If $L^\infty(H\backslash G)\subseteq L^\infty(G)$ is coamenable (as an inclusion of von Neumann algebras) then $\hat H$ is amenable.
\end{proposition}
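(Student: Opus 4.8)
The plan is to realize $l^\infty(\hat H)$ as a von Neumann subalgebra of the basic construction $\langle L^\infty(H\backslash G),L^\infty(G)\rangle$ in such a way that the $\hat G$-action making $\hat H$ amenable in the sense of Definition~\ref{Coamenable Inclusion} becomes the restriction of a conjugation action of $L^\infty(G)$, and then to post-compose the conditional expectation provided by coamenability of the inclusion with the invariant state coming from inner amenability. Throughout, $L^2(G)=l^2(\hat G)$ and $\hat J$ is the modular conjugation of $h_G$, i.e. of $L^\infty(G)$ in its standard form. Since $\hat J$ implements the basic construction, $\langle L^\infty(H\backslash G),L^\infty(G)\rangle=(\hat J L^\infty(H\backslash G)\hat J)'=\hat J\,L^\infty(H\backslash G)'\,\hat J$. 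By the identity recorded in Section~2.2 (after \cite{ILP98}), $R_{\hat G}(l^\infty(\hat H))=L^\infty(H\backslash G)'\cap l^\infty(\hat G)\subseteq L^\infty(H\backslash G)'$, where $R_{\hat G}$ is the unitary antipode of $\hat G$. As $R_{\hat G}$ is implemented by $\hat J$ via $\hat J\hat x^{*}\hat J=R_{\hat G}(\hat x)$ for $\hat x\in l^\infty(\hat G)$, as $R_{\hat G}^{2}=\mathrm{id}$, and as $l^\infty(\hat H)$ is a $*$-algebra, conjugating back gives $\hat J\,R_{\hat G}(l^\infty(\hat H))\,\hat J=l^\infty(\hat H)$. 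Hence $l^\infty(\hat H)$ is a unital von Neumann subalgebra of $\langle L^\infty(H\backslash G),L^\infty(G)\rangle$, which of course also contains $L^\infty(G)$.

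The next point is that both relevant $\hat G$-actions are restrictions of a single conjugation action on $\mathcal B(L^2(G))$. For $a\in\mathcal{O}(G)$, a standard computation with the fundamental unitary writes the operation $T\mapsto T\triangleleft a$ on $\mathcal B(L^2(G))$ as $T\mapsto\sum_{i}b_{i}\,T\,c_{i}$ for finitely many $b_{i},c_{i}\in\mathcal{O}(G)\subseteq L^\infty(G)$ depending only on $a$. Restricted to $l^\infty(\hat G)$ this coincides with $(\hat{\mathcal F}(a)\otimes\mathrm{id})\Delta_{\hat G}(\cdot)$, so on the coideal $l^\infty(\hat H)$ it is exactly the $\hat G$-action whose invariant states witness amenability of $\hat H$; restricted to $L^\infty(G)$ it is the adjoint action of $\hat G$, and inner amenability of $\hat G$ in the sense of \cite{C19} provides a state $\tau$ on $L^\infty(G)$ with $\tau(b\triangleleft a)=\epsilon_G(a)\tau(b)$ for all $b\in L^\infty(G)$ and $a\in\mathcal{O}(G)$.

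To conclude, let $E:\langle L^\infty(H\backslash G),L^\infty(G)\rangle\to L^\infty(G)$ be a conditional expectation supplied by coamenability of the inclusion, and set $m:=\tau\circ E|_{l^\infty(\hat H)}$, a state on $l^\infty(\hat H)$. Given $x\in l^\infty(\hat H)$ and $a\in\mathcal{O}(G)$, write $x\triangleleft a=\sum_{i}b_{i}x c_{i}$ as above with $b_{i},c_{i}\in L^\infty(G)$; since the sum is finite and $E$ is $L^\infty(G)$-bimodular, no normality of $E$ is needed, and $E(x\triangleleft a)=\sum_{i}b_{i}E(x)c_{i}=E(x)\triangleleft a$, the final $\triangleleft$ now being the adjoint action on $L^\infty(G)$. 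Applying $\tau$ and invoking its invariance, $m(x\triangleleft a)=\tau(E(x)\triangleleft a)=\epsilon_G(a)\tau(E(x))=\epsilon_G(a)m(x)$. Thus $m$ is a $\hat G$-invariant state on $l^\infty(\hat H)$, i.e. $\hat H$ is amenable.

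The step I expect to require the most care is the identification of the two $\hat G$-actions with a single $\mathcal{O}(G)$-conjugation: one must know that this conjugation actually preserves $l^\infty(\hat G)$ (a structural fact about the quantum group, not a triviality), that its restriction to $l^\infty(\hat H)$ reproduces the defining coaction of $\hat G$ on this coideal, that its restriction to $L^\infty(G)$ is precisely the adjoint action underlying inner amenability, and — crucially for the bimodularity argument with a possibly non-normal $E$ — that the conjugating coefficients can be taken in $L^\infty(G)$, with only finitely many needed when $a\in\mathcal{O}(G)$. By contrast, the modular computation locating $l^\infty(\hat H)$ inside the basic construction is routine once the commutant relation from \cite{ILP98} is in hand, and the final invariance identity is a one-line calculation.
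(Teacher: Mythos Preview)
Your proof is correct and follows essentially the same route as the paper's: locate $l^\infty(\hat H)$ inside the basic construction $\langle L^\infty(H\backslash G),L^\infty(G)\rangle$ via the commutant identity $L^\infty(H\backslash G)'\cap l^\infty(\hat G)=R_{\hat G}(l^\infty(\hat H))$ and the implementation of $R_{\hat G}$ by the modular conjugation, then use $L^\infty(G)$-bimodularity of the conditional expectation $E$ together with the inner-amenable state $\tau$ to conclude that $\tau\circ E|_{l^\infty(\hat H)}$ is $\hat G$-invariant. The only cosmetic difference is that the paper expresses the $\hat G$-action through the fundamental unitary $W_{\hat G}$ and passes $E$ through it, whereas you write $x\triangleleft a$ as a finite $\mathcal{O}(G)$-conjugation $\sum_i b_i x c_i$; your formulation makes the bimodularity step transparent without any implicit appeal to normality of $E$, which is a small gain in precision over the paper's one-line version.
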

\begin{proof}
    Recall that the extension of the unitary antipode $R_{\hat G}$ on $l^\infty(\hat G)$ to $\mathcal{B}(l^2(\hat G))$ is given by $R_{\hat G}(T) = J_G T^* J_G$ where $J_G$ is the modular conjugation for $L^\infty(G)$. Furthermore, it is known that
    $$L^\infty(H\backslash G)'\cap l^\infty(\hat G) = R_{\hat G}(l^\infty(H)) ~ \text{\cite{ILP98}}$$
    (it should be noted that in \cite{ILP98}, they make use of the right regular representation instead). Then, since $R_{\hat G}^2 = id$,
    $$l^\infty(\hat H)\subseteq J_GL^\infty(H\backslash G)J_G' \cap \mathcal{B}(l^2(\hat G)).$$
    Now, let $\tau : L^\infty(G)\to \mathbb{C}$ be a $\hat G$-invariant state and $E : \langle L^\infty(H\backslash G), L^\infty(G)\rangle \to L^\infty(G)$ the given conditional expectation. Given $x\in l^\infty(\hat H)$
    \begin{align*}
        (id\otimes m\circ E|_{l^\infty(H)})(W_{\hat G}^*(1\otimes x)W_{\hat G}) &= (id\otimes m)(W_{\hat G}^*(1\otimes E(x))W_{\hat G}) = m(E(x))
    \end{align*}
    where in the first equation we are using $L^\infty(G)$-bimodularity of $E$. This shows $m\circ E|_{l^\infty(\hat H)}$ is $\hat G$-invariant as desired.
\end{proof}
At this time, we are unable to establish a converse of the above proposition.

\end{subsection}

\begin{subsection}{The Kac Property of Coideals}
In these remaining subsections we characterize coamenability of $H$ with amenability of $\hat H$ under an assumption on $H$ we call the Kac property. For the Kac property, we will require $l^\infty(\hat H)$ to be relatively integrable.

This duality result generalizes the main theorem in \cite{T06} as well as gives us an analogue of Theorem~\ref{Coamenability and Amenable Fusion Modules} for amenability. While in the statements of Theorem~\ref{Duality for Coamenability of Quasi Regular Representations} and Corollary~\ref{Hereditary Result for Coamenability}, we assume the Kac property of $H$, in reality we only require this assumption for one direction of the asserted claims. We explain this precisely in Remarks~\ref{Remark on Duality Result} and~\ref{Remark on Hereditary Result}. So, some of our results are also partial progress for arbitrary coideals.

Let $G$ be a CQG and $\mathcal{O}(H\backslash G)$ a coideal. Let us begin by introducing the Kac property for $H$. For this, we require a discussion on some of the additional structure of $l^\infty(\hat H)$.

Let $\psi_{\hat H} : c_{00}(\hat H) \to \mathbb{C}$ be a $g$-invariant integral, which is normalized so that $\psi_{\hat H}(P_H\triangleleft a) = g(a)$. Now, let $\delta_{g\hat H} = (\mu_{\alpha})_{\alpha\in Irr(H)}$, where $\mu_\alpha\in M_{n_\alpha}$ are positive invertible matrices chosen such that
$$\kappa_{t}^{\hat H}(x) = \delta_{g\hat H}^{-it/2}x\delta_{g\hat H}^{it/2} = (g\delta_{\hat G}^{1/2})^{-it} x (g\delta_{\hat G}^{1/2})^{it}, ~ x\in l^\infty(\hat H)$$
where $\delta_{\hat G} = (Q_u^{-2})_{u\in Irr(G)}$. 

\begin{definition}\label{Kac Type Definition}
    Let $G$ be a CQG and $\mathcal{O}(H\backslash G)$ a coideal where $l^\infty(\hat H)$ is $g$-invariant. We will say $H$ is {\bf Kac type} if $\kappa_t^{\hat H} = id$ for every $t\in \mathbb{R}$.
\end{definition}
As observed, for example, in the proof of \cite[Theorem 1.11]{dCDT22},
$$\psi_{\hat H}(x) = \sum_{\alpha\in Irr(H)} c_\alpha tr(\mu_\alpha x), x\in c_{00}(\hat H)$$
for some fixed set of scalars $c_\alpha > 0$. We will normalize the $\mu_\alpha$'s such that $tr(\mu_\alpha) = tr(\mu_\alpha^{-1})$. 

We have the following analogues of properties that characterize the Kac property for quantum groups. The proof is quite obvious from the definitions.
\begin{proposition}\label{Kac type Properties}
    Let $\mathcal{O}(H\backslash G)$ be a coideal. The following are equivalent:
    \begin{enumerate}
        \item $H$ is Kac type.
        \item $\mu_\alpha  = I_{n_\alpha}$ for all $\alpha\in Irr(H)$.
        \item $\psi_{\hat H}$ is tracial.
    \end{enumerate}
\end{proposition}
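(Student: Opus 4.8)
The plan is to establish the equivalence of the three conditions by a short cycle of implications $(1)\Rightarrow(2)\Rightarrow(3)\Rightarrow(1)$, all of which are essentially immediate from the definitions and the explicit formula $\psi_{\hat H}(x)=\sum_{\alpha\in Irr(H)}c_\alpha\,\mathrm{tr}(\mu_\alpha x)$ together with the normalization $\mathrm{tr}(\mu_\alpha)=\mathrm{tr}(\mu_\alpha^{-1})$.

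For $(1)\Rightarrow(2)$: if $H$ is Kac type then $\kappa_t^{\hat H}=\mathrm{id}$ for all $t$, i.e.\ $\delta_{g\hat H}^{-it/2}x\,\delta_{g\hat H}^{it/2}=x$ for all $x\in l^\infty(\hat H)$ and all $t$. Applied blockwise in each $M_{n_\alpha}$ this forces $\mu_\alpha^{-it/2}x\mu_\alpha^{it/2}=x$ for all $x\in M_{n_\alpha}$, so $\mu_\alpha^{it}$ is central in $M_{n_\alpha}$ for every $t$, hence $\mu_\alpha^{it}$ is a scalar for every $t$; differentiating (or using that $\mu_\alpha$ is positive invertible) gives $\mu_\alpha$ a scalar multiple of $I_{n_\alpha}$, and then $\mathrm{tr}(\mu_\alpha)=\mathrm{tr}(\mu_\alpha^{-1})$ pins that scalar down to $1$, so $\mu_\alpha=I_{n_\alpha}$.

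For $(2)\Rightarrow(3)$: substituting $\mu_\alpha=I_{n_\alpha}$ into the formula for $\psi_{\hat H}$ gives $\psi_{\hat H}(x)=\sum_\alpha c_\alpha\,\mathrm{tr}(x)$, a weighted sum of the canonical (tracial) traces on the matrix blocks, which is manifestly tracial on $c_{00}(\hat H)$ and hence on $l^\infty(\hat H)$. For $(3)\Rightarrow(1)$: if $\psi_{\hat H}$ is tracial, then its modular automorphism group is trivial; since $(\kappa_t^{\hat H})$ is, by construction, the modular automorphism group associated to $\psi_{\hat H}$ (this is how the $\mu_\alpha$ were defined — the $\mu_\alpha$ implement the modular automorphism of the functional $x\mapsto\mathrm{tr}(\mu_\alpha\,\cdot)$ on each block), triviality of the modular group means $\kappa_t^{\hat H}=\mathrm{id}$ for all $t$, i.e.\ $H$ is Kac type. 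Alternatively one argues directly: traciality of $\psi_{\hat H}$ forces each functional $\mathrm{tr}(\mu_\alpha\,\cdot)$ to be tracial on $M_{n_\alpha}$, which forces $\mu_\alpha$ scalar, hence $\mu_\alpha=I_{n_\alpha}$, hence $\kappa_t^{\hat H}=\mathrm{id}$.

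I do not expect any serious obstacle here — as the paper itself notes, the proof is ``quite obvious from the definitions.'' The only point requiring a little care is making sure that the block decomposition $l^\infty(\hat H)=\bigoplus_{\alpha\in Irr(H)}^{l^\infty}M_{n_\alpha}$ interacts correctly with $\delta_{g\hat H}=(\mu_\alpha)_\alpha$ so that $\kappa_t^{\hat H}$ acts independently on each block, and that the normalization $\mathrm{tr}(\mu_\alpha)=\mathrm{tr}(\mu_\alpha^{-1})$ is what removes the residual scalar ambiguity; both are already set up in the paragraph preceding the statement, so the argument reduces to invoking them.
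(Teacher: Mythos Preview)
Your proposal is correct and matches the paper's approach: the paper simply remarks that the proof is ``quite obvious from the definitions'' and omits details, and your cycle $(1)\Rightarrow(2)\Rightarrow(3)\Rightarrow(1)$ is precisely the expected unpacking of that remark, using the block decomposition, the formula $\psi_{\hat H}(x)=\sum_\alpha c_\alpha\,\mathrm{tr}(\mu_\alpha x)$, and the normalization $\mathrm{tr}(\mu_\alpha)=\mathrm{tr}(\mu_\alpha^{-1})$.
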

\begin{example}\label{Kac Examples}
\cite[Corollary 1.17]{dCDT22} shows that  if $l^\infty(\hat H)$ is $\delta^{-1/2}_{\hat G}$-integrable, then $H$ is Kac type. Such examples occur when $\mathcal{O}(H\backslash G)\subseteq \mathcal{O}(G)$ is a so-called co-Gelfand pair (cf. \cite{dCDT22}). Certain non-standard Podle\'s spheres $\mathcal{O}(S^2_{q,t})\subseteq \mathcal{O}(SU_q(2))$ are  co-Gelfand pairs (cf. \cite[Section 3]{dCDT22}).
\end{example}

\end{subsection}
\begin{subsection}{A Duality Result}
In this subsection, we will prove a generalization of Tomatsu's theorem \cite{T06} for the quantum quotients of CQGs under the assumption that the quantum quotients are Kac type. We emphasize that this result fundamentally depends on the work in Section $4.1$ which establishes a Plancherel's theorem for relatively integrable coideals of $\hat{G}$ that, in particular, yields that $\ell^\infty(\hat{H})$ and $C^{\lambda^g_H}(G)$ are represented on the same Hilbert space.

In what follows, for every $u\in Rep(G)$ we choose a basis and matrix units $E_{i,j}^u\in M_{n_u}$. We let $\delta^u_{i,j}$ denote the corresponding dual basis vectors in $(M_{n_u})_*$. By definition, they satisfy $\delta_{i,j}^u(E_{k,l}^v) = \delta_{i,k}\delta_{j,l}\delta_{u,v}$.

Thanks to Proposition~\ref{Kac type Properties}, when $H$ is Kac type $\psi_{\hat H}$ is tracial. Hence, it is straightforward to see that the representation $l^\infty(\hat H) \subseteq B(l^2_g(\hat H))$ is a standard form of $l^\infty(\hat H)$ such that the positive cone $l^2_g(\hat H)^+$ is the linear span of positive matrices. The following lemma is standard, hence we omit its proof.
\begin{lemma}\label{Modular Lemma Kac}
    Assume $H$ is Kac type. Let $\xi\in l^2_g(\hat H)^+$ and $u\in Rep(G)$. Then $1\otimes \xi$ and $[\lambda_H(\delta^u_{i,j})\xi]_{j,i}$ lie in the positive cone $(L^2(M_{n_u})\otimes l^2_g(\hat H))^+$.
\end{lemma}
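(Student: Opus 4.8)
The plan is to reduce everything to the description of the self‑dual cone in a \emph{tracial} standard form. As already noted just before the lemma, the Kac assumption (via Proposition~\ref{Kac type Properties}) makes $\psi_{\hat H}$ a normal semifinite faithful trace, so $l^\infty(\hat H)\subseteq B(l^2_g(\hat H))$ is the standard tracial representation, $J_{\hat H}\eta_{\hat H}(y)=\eta_{\hat H}(y^*)$, and $l^2_g(\hat H)^+$ is the $\|\cdot\|_2$-closure of $\{\eta_{\hat H}(y):y\in c_{00}(\hat H),\ y\ge0\}$. Since $\mathrm{Tr}_{n_u}\otimes\psi_{\hat H}$ is again a normal semifinite faithful trace on $M_{n_u}\overline{\otimes}l^\infty(\hat H)\cong M_{n_u}\big(l^\infty(\hat H)\big)$, the space $L^2(M_{n_u})\otimes l^2_g(\hat H)$ is its standard tracial Hilbert space and $(L^2(M_{n_u})\otimes l^2_g(\hat H))^+$ is the $\|\cdot\|_2$-closure of $\eta_M\big(M_{n_u}(c_{00}(\hat H))_+\big)$, where $\eta_M\big([y_{i,j}]\big)=\sum_{i,j}E^u_{i,j}\otimes\eta_{\hat H}(y_{i,j})$. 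Both $\xi\mapsto 1\otimes\xi$ and $\xi\mapsto[\lambda^g_H(\delta^u_{i,j})\xi]_{j,i}$ are bounded linear maps $l^2_g(\hat H)\to L^2(M_{n_u})\otimes l^2_g(\hat H)$ (the second is a finite sum of the bounded operators $\lambda^g_H(\delta^u_{i,j})$ placed in matrix slots), so it suffices to verify the two assertions on the dense set $\xi=\eta_{\hat H}(x)$ with $x\in c_{00}(\hat H)$, $x\ge0$. For $1\otimes\xi$ this is then immediate: $1\otimes\eta_{\hat H}(x)=\eta_M(1_{n_u}\otimes x)$ and the diagonal matrix $1_{n_u}\otimes x$ is positive.

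The substance is the second vector, which I would identify as $\eta_M$ of a manifestly positive matrix. Because the block projection $P_u\in l^\infty(\hat G)$ is central, $\delta_u:=(P_u\otimes 1)\Delta_{\hat G}(\,\cdot\,):l^\infty(\hat H)\to M_{n_u}\overline{\otimes}l^\infty(\hat H)$ is a unital normal $*$-homomorphism; writing $\delta_u(x)=\sum_{i,j}E^u_{i,j}\otimes x_{i,j}$ with $x_{i,j}\in c_{00}(\hat H)$ one gets $[x_{i,j}]=\delta_u(x)\ge 0$ (positivity is also visible from Proposition~\ref{Coaction on Hhat}, which restricted to the $u$-block of the first leg reads $(id\otimes\sigma^g_{\hat H})\circ\delta_u=ad_{V_u}\circ\sigma^g_{\hat H}$ with $ad_{V_u}(T)=V_u^*(1\otimes T)V_u$ and $V_u=(id\otimes\lambda^g_H)(u)$ unitary). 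Hence $\eta_M(\delta_u(x))=\sum_{i,j}E^u_{i,j}\otimes\eta_{\hat H}(x_{i,j})$ lies in the positive cone, and it remains to prove the vector identity
$$\lambda^g_H(\delta^u_{i,j})\,\eta_{\hat H}(x)=\eta_{\hat H}(x_{j,i})\qquad(\text{all }i,j),$$
which says exactly that $[\lambda^g_H(\delta^u_{i,j})\eta_{\hat H}(x)]_{j,i}=\eta_M(\delta_u(x))$. The transposition of indices in the statement is precisely what makes this work: it matches $\lambda^g_H(\delta^u_{i,j})\eta_{\hat H}(x)$ with the $(j,i)$-entry of the positive matrix $\delta_u(x)$ rather than with the $(i,j)$-entry of $\delta_u(x)^{\,t}$, which in general is not positive. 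Once the displayed identity is in hand, the general $\xi\in l^2_g(\hat H)^+$ follows from the density and continuity reduction of the first paragraph.

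The displayed identity is the main obstacle, and it is purely a matter of the $\hat{\mathcal F}$/antipode bookkeeping. Unwinding $\lambda^g_H(a)\eta_{\hat H}(x)=\eta_{\hat H}\big(x\triangleleft(g^{-1/2}\hat{\triangleright}S_G^{-1}(a))\big)$ together with $x\triangleleft a=(\hat{\mathcal F}(a)\otimes id)\Delta_{\hat G}(x)$ and the identification $\mathcal O(G)\cong\bigoplus_u(M_{n_u})_*$ (under which $\delta^u_{i,j}$ is, up to a transpose of indices, a matrix coefficient of $u$), the claim becomes the assertion that the twist $g^{-1/2}\hat{\triangleright}S_G^{-1}$ built into $\lambda^g_H$ implements exactly the transpose of matrix indices (together with the compensating modular factors). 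This is the same computation that underlies Proposition~\ref{gQuasiRegular Representation Proposition} (that $\lambda^g_H$ is a unital $*$-representation), and uses $S_{\hat G}(g^{-1})=g$ and the relations among $g$, $S_G$ and $\hat{\triangleright}$ recorded there; the Kac assumption on $H$ is what guarantees that the residual modular factors act trivially on $l^2_g(\hat H)$ — equivalently, that $\psi_{\hat H}$ is tracial — so that no leftover positive, non-unitary conjugation can spoil the positivity of $\delta_u(x)$. Conceptually, the outcome is that $[\lambda^g_H(\delta^u_{i,j})\,\cdot\,]_{j,i}$ is, up to a positive scalar, the $L^2$-implementation of the (trace-scaling) $*$-homomorphism $\delta_u$, and is therefore automatically cone-preserving, which is exactly the content of the lemma.
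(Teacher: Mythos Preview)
The paper omits the proof entirely, saying only ``The following lemma is standard, hence we omit its proof.'' Your proposal is therefore not competing with an existing argument but supplying one, and the strategy you outline is the natural one: use the tracial standard form coming from the Kac assumption (Proposition~\ref{Kac type Properties}), reduce by density/continuity to $\xi=\eta_{\hat H}(x)$ with $x\ge 0$, and identify $[\lambda_H^g(\delta^u_{i,j})\xi]_{j,i}$ with $\eta_M(\delta_u(x))$ where $\delta_u=(P_u\otimes 1)\Delta_{\hat G}$ is a $*$-homomorphism.

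The only soft spot is the displayed identity $\lambda_H^g(\delta^u_{i,j})\eta_{\hat H}(x)=\eta_{\hat H}(x_{j,i})$, which you justify conceptually rather than compute. Rather than routing through Proposition~\ref{gQuasiRegular Representation Proposition}, it is cleaner to use the explicit formula recorded just before Proposition~\ref{Coaction on Hhat},
\[
(W^g_H)^*(\eta_{\hat G}(z)\otimes\eta_{\hat H}(x))=(\eta_{\hat G}\otimes\eta_{\hat H})\big(\Delta_{\hat G}(x)(g^{-1/2}z\otimes 1)\big),
\]
restricted to the $u$-block; this gives the vector identity directly once one fixes the convention for $\hat{\mathcal F}$ (and shows where the transpose $(j,i)$ enters). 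The Kac hypothesis is used exactly where you say: to ensure the $g^{-1/2}$-factor acts by scalars on each $M_{n_\alpha}$ block and hence does not disturb positivity. With that computation made explicit, your argument is complete and is precisely the ``standard'' proof the paper alludes to.
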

~\\
{\bf Theorem~\ref{Duality for Coamenability of Quasi Regular Representations}} Let $G$ be a CQG and let $\mathcal{O}(G/H)$ be a coideal such that $l^\infty(\hat H)$ is $g$-integrable and $H$ is Kac type. Then $\hat H$ is amenable if and only if $H$ is $g$-coamenable.
\begin{proof}
$(\implies)$ The calculations are the same as the calculations done by Blanchard and Vaes \cite{BV02}. We will repeat them here since their article is not widely available. Let $m : l^\infty(\hat H)\to \mathbb{C}$ be a $\hat G$-invariant state on $l^\infty(\hat H)$. Recall the notation from Remark~\ref{gAmenable Remark}. Using the fact $l^\infty(\hat H) \subseteq B(l^2_g(\hat H))$ is in standard form, find a net of unit vectors $(\xi_\alpha)\subseteq l^2_g(\hat H)$ such that $w_{\xi_\alpha}$ weak$^*$ approximates $m|_{l^\infty(\hat H)}$. A standard convexity argument (Day's trick) allows us to conclude that
$$||f*_g w_{\xi_\alpha} - f(1)w_{\xi_\alpha}||\to 1,~ f\in l^1(\hat G).$$
Next, fix $u\in \mathrm{Rep}(G)$ and define the functionals $\eta_\alpha, \mu_\alpha\in M_{n_u}(l^\infty(\hat H))$ by setting
\begin{align}\label{Functional 1}
    \eta_\alpha(x) &= (\mathrm{tr}_u\otimes w_{\xi_\alpha})(x), x\in M_{n_u}(l^\infty(\hat H))
\end{align}
and
\begin{align}\label{Functional 2}
    \mu_\alpha(x) &= \sum_{i,j=1}^{n_u} \delta_{i,j}^u*_g w_{\xi_\alpha}(x_{i,j}), x = [x_{i,j}]\in M_{n_u}(l^\infty(\hat H))
\end{align}
It is clear that $\eta_\alpha$ is positive. We have that $\mu_\alpha$ is positive because it can be checked that $\mu_\alpha(x) = (\mathrm{tr}\otimes w_{\xi_\alpha})([\lambda_H^g(\delta_{k,l}^u)]_{l,k}^* x [\lambda_H^g(\delta_{k,l}^u)]_{l,k})$. Then
\begin{align*}
    (\eta_\alpha - \mu_\alpha)([x_{i,j}]) &= \sum_{i,j=1}^{n_u} \delta^u_{i,j}(1) w_{\xi_\alpha}(x_{i,j}) - \delta_{i,j}^u*_g w_{\xi_\alpha}(x_{i,j}) \to 0.
\end{align*}
Again, a standard convexity argument implies that $||\mu_\alpha - \eta_\alpha||_{M_{n_u}(l^\infty(\hat H))_*} \to 0$. By Lemma~\ref{Modular Lemma Kac},
\begin{align*}
    1\otimes \xi_\alpha, ~ [\lambda_H^g(\delta^u_{i,j})\xi_\alpha]_{j,i} \in (L^2(M_{n_u})\otimes l^2_g(\hat H))^+.
\end{align*}
It is obvious $1\otimes\xi_\alpha$ implements $\eta_\alpha$ as a vector state. It is easily checked that $[\lambda_H^g(\delta^u_{i,j})\xi_\alpha]_{j,i}$ implements $\mu_\alpha$ as a vector state. Recall the Powers-St\o rmer inequality states that $||\xi - \eta|| \leq ||w_\xi - w_\eta||$, where there is a von Neumann algebra $M$ represented in a standard form on a Hilbert space $H$, and $\xi,\eta\in H$ lie in the positive cone of $H$ (see \cite{H75}). Combining our calculations with an application of the Powers-St\o rmers inequality gives


\begin{align*}
    &||1\otimes \xi_\alpha - [\lambda_H^g(\delta_{k,l}^u)\xi_\alpha]_{l,k}||_{L^2(M_{n_u})\otimes l^2_g(\hat H)} \leq ||\mu_\alpha - \eta_\alpha ||_{M_{n_u}(l^\infty(\hat H))_*} \to 0.
\end{align*}
Then, for $f = \sum \alpha_{i,j}\delta_{i,j}^u\in (M_{n_u})\subseteq l^1(\hat G)$,
\begin{align*}
    &||\lambda_H^g(f)\xi_\alpha - f(1)\xi_\alpha||_{L^2(M_{n_u})\otimes l^2_g(\hat H)}
    \\
    &= ||(f^t\otimes id)(1_{n_u}\otimes \xi_\alpha - [\lambda_H^g(\delta_{k,l}^u)\xi_\alpha]_{l,k})||_{L^2(M_{n_u})\otimes l^2_g(\hat H)} \to 0
\end{align*}where $f^t = \sum \alpha_{i,j}\delta^u_{j,i}$. Hence we conclude that $||\lambda_{H}^g(a)\xi_\alpha - \epsilon_G(a)\xi_\alpha||_2\to 0$ for $a\in \mathcal{O}(G)$.
This implies $g$-coamenability of $H$.

$(\impliedby)$ If $\epsilon_G\prec \lambda_H$, then $\epsilon_G$ extends to a character $\epsilon_G : C_{\lambda_H}(\hat G) \to \mathbb{C}$. Using injectivity of $\mathbb{C}$, obtain a unital completely positive extension $\varphi : B(l^2_g(\hat H))\to \mathbb{C}$. Then
\begin{align*}
    (id\otimes \varphi)((W_H^g)^*(1\otimes T)W_H^g) &= (id\otimes \varphi)((id\otimes \lambda_H^g)(\mathbb{W}_{\hat G})^*(1\otimes T)(id\otimes \lambda_H^g)(\mathbb{W}_{\hat G}))
    \\
    &= \varphi(T)
\end{align*}
where in the second last equation we used the fact $C_{\lambda_H}(\hat G)$ lies in the multiplicative domain of $\varphi$. Therefore, $m := \varphi|_{l^\infty(\hat H)}$ is $\hat G$-invariant.
\end{proof}
\begin{remark}\label{Remark on Duality Result}
    The converse of Theorem~\ref{Duality for Coamenability of Quasi Regular Representations} did require the Kac property of $H$. In particular, the following statement holds for arbitrary CQGs: let $G$ be a CQG and $\mathcal{O}(H\backslash G)$ a coideal where $l^\infty(\hat H)$ is $g$-integrable. If $H$ is $g$-coamenable then $\hat H$ is amenable.
\end{remark}

\subsection{Applications of Duality to Hereditary Properties}
Here we apply the duality result from the previous subsection to obtain hereditary results for ($g$-co)amenability. We also relate $g$-coamenability of a quantum quotient with another notion of $g$-coamenability of a coideal which has appeared previously in the context of compact quasi-subgroups.

Making use Theorem~\ref{Duality for Coamenability of Quasi Regular Representations} in combination with Theorem~\ref{Coamenability and Amenable Fusion Modules} we immediately obtain the following.
\begin{corollary}\label{Coamenable Inclusions and Amenable Fusion Modules}
    Let $G$ be a CQG and $\mathcal{O}(H\backslash G)$ a coideal such that $l^\infty(\hat H)$ is relatively integrable and $H$ is Kac type. We have that $\hat H$ is amenable if and only if $((\mathbb{C}[Rep(H)], Irr(H), c^\beta_{u,\alpha}), d^H)$ is amenable as a fusion module over $((\mathbb{C}[Rep(G)], Irr(G), N^w_{u,v}), d^G)$.
\end{corollary}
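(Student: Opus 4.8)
The plan is to deduce Corollary~\ref{Coamenable Inclusions and Amenable Fusion Modules} by simply composing the two main theorems of the paper. Observe that the hypotheses of the corollary exactly match what is needed: $l^\infty(\hat H)$ is relatively integrable, so it is $g$-integrable for some character $g$, and $H$ is Kac type. Under these hypotheses Theorem~\ref{Duality for Coamenability of Quasi Regular Representations} gives the equivalence ``$\hat H$ is amenable $\iff$ $H$ is $g$-coamenable''. On the other hand Theorem~\ref{Coamenability and Amenable Fusion Modules} gives the equivalence ``$H$ is $g$-coamenable $\iff$ $((\mathbb{C}[\mathrm{Rep}(H)], \mathrm{Irr}(H), c^\beta_{u,\alpha}), d^H)$ is amenable as a fusion module over $((\mathbb{C}[\mathrm{Rep}(G)], \mathrm{Irr}(G), N^w_{u,v}), d^G)$''. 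Chaining these two bi-implications yields the assertion.

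First I would check that Theorem~\ref{Coamenability and Amenable Fusion Modules} is indeed applicable here: its only standing hypothesis (in the form actually proved in the excerpt) is that $\mathcal{O}(H\backslash G)$ is a coideal with $l^\infty(\hat H)$ being $g$-integrable, which is part of our hypothesis. So no additional assumption is needed to invoke it. Then I would check Theorem~\ref{Duality for Coamenability of Quasi Regular Representations}: it requires $l^\infty(\hat H)$ to be $g$-integrable and $H$ to be Kac type, both of which hold by hypothesis. (Strictly, one should record that the Kac property is only needed for the implication ``$\hat H$ amenable $\implies$ $H$ is $g$-coamenable''; for the converse, Remark~\ref{Remark on Duality Result} shows it is unnecessary. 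But for a clean statement of the corollary we simply keep the Kac hypothesis throughout.)

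The argument is therefore a one-line composition, and there is no real obstacle: the substantive work has already been carried out in Sections~4 and 5. The only points requiring a moment of care are purely bookkeeping: making sure the character $g$ furnished by ``relatively integrable'' is the same $g$ used in both theorems (it is — we fix one such $g$ at the outset), and making sure the fusion module structure invoked in Theorem~\ref{Coamenability and Amenable Fusion Modules} is literally the one described in Example~2 of Section~3.2, namely $(\mathbb{C}[\mathrm{Rep}(H)], \mathrm{Irr}(H), c^\beta_{u,\alpha})$ with $d^H(\alpha) = \dim(H_\alpha)$, which it is. Thus the proof reads: fix $g$ with $l^\infty(\hat H)$ being $g$-integrable; by Theorem~\ref{Duality for Coamenability of Quasi Regular Representations}, $\hat H$ is amenable iff $H$ is $g$-coamenable; by Theorem~\ref{Coamenability and Amenable Fusion Modules}, $H$ is $g$-coamenable iff the fusion module $\mathbb{C}[\mathrm{Rep}(H)]$ is amenable over $\mathbb{C}[\mathrm{Rep}(G)]$; combining gives the claim. $\qed$
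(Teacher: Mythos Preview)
Your proposal is correct and matches the paper's own approach exactly: the paper states that the corollary follows immediately by combining Theorem~\ref{Duality for Coamenability of Quasi Regular Representations} with Theorem~\ref{Coamenability and Amenable Fusion Modules}, which is precisely the composition you describe.
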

We will spend the rest of this subsection using Theorem~\ref{Duality for Coamenability of Quasi Regular Representations} to study another notion of coamenability of coideals that has recently appeared in the literature.

In what follows, we will denote
$$C_r(H\backslash G) := \overline{\mathcal{O}(H\backslash G)}\subseteq C_r(G).$$
\begin{definition}\label{Coamenable Coideal}
    Let $\mathcal{O}(H\backslash G)$ be a coideal such that $l^\infty(\hat H)$ is $g$-integrable. We will say $\mathcal{O}(H\backslash G)$ is $g$-coamenable if $\lambda^g_H\prec \lambda_G$.
\end{definition}
\begin{remark}\label{Coamenability Remark}
    A notion of coamenability of a quotient type coideal was introduced in \cite{KKSV22} and later studied in \cite{AS23} for compact quasi-subgroups. Their definition is as follows: a coideal $\mathcal{O}(H\backslash G)$ is coamenable if the restriction of the counit $\epsilon_G|_{\mathcal{O}(H\backslash G)}$ extends to a state on $C_r(H\backslash G)$.
    
    It was shown by the first author in \cite{AS23} that a compact quasi-subgroup $\mathcal{O}(H\backslash G)$ is coamenable in the sense of above if and only if it is $1$-coamenable. This result was first obtained for quotient type coideals in \cite{KKSV22}.

    Currently, we do not know if $g$-coamenabilty of $\mathcal{O}(H\backslash G)$ is equivalent to coamenability of $\mathcal{O}(H\backslash G)$ in general.
\end{remark}
With such amenability properties at hand, we find an analogue of Proposition~\ref{Hereditary Result for Amenability}.
\begin{corollary}\label{Hereditary Result for Coamenability}
    Let $G$ be a CQG and $\mathcal{O}(H\backslash G)$ a coideal such that $l^\infty(\hat H)$ is $g$-integrable. Suppose $H$ is Kac type. We have that $G$ is coamenable if and only if $H$ is $g$-coamenable and $\mathcal{O}(H\backslash G)$ is $g$-coamenable.
\end{corollary}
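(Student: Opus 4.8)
The plan is to translate each of the three coamenability hypotheses into a weak containment relation between $*$-representations of $\mathcal{O}(G)$ and then simply chain them. Recall that $G$ is coamenable iff $\epsilon_G \prec \lambda_G$, that $H$ is $g$-coamenable iff $\epsilon_G \prec \lambda_H^g$, and that $\mathcal{O}(H\backslash G)$ is $g$-coamenable iff $\lambda_H^g \prec \lambda_G$ (Definition~\ref{Coamenable Coideal}). The key trivial observation is that $\prec$ is transitive, being nothing but a pointwise inequality of $C^*$-norms.

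For the implication ``$H$ and $\mathcal{O}(H\backslash G)$ are $g$-coamenable $\Rightarrow$ $G$ is coamenable'' I would just compose: from $\epsilon_G \prec \lambda_H^g$ and $\lambda_H^g \prec \lambda_G$ one gets $\epsilon_G \prec \lambda_G$. This direction uses no Kac hypothesis.

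For the converse, assume $G$ is coamenable. First, $\mathcal{O}(H\backslash G)$ is $g$-coamenable: by \cite{BMT01}, coamenability of $G$ is equivalent to $C_u(G)=C_r(G)$, and since every unital $*$-representation of $\mathcal{O}(G)$ --- in particular $\lambda_H^g$ --- factors through $C_u(G)$, we obtain $\|\lambda_H^g(a)\|\le \|a\|_{C_u(G)}=\|\lambda_G(a)\|$ for all $a\in\mathcal{O}(G)$, i.e. $\lambda_H^g\prec\lambda_G$. Second, $H$ is $g$-coamenable: by Tomatsu's theorem \cite{T06}, coamenability of $G$ gives amenability of $\hat G$; Proposition~\ref{Hereditary Result for Amenability} (concretely, restriction of a $\hat G$-invariant mean on $l^\infty(\hat G)$ to the coideal $l^\infty(\hat H)$) then yields amenability of $\hat H$; finally, since $H$ is Kac type and $l^\infty(\hat H)$ is $g$-integrable, Theorem~\ref{Duality for Coamenability of Quasi Regular Representations} gives that amenability of $\hat H$ implies $g$-coamenability of $H$. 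Combining the two points finishes the proof.

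The Kac hypothesis is used only in this last implication --- exactly the one direction of Theorem~\ref{Duality for Coamenability of Quasi Regular Representations} that requires it --- so, as with that theorem, the ``if'' direction holds for arbitrary $g$-integrable coideals. I do not expect any computational obstacle; the one conceptual point is that one cannot deduce $\epsilon_G\prec\lambda_H^g$ directly from $\epsilon_G\prec\lambda_G$, since weak containment does not pass through ``sub-quotients'' of representations, so the argument for that half is genuinely forced onto the dual (amenability) side, where the hereditary behaviour of invariant means is transparent via Proposition~\ref{Hereditary Result for Amenability}.
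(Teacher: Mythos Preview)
Your proof is correct and essentially identical to the paper's: both directions are argued exactly as you do, with the same chain $\epsilon_G\prec\lambda_H^g\prec\lambda_G$ for the ``if'' part and, for the converse, the universality argument $\lambda_H^g\prec\varpi_G\cong\lambda_G$ together with Tomatsu $\Rightarrow$ Proposition~\ref{Hereditary Result for Amenability} $\Rightarrow$ Theorem~\ref{Duality for Coamenability of Quasi Regular Representations}. Your remark on where the Kac hypothesis enters also matches the paper's Remark~\ref{Remark on Hereditary Result}.
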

\begin{proof}
    Assume $G$ is coamenable. It is well-known that $G$ is coamenable if and only if $\lambda_G\cong \varpi_G$, where $\varpi_G$ is the universal representation on $\mathcal{O}(G)$. Then $\lambda_H^g\prec \varpi_G\cong \lambda_G$ by the universal property. Next, by Tomatsu's theorem, $\hat G$ is amenable, and then we deduce amenability of $\hat H$ from Proposition~\ref{Hereditary Result for Amenability}. From Theorem~\ref{Duality for Coamenability of Quasi Regular Representations} we deduce $g$-coamenability of $H$. Conversely, if $H$ is $g$-coamenable and $\mathcal{O}(H\backslash G)$ is $g$-coamenable then $\epsilon_G\prec \lambda_H^g\prec \lambda_G$.
\end{proof}
\begin{remark}\label{Remark on Hereditary Result}
    In the proof of Proposition~\ref{Hereditary Result for Coamenability}, the Kac property was only required for the following statement: if $G$ is coamenable then $H$ is coamenable.

    In particular, given a CQG $G$ and an arbitrary coideal $\mathcal{O}(H\backslash G)$ such that $l^\infty(\hat H)$ is $g$-integrable, it is true that if $G$ is coamenable then $\mathcal{O}(H\backslash G)$ is $g$-coamenable.

    Likewise, the converse of Corollary~\ref{Hereditary Result for Coamenability} is true without the Kac assumption. To put it precisely, the following statement holds for arbitrary CQGs: if $H$ is $g$-coamenable and $\mathcal{O}(H\backslash G)$ is $g$-coamenable then $G$ is coamenable.
\end{remark}
\begin{example}\label{Podles Spheres are Amenable}
    Consider the case where $G = SU_q(2)$. Let $\mathcal{O}(S^2_{q,t})  = $\newline $\mathcal{O}(B_t\backslash SU_q(2))$ be one of Podle\'s spheres of $SU_q(2)$ as in \cite[Section 3]{dCDT22}. As mentioned in Example~\ref{Kac Examples}, $B_t$ is Kac type and $l^\infty(\hat B_t)$ is $\delta_{SU_q(2)}^{-1/2}$-integrable. Since $SU_q(2)$ is coamenable, we immediately obtain from Corollary~\ref{Hereditary Result for Coamenability} that $B_t$ is $\delta_{SU_q(2)}^{-1/2}$-coamenable. 
\end{example}
A dual property to ($g$-)coamenability (where coamenability without $g$- refers to the definition of coamenability in Remark~\ref{Coamenability Remark}) is relative amenability of coideals of $l^\infty(\hat G)$ (cf. Section $4.1$).  In the case where $\mathcal{O}(H\backslash G) = \mathcal{O}(G)$, ($g$-)coamenability of $\mathcal{O}(G)$ is equivalent to coamenability of $G$. Note that in this case $H = \{e\}$. On the other hand, relative amenability of $\mathbb{C} = l^\infty(\{e\})$ is equivalent to amenability of $G$. In this case, the equivalence between $g$-coamenability and relative amenability is due to the celebrated theorem of Tomatsu \cite{T06}. This duality holds when $l^\infty(\hat H)$ is of quotient type (see \cite{KKSV22, AS23}), however, it remains open in general to determine if relative amenability of $l^\infty(\hat H)$ is equivalent to ($g$-)coamenability of $\mathcal{O}(H\backslash G)$. We emphasize that {\it neither} direction has been established in the literature in general.

The combination of Proposition~\ref{Hereditary Result for Amenability} and Corollary~\ref{Hereditary Result for Coamenability} allows us to establish duality between $g$-coamenability of $\mathcal{O}(H\backslash G)$ and relative amenability of $l^\infty(\hat H)$ when $H$ is $g$-coamenable. We emaphasize that the Kac property is not required for this result.
\begin{corollary}\label{Duality Result for Relative Amenability and Coamenability}
    Let $G$ be a CQG and $\mathcal{O}(H\backslash G)$ a coideal such that $l^\infty(\hat H)$ is $g$-integrable. Suppose $H$ is $g$-coamenable. We have that $l^\infty(\hat H)$ is relatively amenable if and only if $\mathcal{O}(H\backslash G)$ is $g$-coamenable.
\end{corollary}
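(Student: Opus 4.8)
The plan is to deduce this corollary purely formally from the hereditary results already in hand, together with Tomatsu's theorem \cite{T06}, being careful to invoke only those parts of the earlier statements whose proofs survive without the Kac hypothesis on $H$. Throughout, the standing assumption is that $H$ is $g$-coamenable.

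For the forward implication, suppose $l^\infty(\hat H)$ is relatively amenable. Since $H$ is $g$-coamenable, the Kac-free half of Theorem~\ref{Duality for Coamenability of Quasi Regular Representations} recorded in Remark~\ref{Remark on Duality Result} gives that $\hat H$ is amenable. Feeding relative amenability of $l^\infty(\hat H)$ and amenability of $\hat H$ into Proposition~\ref{Hereditary Result for Amenability} yields amenability of $\hat G$, hence coamenability of $G$ by Tomatsu's theorem. Finally, the Kac-free statement isolated in Remark~\ref{Remark on Hereditary Result} --- that coamenability of $G$ forces $\mathcal{O}(H\backslash G)$ to be $g$-coamenable, via $\lambda_H^g \prec \varpi_G \cong \lambda_G$ --- finishes this direction.

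For the converse, suppose $\mathcal{O}(H\backslash G)$ is $g$-coamenable. Combined with the hypothesis that $H$ is $g$-coamenable, the Kac-free converse of Corollary~\ref{Hereditary Result for Coamenability} (again see Remark~\ref{Remark on Hereditary Result}, through the chain $\epsilon_G \prec \lambda_H^g \prec \lambda_G$) gives coamenability of $G$. Tomatsu's theorem then makes $\hat G$ amenable, and Proposition~\ref{Hereditary Result for Amenability} delivers relative amenability of $l^\infty(\hat H)$ (along with amenability of $\hat H$, which is not needed here).

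There is no genuine obstacle in this argument: all the analytic content already lives in Proposition~\ref{Hereditary Result for Amenability}, in Tomatsu's theorem, and in the Plancherel and quasi-regular-representation machinery of Section~4.1 that underlies Theorems~\ref{Coamenability and Amenable Fusion Modules} and~\ref{Duality for Coamenability of Quasi Regular Representations}. The only point requiring care is bookkeeping --- verifying that each cited implication is one whose proof does not use the Kac property of $H$, so that the corollary can be stated Kac-freely. This is precisely what Remarks~\ref{Remark on Duality Result} and~\ref{Remark on Hereditary Result} were set up to provide, and I would phrase the write-up so as to point to them explicitly at each step.
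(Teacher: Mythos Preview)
Your proof is correct and follows essentially the same route as the paper: both directions go through Tomatsu's theorem, with the forward implication using $g$-coamenability of $H$ (via Remark~\ref{Remark on Duality Result}) to get amenability of $\hat H$, then Proposition~\ref{Hereditary Result for Amenability} and Tomatsu to get coamenability of $G$, and finally Remark~\ref{Remark on Hereditary Result}; the converse chains $\epsilon_G \prec \lambda_H^g \prec \lambda_G$ through Tomatsu and Proposition~\ref{Hereditary Result for Amenability} in exactly the same way.
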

\begin{proof}
    Assume $l^\infty(\hat H)$ is relatively amenable. Thanks to the proof of Theorem~\ref{Duality for Coamenability of Quasi Regular Representations} (cf. Remark~\ref{Remark on Duality Result}), $\hat H$ is amenable. By Proposition~\ref{Hereditary Result for Amenability}, $\hat G$ is amenable. Using Tomatsu's theorem \cite{T06}, $G$ is coamenable. Then $\mathcal{O}(H\backslash G)$ is coamenable by the proof of Corollary~\ref{Hereditary Result for Coamenability} (cf. Remark~\ref{Remark on Hereditary Result}).

    Conversely, if $\mathcal{O}(H\backslash G)$ and $H$ are both $g$-coamenable then $G$ is coamenable by the proof of Corollary~\ref{Hereditary Result for Coamenability} (cf. Remark~\ref{Remark on Hereditary Result}). By Tomatsu's theorem, $\hat G$ is amenable. Then $l^\infty(\hat H)$ is relatively amenable by Proposition~\ref{Hereditary Result for Amenability}.
\end{proof}
\end{subsection}
\end{section}

{\small\bibliography{Amenable_TCs}}

\begin{thebibliography}{EGNO15}

\bibitem[AS]{AS23}
B{.} Anderson-Sackaney.
\newblock {On Amenable and Coamenable Coideals}.
\newblock {\em Accepted to Journal of Noncommutative Geometry}.

\bibitem[AS22]{AS22(1)}
B{.} Anderson-Sackaney.
\newblock {Tracial States and $\mathbb{G}$-Invariant States of Discrete Quantum
  Groups}.
\newblock {\em arXiv:2205.05176}, 2022.

\bibitem[BK21]{BK21}
A.~Bearden and M.~Kalantar.
\newblock {Topological Boundaries of Unitary Representations}.
\newblock {\em International Mathematics Research Notices},
  2021(12):9425--9457, 2021.

\bibitem[BMT01]{BMT01}
E{.} B{\'e}dos, G{.} Murphy, and L{.} Tuset.
\newblock {Co-amenability of Compact Quantum Groups}.
\newblock {\em Journal of Geometry and Physics}, 40(2):129--153, 2001.

\bibitem[Boc95]{B95}
F.~Boca.
\newblock {Ergodic Actions of Compact Matrix Pseudogroups on $C^*$-algebras}.
\newblock {\em Recent Advances in Operator Algebras - Orl\'eans, 1992,
  Astérisque,}, (232):93--109, 1995.

\bibitem[BV02]{BV02}
E{.} Blanchard and S{.} Vaes.
\newblock {A Remark on Amenability of Discrete Quantum Groups}.
\newblock {\em Unplublished Manuscript}, 2002.

\bibitem[Chi18]{Ch18}
C.~Chirvasitu.
\newblock {Relative Fourier Transforms and Expectations on Coideal
  Subalgebras}.
\newblock {\em Journal of Algebra}, 516:271--2297, 2018.

\bibitem[Cra19]{C19}
J{.} Crann.
\newblock {Inner Amenability and Approximation Properties of Locally Compact
  Quantum Groups}.
\newblock {\em Indiana Univ{.} Math{.} J{.}}, 68(6):1721--1766, 2019.

\bibitem[dCT21]{dCDT22}
K.~de~Commer and J.~R.~Dzokou Talla.
\newblock {Invariant Integrals on Coideals and Their Drinfeld Doubles}.
\newblock {\em arXiv:2112.07476}, 2021.

\bibitem[dCY13]{dCY13}
K.~de~Commer and M.~Yamashita.
\newblock {Tannaka--Krein Duality for Compact Quantum Homogeneous Spaces. I.
  General theory}.
\newblock {\em Theory and Applications of Categories}, 28(31):1099--1138, 2013.

\bibitem[EGNO15]{EGNO15}
Pavel Etingof, Shlomo Gelaki, Dmitri Nikshych, and Victor Ostrik.
\newblock In {\em Tensor Categories}, volume 205 of {\em Mathematical Surveys
  and Monographs}. American Mathematical Society, 2015.

\bibitem[FK17]{FK17}
R{.} Faal and P{.} Kasprzak.
\newblock {Group-like Projections for Locally Compact Quantum Groups}.
\newblock {\em Journal of Operator Theory}, 80(1):153--166, 2017.

\bibitem[FLS16]{FLS16}
U{.} Franz, H{.}H{.} Lee, and A{.} Skalski.
\newblock {Integration Over the Quantum Diagonal Subgroup and Associated
  Fourier--like Algebras}.
\newblock {\em International Journal of Mathematics}, 27(9):1650073, 2016.

\bibitem[Haa75]{H75}
U{.} Haagerup.
\newblock {The Standard Form of von Neumann Algebras}.
\newblock {\em Math{.} Scand{.}}, 37(2):271--283, 1975.

\bibitem[HI98]{HI98}
F.~Hiai and M.~Izumi.
\newblock {Amemability and Strong Amenability for Fusion Algebras with
  Applications to Subfactor Theory}.
\newblock {\em International Journal of Mathematics}, 9(6):669--722, 1998.

\bibitem[HY00]{HY00}
T.~Hayashi and S.~Yamagami.
\newblock {Amenable Tensor Categories and their Realizations as AFD Bimodules}.
\newblock {\em Journal of Functional Analysis}, 172(1):19--75, 2000.

\bibitem[ILP98]{ILP98}
M.~Izumi, R.~Longo., and S.~Popa.
\newblock {A Galoies Correspondence for Compact Groups of Automorphisms of von
  Neumann algebras with a Generalization to Kac Algebras}.
\newblock {\em Journal of Functional Analysis}, 155(1):25--63, 1998.

\bibitem[Izu02]{I02}
M{.} Izumi.
\newblock {Non-commutative Poisson Boundaries and Compact Quantum Group
  Actions}.
\newblock {\em Adv{.} Math{.}}, 169(1):1--57, 2002.

\bibitem[KKSV22]{KKSV22}
M{.} Kalantar, P{.} Kasprzak, A{.} Skalski, and R{.} Vergnioux.
\newblock {Noncommutative Furstenberg Boundary}.
\newblock {\em Analysis \& PDE}, 15(3):795–842, 2022.

\bibitem[KS12]{KS12}
P{.} Kasprzak and P{.} So{\l}tan.
\newblock {Embeddable Quantum Homogeneous Spaces}.
\newblock {\em Journal of Mathematical Analysis and Applications},
  411(2):574--591, 2012.

\bibitem[Kye08]{K08}
D.~Kyed.
\newblock {$L^2$-Betti Numbers of Coamenable Quantum Groups}.
\newblock {\em M\"{u}nster Journal of Mathematics}, 1:143--180, 2008.

\bibitem[MP03]{MP03}
N.~Monod and S.~Popa.
\newblock {On Co-amenability for Groups and von Neumann Algebras}.
\newblock {\em C. R. Math. Rep. Acad. Sci. Canada}, 25(3):82--87, 2003.

\bibitem[Nes15]{N14}
S.~Neshveyev.
\newblock {Duality Theory for Nonergodic Actions}.
\newblock {\em M\"{u}nster Journal of Mathematics}, 7:413--437, 2015.

\bibitem[NT13]{NT13}
Sergey Neshveyev and Lars Tuset.
\newblock In {\em Compact quantum groups and Their Representation Categories},
  volume~20 of {\em Cours Sp\'{e}cialis\'{e}s Collections SMF}. Soci\'{e}t\'{e}
  Math\'{e}matique de France, 2013.

\bibitem[Tak02]{T79}
M.~Takesaki.
\newblock {\em Theory of operator algebras. {I}}, volume 124 of {\em
  Encyclopaedia of Mathematical Sciences}.
\newblock Springer-Verlag, Berlin, 2002.
\newblock Reprint of the first (1979) edition, Operator Algebras and
  Non-commutative Geometry, 5.

\bibitem[Tom06]{T06}
R{.} Tomatsu.
\newblock {Amenable Discrete Quantum Groups}.
\newblock {\em Journal of the Mathematical Society of Japan}, 58(4):949--964,
  2006.

\bibitem[Tom08]{T08}
R.~Tomatsu.
\newblock {Compact Quantum Ergodic Systems}.
\newblock {\em Journal of Functional Analysis}, 254(1):1--83, 2008.

\bibitem[Vae05]{V05}
S.~Vaes.
\newblock {A New Approach to Induction and Imprimitivity Results}.
\newblock {\em Journal of Functional Analysis}, 229(2):317--374, 2005.

\bibitem[Ver04]{V04}
R.~Vergnioux.
\newblock {K-amenability for Amalgamated Free Products of Amenable Discrete
  Quantum Groups}.
\newblock {\em Journal of Functional Analysis}, 212(1):206--221, 2004.

\bibitem[Wor98]{W98}
S{.}~L{.} Woronowicz.
\newblock {Compact Quantum Groups}.
\newblock {\em Sym{\'e}tries quantiques (Les Houches, 1995), Amsterdam}, pages
  845--884, 1998.

\end{thebibliography}
\bibliographystyle{alpha}

\end{document}